\documentclass[11pt,a4paper,reqno]{amsart}
\usepackage[english]{babel}
\usepackage[applemac]{inputenc}
\usepackage[T1]{fontenc}
\usepackage{palatino}
\usepackage{amsmath}
\usepackage{amssymb}
\usepackage{amsthm}
\usepackage{amsfonts}
\usepackage{graphicx}
\usepackage{color}
\usepackage{mathtools}

\usepackage{caption}
\usepackage{vmargin}

\setmarginsrb{23mm}{20mm}{23mm}{20mm}{10mm}{10mm}{10mm}{10mm}

\usepackage{mathrsfs}

\newtheorem{theorem}{Theorem}[section]
\newtheorem{obs}{Observation}[section]

\newtheorem{cor}[theorem]{Corollary}
\newtheorem{prop}[theorem]{Proposition}

\newtheorem{lem}[theorem]{Lemma}

\theoremstyle{definition}
\newtheorem{defn}{Definition}[section]

\newtheorem{remark}[theorem]{Remark}
\newtheorem{ex}[theorem]{Example}

\def\vint{\mathop{\mathchoice%
          {\setbox0\hbox{$\displaystyle\intop$}\kern 0.22\wd0%
           \vcenter{\hrule width 0.6\wd0}\kern -0.82\wd0}%
          {\setbox0\hbox{$\textstyle\intop$}\kern 0.2\wd0%
           \vcenter{\hrule width 0.6\wd0}\kern -0.8\wd0}%
          {\setbox0\hbox{$\scriptstyle\intop$}\kern 0.2\wd0%
           \vcenter{\hrule width 0.6\wd0}\kern -0.8\wd0}%
          {\setbox0\hbox{$\scriptscriptstyle\intop$}\kern 0.2\wd0%
           \vcenter{\hrule width 0.6\wd0}\kern -0.8\wd0}}%
          \mathopen{}\int}

\renewcommand{\a}{\operatorname{a}}

\newcommand{\ep}{\epsilon}
\newcommand{\Om}{\Omega}
\newcommand{\om}{\omega}
\newcommand{\B}{\mathbb{B}^n}
\newcommand{\BB}{\mathbb{B}^2}

\newcommand{\N}{\mathbb{N}}
\newcommand{\Nm}{\mathcal{N}}

\newcommand{\R}{\mathbb{R}}
\newcommand{\Rn}{{\mathbb R}^n}

\newcommand{\ud}{\mathrm {d}}
\newcommand{\Hp}{\mathcal{H}^p}
\newcommand{\dist}{\operatorname{dist}}
\newcommand{\diam}{\operatorname{diam}}
\newcommand{\Sn}{\mathbb{S}^{n-1}}
\newcommand{\A}{\mathcal{A}}
\newcommand{\Leb}{\mathrm {d}{\mathcal L}^{n}}

\definecolor{blau}{rgb}{0.1,0.0,0.9}
\definecolor{violet}{rgb}{0.54, 0.17, 0.89}
\newcommand{\blue}{\color{blau}}

\newcommand{\kom}[1]{}
\renewcommand{\kom}[1]{{\bf \blue /#1/}}

\newcounter{komcounter}
\numberwithin{komcounter}{section}

\usepackage[colorlinks = true, linkcolor = red,  citecolor=red]{hyperref}

\begin{document}

\title[Hardy spaces and quasiregular mappings: averaged derivatives and the BMO case]{Hardy spaces and quasiregular mappings: averaged derivatives and the $\mathbb{BMO}$ case}
\author[T.\ Adamowicz]{Tomasz Adamowicz}
\address{The Institute of Mathematics, Polish Academy of Sciences \\ ul. \'Sniadeckich 8, 00-656 Warsaw, Poland}
\email{tadamowi@impan.pl}
\author[I. Caama\~{n}o]{Iv\'an Caama\~{n}o{\small$^1$}}
\address{The Institute of Mathematics, Polish Academy of Sciences \\ ul. \'Sniadeckich 8, 00-656 Warsaw, Poland}
\email{icaamanoaldemunde@impan.pl}

\thanks{{\small$^1$}
The research of I. C. is partially supported by grant PID2022-138758NB-I00 (Spain).}
\keywords{averaged derivative, Carleson measure, BMO spaces, elliptic equations, Hardy spaces, Harnack inequality, multiplicity, non-tangential limits, non-tangential maximal function, quasiregular mappings}
\date{\today}
\subjclass[2010]{(Primary) 30C65  (Secondary) 30H10}

\begin{abstract}
 We study the Hardy spaces $\Hp$, $0<p<\infty$ of quasiregular mappings on the unit ball $\B$ in $\Rn$ 
under the appropriate growth and multiplicity conditions. Our focus is on the \emph{averaged derivatives} of maps and their Harnack and quantitative Harnack estimates. The averaged derivatives are employed to study the non-tangential limit functions and non-tangential maximal functions of quasiregular mappings and to characterize $\Hp$ in the case of finite multiplicity of $f$. Moreover, we study relations between quasiregular mappings, averaged derivatives, BMO spaces and Carleson measures on $\B$ and the role of the multiplicity of a map. We also apply our results to the second order elliptic PDEs and $\A$-harmonic equations. 

Our paper extends results by Astala and Koskela~\cite{ak} and Nolder~\cite{no} to the setting of quasiregular maps.
\end{abstract}
	
\maketitle

\tableofcontents

\section{Introduction}

Hardy spaces are one of the key tools in geometric and harmonic analysis and their studies have led to several profound  
results. Originally defined in the setting of complex analytic functions, see Hardy~\cite{ha}, have been only recently systematically studied for quasiconformal mappings in Euclidean spaces beyond the planar case, see Astala--Koskela~\cite{ak}. Recall that quasiconformal mappings are one of the most fruitful generalizations of planar conformal mappings and are studied in various settings including the Euclidean, the Riemannian and the sub-Riemannian settings, as well as, the metric measure spaces. Moreover, recently the studies of Hardy spaces has been initiated also for the quasiregular mappings, see~\cite{ag1, ag2}. On the contrary to quasiconformal mappings, the quasiregular ones need not be injective and, therefore, the interplay between the topology, geometry and analysis lies at the heart of their studies. In particular, one has to control the multiplicity of mappings and their growth in order to ensure the existence of the boundary limits, a problem solved for harmonic and complex analytic functions by the classical Fatou theorem. Nevertheless, a counterpart of the seminal result by Hardy--Littlewood~\cite{hl} characterizing the Hardy spaces in terms of the integrability of the nontangential maximal function and the non-tangential limit map can be proven in the setting of quasiregular maps as well, see~\cite{ag2} and also Theorem~\ref{thm11-ag2} in Section 2 below. 

In this work, we would like to shed a new light on another important tool in the geometric mapping theory, the so-called \emph{averaged derivative of a map}, and apply it to the studies of Hardy spaces. In order to give a wider perspective on our results, let us mention that they correspond to an important line of research in harmonic analysis where geometry of the domain intertwines with the studies of non-tangential maximal functions and the boundary behaviour of functions, see e.g.~\cite{gmt}, \cite{hmm}, \cite{hmmtz}, \cite{ht} to mention only few. In our work we consider the domain, classical for the studies of Hardy spaces, namely the unit ball. However, we hope that in the long run our results can be extended to other more general types of domains, such as Lipschitz or NTA domains.

In order to motivate the studies of the averaged derivative for quasiregular mappings, recall that the derivative of an analytic function in plane is analytic, and so a number of results in the $H^p$-theory for the derivative of an analytic function follow naturally. This is not the case already in the setting of quasiconformal mappings. In particular, the image of a circle under a quasiconformal mapping of the disk can fail to be rectifiable. Hence, in the quasiconformal studies a notion of the \emph{averaged derivative} is introduced in Astala--Gehring~\cite{asge2} for domains in $\R^n$ that generalizes $|Df|$ for conformal mappings $f$ and is subsequently studied in~\cite{ak} in the context of Hardy spaces, see~\cite[Formula (1.5)]{asge1} which in our notation reads:
\begin{equation}\label{AG-af}
a_f(x) := \exp \left(\frac{1}{n}\frac{1}{|2B_x|}\int_{2B_x} \log J_f \Leb\right),
\end{equation}
where $2B_x$ stands for an open hyperbolic ball centered at point $x$ with the radius equal to the distance of $x$ to the boundary of the underlying domain. As a comment aside let us remark, that the notion of the averaged derivative is quite robust and can be studied also beyond the Euclidean setting, for example in the first Heisenberg group, see~\cite[Definition 1.3]{afw}. However, this definition for the not necessarily injective quasiregular mappings is difficult to study even in the planar setting. Indeed, if $f$ is conformal in the plane, then $\a_f=|f'|$ due to the harmonicity of $\log |f'|$, but it is not the case for general analytic functions (which are $1$-quasiregular). Therefore, in the lack of injectivity we define the following counterpart of $a_f$:
\begin{equation}\label{eq:af}
	\a_{f,\lambda}(x):=\left(\frac{1}{|2B_x|}\int_{\lambda B_x}|Df|^n \Leb \right)^\frac{1}{n},\quad x\in \B,\,0<\lambda<2.
\end{equation}
In the special $\lambda=1$ this notion is introduced in Nolder~\cite{no}. Moreover, in order to avoid confusion with the weak Jacoby matrix $Df$, we changed the notation comparing to~\cite{no}, where the author uses $D_f$ instead. If $f$ is quasiconformal, then both notions of $a_f$ coincide, see~\cite[Lemma 2.7]{no}. Nevertheless, in general it trivially holds that
 $$
  \a_{f,\lambda}\leq D_f, \hbox{ for } 0<\lambda<1.
 $$

{\bf Our key general observation in this work is that by studying the family of averaged derivatives parameterized by $\lambda$ instead of a single averaged derivative for $\lambda=1$, as in~\cite{no}, we may obtain a good deal of results corresponding to those in Sections 2, 5 and 7 in~\cite{ak}.} 

\begin{quote}
In particular, we present various pointwise and quantitative Harnack estimates for $\a_{f, \lambda}$ and show how the interplay between different values of $\lambda$ can be utilized in the studies of Hardy spaces and quasiregular mappings, see Lemma~\ref{lem:harnack} and Corollary~\ref{cor:harnack-est} and, moreover, Theorem~\ref{thm:afintegrability}, Lemma~\ref{lem:2.5AK} and Corollary~\ref{cor:lem32}.
\end{quote}

Let us also add that, in fact, several of the results in Sections 3.2-3.3 hold under mild assumptions on maps, such as the local $L^n$-integrability of the Jacoby matrix and, therefore, can potentially be extended beyond the setting of quasiregular mappings, also for more general metric measure spaces.

 Furthermore, our results point to the role of multiplicity of a map, since stronger results can be proven if the multiplicity of a map in a ball is finite, see for instance Corollary~\ref{est-harnack-N}. As presented below, controlling the multiplicity function becomes important in our main results, see the condition~\eqref{cond-m} and the discussion in Section 2 below.  
Another crucial condition is the growth of the mapping, see Section 2 for the discussion on the Miniowitz class~\eqref{est-min} and Theorem~\ref{thm-min}.
%

The following is the first main result of our study.

\begin{theorem}\label{thm:characterizationsnolder}
	Let $f:\B\rightarrow \R^n\backslash\{ 0\}$ be a $K$-quasiregular mapping satisfying the multiplicity condition~\eqref{cond-m} and in the Miniowitz class~\eqref{est-min}. For any $p>0$ consider the following statements:
\begin{itemize}
\item[(1)] The non-tangential limit map $\tilde f\in L^p(\Sn)$.
\item[(2)]  It holds that $\int_{\B}\a_f^p(x)\, (1-|x|)^{p-1}\ud x<\infty$.
%
\item[(3)] The following non-tangential maximal function is $L^p(\Sn)$-integrable:
$$
\Nm\big(\a_f (x)\, (1-|x|)\big) \in L^p(\Sn).
$$
 \end{itemize}
Then, it holds that 
\[
  (1) \Rightarrow (2) \Rightarrow (3).
\]
Moreover, if the multiplicity of $f$ in ball $\B$ is finite $N(f,\B)<\infty$, then it holds that
\[
(1) \Leftrightarrow (2) \Leftrightarrow (3).
\]
\end{theorem}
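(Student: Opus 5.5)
The natural route is to follow Astala--Koskela's argument for quasiconformal maps, replacing $a_f$ by the family $\a_{f,\lambda}$ and using the Harnack estimates of Lemma~\ref{lem:harnack} and Corollary~\ref{cor:harnack-est} to pass between different values of $\lambda$. Here $\a_f$ denotes $\a_{f,\lambda}$ for a fixed $\lambda$; the equivalences should not depend on the choice, up to constants. The basic pointwise relation to exploit comes from $|Df|^n\le K J_f$ and the change of variables formula with multiplicity:
\[
\a_{f,\lambda}(x)^n(1-|x|)^n \lesssim \int_{\lambda B_x} J_f \,\ud y \le \int_{\R^n} N(f,\lambda B_x,y)\,\ud y .
\]
Under the multiplicity condition~\eqref{cond-m} this is bounded by a constant times $|f(\lambda B_x)|\lesssim \diam f(\lambda B_x)^n$. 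So $\a_f(x)(1-|x|)\lesssim \diam f(\lambda B_x)$, and since $f$ omits $0$, this gives $\a_f(x)(1-|x|)\lesssim \sup_{\lambda B_x}|f|$.

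\emph{(1)$\Rightarrow$(2).} By Theorem~\ref{thm11-ag2} (the Hardy--Littlewood type characterization), (1) together with the Miniowitz growth condition~\eqref{est-min} gives $\Nm f\in L^p(\Sn)$, that is, $f\in\Hp$. Write $\B$ as a union of Whitney-type hyperbolic balls $B_x$ and use the pointwise bound above. The sharper form $\a_f(x)(1-|x|)\lesssim \operatorname{osc}_{\lambda B_x} f$ is what makes the integral converge radially. For this I would follow Astala--Koskela, Section 5: use a Littlewood--Paley/area-function estimate, bounding $\int_{\Gamma(\xi)} \a_f^p(1-|x|)^{p-n}\,\ud x$ by $(\Nm f(\xi))^p$. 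That estimate should follow from the quantitative Harnack inequality Corollary~\ref{cor:harnack-est} and the integrability result Theorem~\ref{thm:afintegrability}. Integrating over $\xi\in\Sn$ and applying Fubini turns $\int_{\Sn}\int_{\Gamma(\xi)}\cdots$ into $\int_\B \a_f^p(1-|x|)^{p-1}\,\ud x$, using $|\{\xi: x\in\Gamma(\xi)\}|\simeq(1-|x|)^{n-1}$.

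\emph{(2)$\Rightarrow$(3).} Use the Harnack inequality: for a cone point $x\in\Gamma(\xi)$, $\a_{f,\lambda}(x)^p(1-|x|)^p$ is controlled by the average of $\a_{f,\lambda'}^p(1-|y|)^p$ over $y\in \tfrac12 B_x$, with $\lambda'$ slightly larger (Lemma~\ref{lem:2.5AK}). Hence
\[
\Nm(\a_f(1-|x|))(\xi)^p \lesssim \int_{\Gamma'(\xi)} \a_f^p(y)(1-|y|)^{p-n}\,\ud y
\]
for a wider cone $\Gamma'$. Integrating in $\xi$ and applying Fubini again gives quantity (2).

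\emph{(3)$\Rightarrow$(1) when $N(f,\B)<\infty$.} This is the main obstacle. Integrate the derivative along radii: $|f(r\xi)-f(0)|\le\int_0^r|Df(t\xi)|\,\ud t$. Pointwise $|Df|$ is not controlled by $\a_f$ for non-injective maps, so instead I would use a chain of Whitney balls along the radius and bound $\operatorname{osc}_{B_{x_k}}f$. Finite multiplicity gives $\diam f(B_x)^n\lesssim N(f,\B)\int_{B_x}J_f$, via a quasiregular distortion/Väisälä-type estimate, so $\operatorname{osc}_{B_{x_k}} f\lesssim \a_f(x_k)(1-|x_k|)$. Summing along the chain and using (3) only gives a bound of the form $\Nm f(\xi)\lesssim |f(0)|+\sum_k \a_f(x_k)(1-|x_k|)$, which is not summable from the sup bound alone. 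The fix I would try first follows Astala--Koskela: use a good-$\lambda$ or stopping-time argument comparing $\Nm f$ with $\Nm(\a_f(1-|x|))$ in $L^p$, exploiting the Miniowitz growth and Corollary~\ref{est-harnack-N}. That would give $\Nm f\in L^p$, and then Theorem~\ref{thm11-ag2} yields $\tilde f\in L^p(\Sn)$.
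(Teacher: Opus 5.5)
Your (2)$\Rightarrow$(3) step is essentially the paper's argument, with two corrections. The pointwise Harnack bound you need is Lemma~\ref{lem:harnack} on $\lambda B_x$ with $\lambda<\frac12$, not Lemma~\ref{lem:2.5AK}. And after Fubini you must invoke Theorem~\ref{thm:afintegrability} explicitly to return from $\a_{f,\lambda_2}$ to $\a_f$.

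The genuine gap is in (1)$\Rightarrow$(2). Your bound $\int_{\Gamma(\xi)}\a_f^p(x)(1-|x|)^{p-n}\,\ud x\lesssim(\Nm f(\xi))^p$ is a pointwise ``$S\lesssim N$'' estimate, and it is false under the hypotheses of the theorem. For $n=2$ take $f(z)=(1-z)^{-i}$:
- it is analytic with $e^{-\pi/2}<|f|<e^{\pi/2}$, so it omits $0$ and lies in~\eqref{est-min};
- since $\log(1-z)$ is univalent, its multiplicity on $B_\rho(x,r)$ is $\lesssim 1+\log\frac{1}{1-r}$, so~\eqref{cond-m} holds with any $a\in(0,1)$;
- $|f'(z)|=|f(z)|/|1-z|$, so $\a_f(x)\approx|1-x|^{-1}$.

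For $\xi=e^{i\theta}$ your cone integral is then $\approx\log(1/|\theta|)$, whereas $\Nm f\le e^{\pi/2}$. Nor could Corollary~\ref{cor:harnack-est} and Theorem~\ref{thm:afintegrability} yield such a bound: they only compare $\a_{f,\lambda}$ at different scales and contain nothing linking $\a_f$ to $f$.

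The missing bridge is the paper's. Corollary~\ref{cor:lem32} turns $\int_\B\a_f^p(1-|x|)^{p-1}$ into $\int_\B|Df|^q\a_f^{p-q}(1-|x|)^{p-1}$. If $p<1$, take $q=p$. If $p\ge1$, take $q=1$ and apply the Koebe-type bound of Lemma~\ref{lem:2.3AK}, $\a_f(x)(1-|x|)\lesssim d(f(x),\partial f(\B))\le|f(x)-y|$ for a fixed $y\in\partial f(\B)$. The resulting integrals $\int_\B|Df|^p(1-|x|)^{p-1}$ and $\int_\B|Df|\,|f-y|^{p-1}$ are controlled by the Jones-type Lemma~3.5 and Proposition~4.5 of~\cite{ag2}. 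That is where~\eqref{cond-m} and~\eqref{est-min} really enter.

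In (3)$\Rightarrow$(1) you correctly see that the radial chain diverges, but the good-$\lambda$ argument is only named, and you misplace the role of $N(f,\B)<\infty$. The oscillation bound $|f(x)-f(y)|\lesssim(1-|x|)\a_{f,\eta}(x)$ for $y\in\frac\eta2B_x$ is the Morrey--Gehring estimate of Proposition~\ref{lem:Morrey}, valid for every quasiregular map. Change of variables only gives $\int_{B_x}J_f\le N(f,B_x)\,|f(B_x)|$, which runs the other way.

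A good-$\lambda$ inequality needs a small-measure estimate, and that is Lemma~\ref{lem:shadowmeasure}:
\[
\sigma\big(\{\om\in S(x):|\tilde f(\om)-f(x)|\ge M(1-|x|)\a_{f,\eta}(x)\}\big)\lesssim(\log M)^{1-n}\,\sigma(S(x)).
\]
At $x=0$ it compares two moduli: that of the curves joining $B(0,\eta/4)$ to the exceptional set $E\subset\Sn$, which is $\approx\sigma(E)$, and that of their images, which cross a ring of ratio $M/C$. The comparison uses $\mathrm{Mod}(\Gamma)\lesssim_K N(f,\B)\,\mathrm{Mod}(f\Gamma)$, and this is exactly where finite multiplicity is indispensable. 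The estimate is then moved to general $x$ with $T_x$.

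The stopping-time part replaces your radial chain by a chain of bounded length. For the Whitney points $x_i$ of $\{\Nm f>\lambda\}$, Lemma~\ref{lem:leavingshadow} provides $y_0$ at bounded hyperbolic distance whose shadow leaves this set, so $|f(y_0)|\le\lambda$. Then $O(1)$ Morrey steps give $|f(x_i)|\le\lambda+C\gamma$ as soon as $v:=\Nm(\a_f(x)(1-|x|))\le\gamma$ at one point of $S(x_i)$. This yields
\[
\sigma(\{|\tilde f|>2\lambda\})\le C(\log M)^{1-n}\sigma(\{\Nm f>\lambda\})+\sigma(\{v>\gamma\}).
\]
Absorbing the first term needs $\|\Nm f_t\|_{L^p}\lesssim\|\tilde f_t\|_{L^p}<\infty$ for the dilations $f_t(x)=f(tx)$ (Theorem~\ref{thm11-ag2}), before letting $t\to1$. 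Neither the Miniowitz growth nor Corollary~\ref{est-harnack-N} supplies the small-measure estimate.
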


In the special case of quasiconformal mappings we retrieve~\cite[Theorem 5.1]{ak}, since then $N(f, \B)=1$. Moreover, recall that for quasiconformal mappings $\a_f=D_f$, the Nolder operator, and is equivalent to the Astala--Gehring averaged derivative, see Lemma 2.7 in~\cite{no}. Thus, assertions (2) and (3) in Theorem~\ref{thm:characterizationsnolder} reduce for the quasiconformal mappings to the corresponding assertions (2) and (3) in Theorem 5.1 in~\cite{ak}.

\begin{cor}\label{cor-thm-nolder} Let $f:\B\rightarrow \R^n\backslash\{ 0\}$ be a $K$-quasiregular mapping satisfying \eqref{cond-m} and \eqref{est-min}.  For any $p>0$ consider the following statements:
\begin{itemize}
\item[(1)] $f\in \mathcal{H}^p$.
\item[(2)] $\tilde f\in L^p(\Sn)$.
\item[(3)] $\int_{\B}|f(x)|^{p-1} |Df(x)|\,\ud x <\infty$.
\item[(4)] $\int_{\B}\a_f^p(x)\, (1-|x|)^{p-1}\ud x<\infty$.
\item[(5)] $\Nm\big(\a_f (x)\, (1-|x|)\big) \in L^p(\Sn)$.
%
\end{itemize}
It holds that 
\[
(1) \Leftrightarrow (2) \Leftrightarrow (3)\,\,\hbox {and }\, (2) \Rightarrow (4) \Rightarrow (5).
\]
Moreover, if the multiplicity of $f$ in ball $\B$ is finite $N(f,\B)<\infty$, then it holds that
\[
(1) \Leftrightarrow (2) \Leftrightarrow (3) \Leftrightarrow (4) \Leftrightarrow (5).
\]
\end{cor}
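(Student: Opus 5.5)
The corollary is obtained by combining Theorem~\ref{thm:characterizationsnolder} with the Hardy--Littlewood type characterization of $\Hp$ for quasiregular mappings from~\cite{ag2}, recalled as Theorem~\ref{thm11-ag2} in Section 2. Under \eqref{cond-m} and \eqref{est-min}, that theorem gives $(1)\Leftrightarrow(2)$: $f\in\Hp$ if and only if the non-tangential limit map $\tilde f$ exists a.e.\ and lies in $L^p(\Sn)$. It also shows that this is equivalent to $\Nm f\in L^p(\Sn)$, a fact I use below. The implications $(2)\Rightarrow(4)\Rightarrow(5)$ are exactly $(1)\Rightarrow(2)\Rightarrow(3)$ of Theorem~\ref{thm:characterizationsnolder}. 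Likewise, under $N(f,\B)<\infty$, the equivalences $(2)\Leftrightarrow(4)\Leftrightarrow(5)$ are the second part of that theorem. So the only new ingredient is $(3)\Leftrightarrow(1)$, or equivalently $(3)\Leftrightarrow(2)$.

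For $(3)\Leftrightarrow(1)$ I would follow the classical argument for conformal maps, as in Astala--Koskela~\cite{ak}. If $\int_{\B}|f|^{p-1}|Df|<\infty$, integrate along radii. For a.e.\ $\xi\in\Sn$ the map $f$ is absolutely continuous on the segment $[0,\xi)$, and
\[
|f(r\xi)|^p\le |f(0)|^p + p\int_0^r |f(t\xi)|^{p-1}|Df(t\xi)|\,dt .
\]
Integrate over $\xi$ and absorb the Jacobian factor $t^{n-1}$. On $|x|\ge 1/2$ it is bounded below. On $|x|<1/2$, $f$ is bounded because it is continuous on the compact set $\overline{\frac12\B}$, and $|Df|$ is integrable there. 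This bounds $\sup_r\int_{\Sn}|f(r\xi)|^p$, i.e.\ $f\in\Hp$. In fact it bounds the radial maximal function, and hence $\tilde f$ via radial limits, which agree with the non-tangential ones. For $f\in\Hp\Rightarrow(3)$, split $\B$ into Whitney-type hyperbolic balls $B_x$. On each ball, combine two estimates:
\begin{itemize}
\item the Caccioppoli-type estimate for quasiregular maps, $\int_{B_x}|Df|^n\lesssim (1-|x|)^{-n}\int_{2B_x}|f|^n$, or more precisely the version for $|f|^{p}$ using that $\log|f|$ is $\A$-harmonic type, since $f\neq0$;
\item Harnack's inequality for $|f|$ on $2B_x$, available because $f$ omits $0$.
\end{itemize}
Together these give
\[
\int_{B_x}|f|^{p-1}|Df|\lesssim |f(x)|^{p}(1-|x|)^{n-1}.
\]
Summing over the Whitney decomposition, the right-hand side is comparable to $\int_{\Sn}(\Nm f)^p$ integrated in the radial direction over dyadic layers, i.e.\ $\int_0^1\int_{\Sn}|f(r\xi)|^p\,d\xi\,dr$ up to the maximal function. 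This is finite by Theorem~\ref{thm11-ag2}. The bounded overlap of the $2B_x$ is controlled by the multiplicity condition \eqref{cond-m} only through Theorem~\ref{thm11-ag2}, so no new counting is needed.

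The main obstacle is the step $(1)\Rightarrow(3)$: the local estimate $\int_{B_x}|f|^{p-1}|Df|\lesssim|f(x)|^p(1-|x|)^{n-1}$ for arbitrary $p>0$, including $p<1$. I would derive it with Hölder's inequality,
\[
\int_{B_x}|f|^{p-1}|Df|\le |B_x|^{1-1/n}\Bigl(\int_{B_x}|f|^{n(p-1)}|Df|^n\Bigr)^{1/n},
\]
and then use Harnack to replace $|f|^{n(p-1)}$ by $|f(x)|^{n(p-1)}$. The remaining factor $\int_{B_x}|Df|^n\lesssim\int_{2B_x}J_f$ is bounded via the standard Caccioppoli inequality $\int_{B_x}|Df|^n\lesssim (1-|x|)^{-n}\int_{2B_x}|f-c|^n$ with $c=0$, followed by Harnack again. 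The Harnack step requires $f$ to omit $0$, which holds by hypothesis.
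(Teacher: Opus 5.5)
Your reduction matches the paper's. $(1)\Leftrightarrow(2)$ is Theorem~\ref{thm11-ag2}, and $(2)\Rightarrow(4)\Rightarrow(5)$ together with the finite-multiplicity equivalences come from Theorem~\ref{thm:characterizationsnolder}; for condition (3) the paper simply cites Lemma 4.8 of~\cite{ag2}. Your radial-integration proof of $(3)\Rightarrow(1)$ is fine. It is cleaner to integrate from $|x|=\frac12$ rather than from the origin, so the weight $t^{1-n}$ never appears and you only need $\max_{|y|=1/2}|f(y)|$.

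The gap is in $(1)\Rightarrow(3)$, and it lies in the summation, not in the local estimate you identify as the obstacle. The local bound $\int_{B_x}|f|^{p-1}|Df|\lesssim|f(x)|^p(1-|x|)^{n-1}$ is true. Note, however, that the Harnack inequality for $|f|$ comes from \eqref{est-min} on $\lambda B_x$ with $\lambda<2$; it does not follow from $f\neq0$ alone, and it fails on $2B_x$. The real problem is that this bound is just the averaged form of the trivial size estimate $|Df|\lesssim|f|/(1-|x|)$, and its sum over a Whitney decomposition diverges. The layer $1-|x|\approx2^{-k}$ contains about $2^{k(n-1)}$ balls, so it contributes roughly $\int_{\Sn}|f((1-2^{-k})\xi)|^p\,\ud\sigma(\xi)$. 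The total is therefore comparable to $\int_{\B}|f(y)|^p\,\frac{\ud y}{1-|y|}$; you dropped the factor $(1-|y|)^{-1}$. This is infinite whenever $|f|$ is bounded below, e.g. for $f(x)=x+x_0$ with $|x_0|=2$, which lies in the class and satisfies (3) trivially.

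What is missing is a global bound on the logarithmic derivative, and this is exactly where \eqref{cond-m} with $a<n-1$ enters. So, contrary to your remark, it is not used only through Theorem~\ref{thm11-ag2}. The Jones-type Lemma 3.5 of~\cite{ag2} gives $\int_{\B}|Dg|/|g|\,\ud x\le C$ uniformly for $g=f\circ T_{x_0}^{-1}$, with the uniformity coming from \eqref{est-min}. After the change of variables this reads $\sup_{x_0}\int_{\B}\frac{|Df|}{|f|}\,|DT_{x_0}|^{n-1}\,\ud x<\infty$, so $\frac{|Df|}{|f|}\,\ud x$ is a Carleson measure by Lemma~\ref{lem-gar33}. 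The Carleson embedding then yields
\[
\int_{\B}|f|^{p-1}|Df|\,\ud x=\int_{\B}|f|^{p}\,\frac{|Df|}{|f|}\,\ud x\lesssim\int_{\Sn}(\Nm f)^p\,\ud\sigma,
\]
which is finite by Theorem~\ref{thm11-ag2}. Without an ingredient of this kind, your argument does not establish $(1)\Rightarrow(3)$.
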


We prove both Theorem~\ref{thm:characterizationsnolder} and Corollary~\ref{cor-thm-nolder} at the end of Section~\ref{sect35}. The proof of Theorem~\ref{thm:characterizationsnolder} involves the full range of tools introduced and proven in Sections 3.2-2.5:
\begin{itemize} 
\item the Harnack estimates for the averaged derivatives $\a_{f,\lambda}$, see Section 3.2;
\item the reverse quantitative Harnack (Theorem~\ref{thm:afintegrability} in Section 3.3) and the quantitative Harnack estimates (Lemma~\ref{lem:2.5AK} and Corollary~\ref{cor:lem32} in Section 3.4)
\item Proposition~\ref{prop:Lp-norm-bound} - an analog of the Lusin area integral estimate for quasiregular mappings, see Section~\ref{sect35}. 
\end{itemize}

One of the interesting consequences of Theorem~\ref{thm:characterizationsnolder} is a counterpart of the Riesz conjugate theorem for quasiregular mappings presented in Corollary~\ref{cor-Riesz}, see Section 3.6.

The second main result of the manuscript is Theorem~\ref{thm72}, see Theorem 7.2 in~\cite{ak} for the quasiconformal case.

\begin{theorem}\label{thm72}
  Let $n \geq 2$ and $f:\B\to \Rn \setminus \{0\}$ be a $K$-quasiregular mapping in class~\eqref{est-min} satisfying the multiplicity condition~\eqref{cond-m} with some $0\leq a<n-1$. Let us consider the following assertions:
  \begin{itemize}
   \item[(1)] The non-tangential limit map $\tilde{f}\in BMO(\Sn)$.
   \item[(2)] $\sup_{T_{x_0}} \int_{\Sn} |\tilde{f}\circ T_{x_0}- \tilde{f}(T_{x_0}(0))|\,\ud\sigma<\infty$, where the supremum is taken over all M\"obius automorphisms $T_{x_0}$ of $\B$, for $x_0\in \B$.
   \item[(3)] The following measure 
   \[
   \ud \mu= |Df(x)|^{n}(1-|x|)^{n-1+a}\,\ud x
   \] is the Carleson measure on $\B$.
   \item[(4)] It holds that 
   \[
   a_f(x)(1-|x|)^{1+\frac{a}{n}}\leq C\quad \hbox{ for all }x\in \B,
   \]
   where $C$ depends on $n, K$ and $a, m$. 
   \item[(5)] Each coordinate function $f^j$ of $f=(f^1,\ldots, f^n)$ is in BMO($\B$).
   \item[(6)] $f\in BMO(\B)$.
  \end{itemize}
  Then it holds that
  \[
   (1) \Rightarrow (2) \Rightarrow (3) \Rightarrow (4).
  \]
  Moreover, if the multiplicity of $f$ in ball $\B$ is finite, i.e., $N(f,\B)<\infty$, then $a=0$ and also $(4)\Rightarrow (1)$ and $(4)\Rightarrow (5)$, and so it holds that the following conditions are equivalent:
\[
(1) \Leftrightarrow (2) \Leftrightarrow (3) \Leftrightarrow (4) \Leftrightarrow (5) \Leftrightarrow (6).
\]
  \end{theorem}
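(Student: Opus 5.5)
We follow the scheme of \cite[Theorem 7.2]{ak}, replacing the quasiconformal tools by their quasiregular counterparts from Sections 2--3. The chain $(1)\Rightarrow(2)\Rightarrow(3)\Rightarrow(4)$ goes as follows.

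\textbf{Step $(1)\Rightarrow(2)$.} Conformal invariance of BMO on the sphere under M\"obius automorphisms gives $\|\tilde f\circ T_{x_0}\|_{BMO(\Sn)}\leq C(n)\|\tilde f\|_{BMO(\Sn)}$. The Poisson extension of $\tilde f\circ T_{x_0}$ at the origin is controlled by the value at $T_{x_0}(0)$. Concretely, we use that $\tilde f\circ T_{x_0}=\widetilde{f\circ T_{x_0}}$, and that $f\circ T_{x_0}$ is again $K$-quasiregular, satisfies \eqref{cond-m}, and lies in \eqref{est-min} with uniform constants. The quantity $\int_{\Sn}|\tilde f\circ T_{x_0}-\tilde f(T_{x_0}(0))|\,\ud\sigma$ is then bounded by the BMO norm plus $|f(x_0)-\text{Poisson average}|$, and the latter is controlled via the Hardy-space characterization (Theorem~\ref{thm11-ag2}).

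\textbf{Step $(2)\Rightarrow(3)$.} Fix a Carleson box $S(\xi,r)$ and pick $x_0=(1-r)\xi$. We apply Corollary~\ref{cor-thm-nolder}, $(2)\Rightarrow(3)$, in its quantitative form, i.e.\ the Lusin-type area estimate of Proposition~\ref{prop:Lp-norm-bound} with $p=1$, to $g=f\circ T_{x_0}-f(x_0)$. This gives
\[
\int_{\B}|Dg|^n(1-|y|)^{n-1+a}\,\ud y\leq C\int_{\Sn}|\tilde g|\,\ud\sigma .
\]
The factor $(1-|y|)^{a}$ accounts for the multiplicity growth allowed in \eqref{cond-m}, and this is where $a<n-1$ is needed for integrability. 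Changing variables back with $|DT_{x_0}|\approx r^{-1}$ on the relevant region, and using the Jacobian $J_{T}\approx r^{-n}$ together with $(1-|T(y)|)\approx (1-|y|)r^{-1}\cdots$, yields $\mu(S(\xi,r))\leq Cr^{n-1}$.

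\textbf{Step $(3)\Rightarrow(4)$.} We use $\a_f(x)^n\approx |B_x|^{-1}\int_{B_x}|Df|^n$. Since $B_x$ is contained in a box of size $\approx 1-|x|$, on which $1-|y|\approx 1-|x|$, we get
\[
\a_f(x)^n(1-|x|)^{n}(1-|x|)^{n-1+a}\lesssim \mu(S)\lesssim (1-|x|)^{n-1}.
\]
This is exactly $\a_f(x)(1-|x|)^{1+a/n}\leq C$.

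For the finite multiplicity case, $N(f,\B)<\infty$ forces $a=0$ in \eqref{cond-m}. To get $(4)\Rightarrow(5)$, we show that (4) with $a=0$ gives $|Df(x)|\lesssim (1-|x|)^{-1}$ on average over hyperbolic balls. Integrating this along hyperbolic geodesics, using the Harnack estimates of Lemma~\ref{lem:harnack} and Corollary~\ref{est-harnack-N}, yields $|f(x)-f(y)|\leq C(1+k_{\B}(x,y))$, where $k_{\B}$ is the quasihyperbolic distance. Such a bound implies $f\in BMO(\B)$ by the standard John-type argument, since quasihyperbolic balls are well behaved in $\B$. Then $(5)\Leftrightarrow(6)$ is trivial, coordinatewise.

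To get $(4)\Rightarrow(1)$, we use $N(f,\B)<\infty$ to reverse Step $(3)\Rightarrow(4)$. By Corollary~\ref{cor:lem32} and Theorem~\ref{thm:afintegrability}, $|Df|^n$ integrated over Whitney cubes is comparable to $\a_f^n$, which gives a Carleson measure with the right decay. Running the full equivalence of Corollary~\ref{cor-thm-nolder} for the normalized maps $f\circ T_{x_0}-f(x_0)$ then yields uniform bounds on $\int_{\Sn}|\widetilde{f\circ T_{x_0}}-f(x_0)|$. This gives (2), and (2) is equivalent to $\tilde f\in BMO(\Sn)$ by the Garsia-norm characterization.

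The main obstacle is $(2)\Rightarrow(3)$ and its reverse. These require Proposition~\ref{prop:Lp-norm-bound} to hold with constants uniform under M\"obius precomposition, and the multiplicity weight $(1-|x|)^a$ must be tracked through the change of variables.
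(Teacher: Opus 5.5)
\textbf{Gap in $(2)\Rightarrow(3)$.} This step rests on the inequality $\int_{\B}|Dg|^n(1-|y|)^{n-1+a}\,\ud y\le C\int_{\Sn}|\tilde g|\,\ud\sigma$, and nothing you cite gives it.
\begin{itemize}
\item Proposition~\ref{prop:Lp-norm-bound} runs the other way. It bounds $\|\tilde f\|_{L^p}$ \emph{from above} by the non-tangential maximal function of $\a_{f,\eta}(1-|x|)^\beta$. It also needs $N(f,\B)<\infty$, which is not assumed in the chain $(1)\Rightarrow(2)\Rightarrow(3)\Rightarrow(4)$.
\item Corollary~\ref{cor-thm-nolder}, $(2)\Rightarrow(3)$, with $p=1$ only gives $\int_{\B}|Df|<\infty$.
\item The inequality cannot hold with a uniform constant at all: replacing $g$ by $tg$ multiplies the left side by $t^n$ and the right side by $t$. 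A scale-consistent version would need $\|\tilde g\|_{L^n}^n$ on the right.
\item $g=f\circ T_{x_0}-f(x_0)$ vanishes wherever $f\circ T_{x_0}=f(x_0)$. So it lies outside the class of nonvanishing maps in \eqref{est-min} for which the paper's Hardy-space results are stated.
\end{itemize}
The paper argues differently. It uses $BMO(\Sn)\subset L^n(\Sn)$ and Theorem~\ref{thm11-ag2} to bound $f\circ T_{x_0}$ itself, with no constant subtracted, in $\mathcal{H}^n$. It then applies Lemma~\ref{lem94}, a count over dyadic annuli combining the multiplicity bound \eqref{cond-m} with the growth of $\mathcal{H}^n$ maps, to get $\int_{\B}|D(f\circ T_{x_0})|^n(1-|y|)^{n-1+a}\,\ud y\le C$. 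Only then does it pass to Carleson boxes, via H\"older and Lemma~\ref{lem-gar33}. Once such a uniform bound is available, your direct change of variables on $\B\cap B(\xi,r)$ is a legitimate and simpler substitute for Lemma~\ref{lem-gar33}; it even gives $\mu(\B\cap B(\xi,r))\lesssim r^{n-1+a}$. What is missing is the bound itself.

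\textbf{Gap in the finite-multiplicity part.} Reversing $(3)\Rightarrow(4)$ by summing over Whitney cubes fails. With $a=0$, (4) amounts to $\int_Q|Df|^n\lesssim 1$ on each Whitney cube $Q$. Summing the resulting bounds $\ell(Q)^{n-1}$ over the cubes in $\B\cap B(\xi,r)$ gives $\sum_{k\ge 0}2^{k(n-1)}(2^{-k}r)^{n-1}=\infty$, since each generation contributes about $r^{n-1}$. This is the Bloch-versus-BMO gap: lacunary series in the disc satisfy the planar form of (4), yet $|g'|^2(1-|z|)\,\ud A$ is not even a finite measure. Corollary~\ref{cor:lem32} and Theorem~\ref{thm:afintegrability} cannot close this gap, because they do not involve the multiplicity; invoking $N(f,\B)<\infty$ at that point has no effect. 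Finite multiplicity must enter through the modulus estimate of Lemma~\ref{lem:shadowmeasure}, i.e.\ through the good-$\lambda$ argument of Proposition~\ref{prop:Lp-norm-bound}. The paper uses (4) to make $v=\Nm(\a_f(x)(1-|x|))$ bounded and applies Proposition~\ref{prop:Lp-norm-bound} to the M\"obius-normalized maps to reach (2) directly, with no detour through (3). A merely qualitative appeal to Corollary~\ref{cor-thm-nolder}, as in your last sentence, gives no uniformity in $x_0$.

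\textbf{Missing direction.} You prove $(4)\Rightarrow(5)\Leftrightarrow(6)$ but never return from (5) or (6) to (1)--(4), so the six-fold equivalence is not established. The paper closes the loop with $(5)\Rightarrow(4)$. John--Nirenberg gives $\int_{B_x}|f^j-(f^j)_{B_x}|^n\lesssim|B_x|$, and the Caccioppoli inequality for a component of a quasiregular map then gives $\int_{B_x}|Df|^n\le C$, which is (4) with $a=0$.

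Your own $(4)\Rightarrow(5)$ is a valid alternative to the paper's Poincar\'e-plus-Staples argument: chain along quasihyperbolic geodesics, then use that quasihyperbolically Lipschitz functions on $\B$ are in BMO. Note, though, that the oscillation bound on each $\frac12 B_x$ comes from Morrey's inequality (Proposition~\ref{lem:Morrey}), not from the Harnack estimates.
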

 In order to prove Theorem~\ref{thm72} we will need a number of auxiliary observations, which might be of the independent interest to the reader:
\begin{itemize}
\item Lemma~\ref{lem75} which provides a family of the Carleson measures on the unit ball $\B$ for quasiregular mappings satisfying the BMO condition. See also Lemma~\ref{lem94} for the global counterpart of Lemma~\ref{lem75} for the maps in the $n$-th Hardy space.
\item Lemma~\ref{lem-gar33} - a characterization of the Carleson measures on the unit ball $\B$ in terms of its M\"obius automorphisms $T_x$, cf. the corresponding~\cite[Lemma 3.3, Ch VI]{g} for the $2$-dimensional case and~\cite[Theorem 1.2]{adgr} for a counterpart of this lemma in the setting of the Heisenberg group $\mathbb{H}_1$.
\end{itemize}

In order to complement our discussion we obtain a counterpart of Theorem 7.1 in~\cite{ak} and show that the logarithm of the extension $|\tilde{f}|$ belongs to $BMO(\Sn)$, see Theorem~\ref{thm:log-bmo} in Section 4. Moreover, we apply Theorems~\ref{thm:characterizationsnolder} and~\ref{thm72} in Section 6 to show how the estimates for quasiregular maps carry over to estimates for the elliptic PDEs in the plane and for $\A$-harmonic equations. 

For the sake of completeness of the introduction, let us mention that in Section 2 we recall the necessary basic definitions of the quasiregular mappings and the Hardy spaces and the related notions. Moreover, we explain the importance of the multiplicity condition and the Miniowitz class, including examples of quasiregular maps in the class~\eqref{est-min}.

Let us also add that our additional motivation for this work is to extend the corresponding discussion in~\cite{ak} and fills in a number of details not presented in that seminal paper. Moreover, we hope to attract and draw attention of a broader audience of researchers to the studies of the Hardy spaces of mappings.

\section{Preliminaries}
For any $x\in \R^n$ and $r>0$ we denote the ball centered at $x$ with radius $r$ by $B(x,r)$ and define $\lambda B(x,r):=B(x,\lambda r)$ for any $\lambda >0$. Denote $\B=B(0,1), \Sn=\partial \B$ and for $x\in \B$ we define the (Euclidean) hyperbolic ball as follows:
$$
B_x:=B\Big(x, \frac{1}{2}(1-|x|)\Big).
$$
Next, we define the \emph{non-tangential region in $\B$ with parameter $\alpha$ centered at $\omega\in \Sn$} as follows:
\begin{equation}\label{def-cone}
\Gamma_{\alpha}(\omega):=\{x \in \B:\; |x-\om| < (1+\alpha)(1-|x|)\}.
\end{equation}
When $\alpha$ is fixed, we write for simplicity $\Gamma:=\Gamma_{\alpha}$. 

For a point $x\in \B$ we define its \emph{shadow} $S(x)\subset \Sn$ as follows
\[
S(x):=B(x, (1+\alpha)(1-|x|)) \cap \Sn.
\]
By the definition, $\om\in S(x) \Leftrightarrow x\in \Gamma_\alpha(\om)$, and so $S(x)$ denotes the maximal set of points on the unit sphere such that a non-tangential region with apex in $S(x)$ contains point $x$.
\smallskip

\noindent {\bf Quasiregular mappings.}  Let $\Om\subset \Rn$ be a domain. A continuous mapping $f:\Om \rightarrow \mathbb{R}^n$  is called $K$-\emph{quasiregular} for $K\geq 1$, if $f$ belongs to the Sobolev space $W_{loc}^{1,n}(\Om, \Rn)$ and the distortion inequality
\begin{equation*}
  |Df(x)|^n\leq  K J_f (x)
\end{equation*}
  holds for almost every $x\in \Om$. Here $|Df(x)|$ denotes the operator norm of the formal derivative of $f$ at $x\in \Om$ and $J_f(x)$ stands for the Jacobian determinant of $Df$ at $x$.

 If in addition we require $f$ to be a homeomorphism, then we say that $f$ is $K$-\textit{quasi\-con\-formal}. For comprehensive introductions to the topic and further references we refer, for instance, to~\cite{Ric}, Chapter 14 in~\cite{hkm},\cite{resh} for results in $n\geq 2$, and~\cite{aim} for $n=2$. Let us comment that the quasiregular mappings can be defined in various other equivalent ways, e.g. via the modulus of curve families, see e.g.~\cite{va, vuo2}. 
\smallskip
 
\noindent {\bf Hardy spaces of mappings.} We say that a mapping $f:\B \to \Rn$ belongs to \emph{the Hardy space $\mathcal{H}^p$} for a $0<p<\infty $, if
\[
\sup_{0<r<1} \left(\vint_{\Sn} |f (r\om)|^p ~\ud\om\right)^{\frac1p} := \|f\|_{\mathcal{H}^p}< \infty.
\]
The classical Hardy spaces of holomorphic functions on unit disc are due to Hardy and Littlewood \cite{hl}. Their importance in the complex analysis follows, for instance, from the relations to harmonic analysis, the studies of the boundary behaviour of functions, Carleson measures and in the Nevanlinna theory, see e.g.~\cite{D, g}. 
 
The systematic studies of the Hardy spaces of quasiconformal maps in the Euclidean spaces for $n\geq 2$ are presented in~\cite{ak}, see also~\cite{bk, bkl}, \cite{z}, \cite{nol}; see~\cite{af} for the setting of first Heisenberg group $\mathbb{H}_1$. The lack of injectivity for quasiregular maps leads to the necessity of some additional assumptions as discussed in \cite{ag1, ag2}. We recall them now.
\smallskip

\noindent{\bf Multiplicity condition~\eqref{cond-m}.} One of the fundamental problems to handle in our studies is the existence of non-tangential boundary values for quasiregular functions. Even the special case of bounded quasi\-re\-gu\-lar mappings, which in fact belong to any Hardy space, already illustrates the difficulty of the problem and is not yet fully solved, see~\cite{ra08}. Moreover, already in the planar setting, the existence of the radial limits requires additional assumptions, see~\cite{ma, ag1} and the discussion in the introduction to~\cite{ag2}. On the other hand, bounded quasiconformal mappings have boundary limits almost everywhere. Therefore, the control on the multiplicity of the quasiregular mapping turns out to be crucial in several of the known results regarding the existence of boundary values.

Recall that by following the discussion in~\cite{hkm} (see pg. 254), for a subset $E\subset \B$ we denote by $N(y, f, E)$ \emph{the crude multiplicity} of mapping $f$ and define it as follows: $N(y, f, E):=\#\{x\in E: f(x)=y\}$. Similarly, we define 
\emph{the multiplicity function} of $f$:
\[
N(f, E)=\sup_{y\in \R^n} N(y, f, E).
\]
 In~\cite[Theorem 4.1]{kmv}, see also condition (3) in~\cite{Ak}, the authors impose on a quasiregular mapping $f:\B\to \Rn$  the following multiplicity growth condition:
\begin{equation}\label{precond-m}\tag{${\rm M}_0$}
N(f, B(0,r))\leq \frac{C}{(1-r)^a}\quad \hbox{for all } 0<r<1,
\end{equation}
for some constant $C>0$ and an exponent $a\in [0, n-1)$. For bounded quasiregular mappings such growth restriction allows to show the existence of radial limits almost everywhere on the unit sphere, see~\cite{mr}; see also ~\cite[Ch. IV]{vuo2} and \cite[Ch. VII]{Ric} for further results and references.

In order to study the nontangential maximal function for quasiregular mappings we introduce in~\cite{ag2}  a stronger condition on the multiplicity of the function. Namely, we require estimate (\ref{precond-m}) to hold not only for balls  centered at zero, but for any hyperbolic ball in $\B$. To this end, let $T_x$ denote the self-conformal map in $B$, such that $T_x(x)=0$, see~\eqref{def:Tx}, and let $B_\rho (x,r)$ stand for the hyperbolic ball defined by $B_\rho (x,r):= T_x^{-1}(B(0,r))$.   We  refer the reader to  e.g.~\cite[Ch. I.2]{vuo2} for more information on the properties of hyperbolic balls and their relations with Euclidean balls.  

\begin{defn}[Definition 1.1 in~\cite{ag2}]\label{def-multi}
	We say that a quasiregular mapping $f:\B\to \Rn$ satisfies \emph{the multiplicity condition (M)}, if there exist constant $C>0$ and an exponent $a\in [0, n-1)$, such  that
	\begin{equation}\label{cond-m}\tag{M}
	\sup_{x\in \B} N(f, B_\rho(x, r))\leq \frac{C}{(1-r)^a}\quad \hbox{for all } 0<r<1.
	\end{equation}
\end{defn}
In particular, for $x=0$ we retrieve condition~\eqref{precond-m}. Notice that since the M\"obius transformations of a unit ball $\B$ are the isometries of $\B$ considered with respect to the hyperbolic metric, it holds that $T_x^{-1}(B(0,r))$ equals to an Euclidean ball whose center and radius can be computed directly, see (2.22)-(2.24) in~\cite[Ch. I.2]{vuo2}. In a consequence, condition~\eqref{cond-m} reads:
\begin{align}
\sup_{x\in \B} N(f\circ T_x^{-1}, B(0, r)) &=\sup_{x\in \B} N(f, B_\rho(x, r)) \nonumber \\
&=\sup_{x\in \B} N\left(f, B\left(\frac{x(1-r^2)}{1-|x|^2r^2}, \frac{(1-|x|^2)r}{1-|x|^2r^2}\right)\right)\leq \frac{C}{(1-r)^a}. \label{cond-m-tech}
\end{align}
Let us analyze the behaviour of $R$, the radius of the (Euclidean) ball in~\eqref{cond-m-tech}, as the function of $|x|$ and $r$:
\medskip

 $\bullet$\,\, for any fixed $x\in \B$ we have that $R\to 1$, as $r\to 1$ and $R\to 0$, as $r\to 0$, and thus the growth of $N$ is similar in nature to the condition~\eqref{precond-m}; \\
\indent $\bullet$\,\, for a fixed $r$, it holds that $R\to r$, when $|x|\to 0$ and the growth of $N$ is again similar to~\eqref{precond-m}.
\medskip

However, the nature of the condition~\eqref{cond-m} differs from the condition~\eqref{precond-m} for the (Euclidean) hyperbolic balls, as the following observation shows. Recall that Lemma 2.2 in~\cite{ag2} asserts that if $0<\lambda<2(\sqrt{2}-1)\approx 0,83$, then
\[
 \lambda B_x \subset T_x^{-1} B\left(0, \left(\lambda/2\right)^2+\lambda\right):=B_\rho\left(x, \left(\lambda/2\right)^2+\lambda\right).
\]  
Therefore, by the~\eqref{cond-m} condition we have for all $x\in \B$ and $0<\lambda<2(\sqrt 2 -1)$ that
\begin{equation}\label{eq:Mhyperboliclambda}
1\leq N(f, \lambda B_x)\leq \frac{C}{(1-(\lambda/2)^2-\lambda)^{n-1}}, 
\end{equation}
due to $a<n-1$. On the other hand, if $\lambda\in (0,2)$ then for each $x\in\B$ we can cover $\lambda B_x$ by balls $\{\frac{1}{2}B_{x_i}\}_{i=1}^N$, where $N$ only depends on $n$ and $\lambda$, see the covering Lemma~\ref{lem:cover-aux}. Then, by the subadditivity of the multiplicity for finite coverings and \eqref{eq:Mhyperboliclambda} applied to balls $\frac{1}{2}B_{x_i}$, we get
\begin{equation}\label{eq:Mhyperbolic}
	1\leq N(f, \lambda B_x)\leq C(\lambda ,n).
\end{equation}
\begin{remark}
If $\lambda$ approaches $2$, then in the above estimate the constant $C(n,\lambda)$ grows unbounded, since then the cardinality $N$ of the $\lambda B_x$ covering grows. Indeed, the closer to $\partial \B$ the ball $\lambda B_x$ is, the smaller the covering balls become and, thus, $N$ grows large.
\end{remark}

\smallskip

\noindent{\bf Miniowitz class~\eqref{est-min}.} Let us now recall the following estimate by Miniowitz and the related class of mappings, see~\cite{ag2} for more details. Let $E \subset \mathbb{R}^n$ be such that
\begin{equation}\label{cond:omit-set}
E\cap S^{n-1}(r)\not=\emptyset,\quad \hbox{for all } r\geq 0.
\end{equation}
Examples of sets $E$ encompass an unbounded curve starting at the origin and the nonnegative part of the $x_i$-axis in $\Rn$ for any $i=1,\ldots,n$.
\begin{theorem}[Theorem 3 in~\cite{M1}]\label{thm-min}
Let $f:\B\to \Rn\setminus E$ be a $K$-quasiregular mapping for set $E$ as in~\eqref{cond:omit-set}. Then

\begin{equation}\label{est-min}\tag{Min}
\frac{1}{C}\left(\frac{1-r}{1+r}\right)^{m} \leq \frac{|f(y)|}{|f(0)|} \leq C\left(\frac{1+r}{1-r}\right)^{m},\quad r=|y|,
\end{equation}
 where $m:=2^{n-1}K_I$ and $C:=2^{8m}$ are constants depending only on the inner distortion $K_I$ and $n$.
\end{theorem}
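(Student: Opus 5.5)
The approach I would take is the modulus (extremal length) method, following Miniowitz's original argument: compare a curve family in $\B$ with the family of its images, and use the lifting version of V\"ais\"al\"a's inequality, which brings in exactly the inner distortion $K_I$. The starting point is to fix $y\in\B$ with $r=|y|$, let $\gamma=[0,y]$ be the radial segment, and set $a:=\min\{|f(0)|,|f(y)|\}$, $b:=\max\{|f(0)|,|f(y)|\}$. Both inequalities in \eqref{est-min} are then the single statement
\[
\log\frac{b}{a}\le m\log\frac{1+r}{1-r}+8m\log 2 .
\]
So it is enough to bound $\log(b/a)$ from above.

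\emph{Step 1 (the image family and the lower bound).} Let $\Gamma^*$ be the family of curves in $\Rn$ joining the continuum $f(\gamma)$ to the set $E$. The continuum $f(\gamma)$ joins $\{|z|=a\}$ to $\{|z|=b\}$, and $E$ meets every sphere $S^{n-1}(t)$ by \eqref{cond:omit-set}. So inside the shell $\{a<|z|<b\}$, each sphere $S^{n-1}(t)$ contains a point of $f(\gamma)$ and a point of $E$. Applying the spherical-cap (Gehring-type) estimate sphere by sphere, in polar coordinates, should give
\[
M(\Gamma^*)\ge c_n\log\frac{b}{a}.
\]
The constant should be explicit; I expect $c_n=\omega_{n-2}\,2^{1-n}\cdots$ obtained from the $n$-capacity of antipodal-type points on the sphere. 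The target is a constant $c_n$ that will turn into the factor $2^{n-1}$ appearing in $m$.

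\emph{Step 2 (lifting and transfer to $\B$).} For each $\beta\in\Gamma^*$, I take a maximal $f$-lifting starting at a point of $\gamma$. Since $f$ omits $E$, such a lifting can never reach the endpoint in $E$, so it must leave every compact subset of $\B$. Hence the liftings lie in the family $\Gamma$ of curves joining $\gamma$ to $\partial\B$. V\"ais\"al\"a's inequality for liftings (the $K_I$-inequality) then gives $M(\Gamma^*)\le K_I\,M(\Gamma)$.

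\emph{Step 3 (upper bound for $M(\Gamma)$).} $M(\Gamma)$ is the capacity of the Gr\"otzsch-type condenser $(\B,[0,y])$. Using a M\"obius map of $\B$ and the standard estimate for the Gr\"otzsch capacity, I expect
\[
M(\Gamma)\le \omega_{n-1}\Big(\log\frac{1+r}{1-r}+\log\lambda_n\Big)^{1-n},
\]
or, in the form needed here, a bound that is linear in $\log\frac{1+r}{1-r}$ via an inverse-type estimate. Equivalently, I would run the argument on the reversed pair of families, so that the roles yield a linear upper bound.

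Combining Steps 1–3 gives $c_n\log(b/a)\le K_I\big(C_n\log\tfrac{1+r}{1-r}+C_n'\big)$. With the explicit values of the constants this should produce $m=2^{n-1}K_I$ and the additive term $8m\log 2$, that is $C=2^{8m}$.

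The main obstacle I expect is matching the linear-in-$\log$ dependence together with the sharp numerical constants. A capacity bound of the form $(\log)^{1-n}$ has the wrong shape, so the inequality must be oriented correctly: the family with modulus comparable to $\log(b/a)$ has to sit on the side where it can be bounded \emph{linearly} in $\log\frac{1+r}{1-r}$. If a direct attempt fails, I would fall back on a chaining argument along $\gamma$ with hyperbolic steps of fixed size. At each step the modulus inequality gives a Harnack-type bound $|f(x_{k+1})|\le 2^{8m}$-type constant $\times|f(x_k)|$, and the number of steps is $\approx\frac12\log\frac{1+r}{1-r}$; multiplying the steps gives the power $m$. The delicate part of that fallback is tracking the constants so that they come out exactly as $m$ and $2^{8m}$.
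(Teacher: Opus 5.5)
The paper does not prove this statement; it quotes it from Miniowitz. Your argument therefore has to stand on its own.

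Steps 1 and 2 are sound. Both $f(\gamma)$ and $E$ meet every sphere $S^{n-1}(t)$, $a<t<b$, so $M(\Gamma^*)\ge c_n\log(b/a)$. A maximal lifting from a point of $\gamma$ of a path in $\Gamma^*$ cannot be closed, since its endpoint would be mapped into $E$. So V\"ais\"al\"a's inequality gives $M(\Gamma^*)\le K_I\operatorname{cap}(\B,[0,y])$.

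The gap is Step 3. Your displayed bound is false. In fact $\operatorname{cap}(\B,[0,y])=\gamma_n(1/r)\to\infty$ as $r\to1$, whereas your right-hand side tends to $0$. Combined with Steps 1--2 it would force $|f(y)|/|f(0)|\to1$ as $|y|\to1$. That already fails for $z\mapsto(1+z)/(1-z)$, which omits $(-\infty,0]$.

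The $(\log)^{1-n}$ shape is the small-$r$ bound $\gamma_n(1/r)\le\omega_{n-1}(\log\frac1r)^{1-n}$. M\"obius invariance does not let you replace $\log\frac1r$ by the hyperbolic distance $\log\frac{1+r}{1-r}$. Your diagnosis is also off: nothing needs re-orienting, because Steps 1--2 already go the right way. An inequality of the type $M(\Gamma)\le K_O M(f\Gamma)$ would only bound image moduli from below.

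What is missing is an upper bound for the Gr\"otzsch capacity that is linear in $\log\frac{1+r}{1-r}$ with the right slope. Reflection in $\Sn$ gives $\gamma_n(1/r)=2^{n-1}\tau_n(r^{-2}-1)$, where $\tau_n(t)$ is the capacity of the Teichm\"uller ring $\Rn\setminus([-e_1,0]\cup[te_1,\infty))$. For $0<t<1$ one has $\tau_n(t)\le c_n\log\frac1t+B_n$, for the following reason. On $\{t<|x|<1\}$, take the density $\rho_0(x/|x|)/|x|$, where $\rho_0$ is extremal on $\Sn$ for curves joining $\pm e_1$. It is admissible, because the radial projection of a curve joining the two rays joins $\pm e_1$, and its $n$-mass is $c_n\log\frac1t$. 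Curves leaving this annulus form families of bounded modulus. Hence, with $B_n'$ depending only on $n$,
\[
c_n\log\frac ba\le K_I\,\gamma_n(1/r)\le 2^{n-1}K_I\Big(c_n\log\frac{1+r}{1-r}+B_n'\Big).
\]
The constant $c_n$ cancels provided Step 1 uses the same constant, namely the $n$-modulus on $\Sn$ of curves joining two antipodal points (the minimum over all pairs of points).

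This gives $m=2^{n-1}K_I$. The factor $2^{n-1}$ comes from the reflection, not from the value of $c_n$. It also gives $C=e^{mB_n'/c_n}$, so $C=2^{8m}$ still requires the explicit estimate $B_n'\le 8c_n\log 2$, which you do not provide.

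Your fallback chaining argument is valid: the hypotheses are invariant under $f\mapsto f\circ T_x^{-1}$, and a single step only uses the small-$r$ capacity bound. But with hyperbolic step $\delta$ it produces $m=K_I\,\gamma_n(1/\tanh(\delta/2))/(c_n\delta)$. By subadditivity of capacity this is never smaller than $2^{n-1}K_I$.

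So as written the proposal establishes \eqref{est-min} only with some $m=c(n)K_I$ and $C=C(n,K_I)$. That is all the paper's definition of the Miniowitz class actually uses, but it is not the constants in the statement.
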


Moreover, Theorem~\ref{thm-min} has counterparts for quasiregular mappings with bounded multiplicity omitting the origin and spherically mean $1$-valent quasiregular mappings, see~\cite{M1}. Observe that all of these classes of quasiregular functions are invariant by conformal self-maps $T_x$ of the unit ball. This observation motivates the following definition of the class of mappings.

\begin{defn}[Definition 1.2 in~\cite{ag2}] 
	We say that a $K$-quasiregular mapping  $f:\B \to \Rn\setminus\{0\} $  is in the \emph{Miniowitz class}, denoted by ${\rm (Min)}$, if  for any $x\in \B$, the quasiregular map $g_x:=f\circ T_x^{-1}$ satisfies the growth estimate~\eqref{est-min} with some positive constants $C$ and $m$ depending only on $n$ and $K$.
\end{defn}
We also recall if a quasiregular mapping $f$ satisfies condition~\eqref{cond-m} and is in the Miniowitz class~\eqref{est-min}, then $g_x:=f\circ T_x^{-1}$ satisfies the multiplicity condition~\eqref{precond-m}, i.e. the multiplicity condition~\eqref{cond-m} only on hyperbolic balls centered at zero, as well as $g_x$ satisfies the growth estimate~\eqref{est-min}. 

It is worthy noticing that the class of quasiregular mappings in Miniowitz class is rich and enclosses the following examples:
\begin{itemize}
\item[(1)] $f:\B\to \Rn\setminus \{0\}$ with bounded multiplicity,
\item[(2)] $f:\B\to \Rn\setminus E$, for a set $E$ as in~\eqref{cond:omit-set},
\item[(3)] $f:\B\to \Rn$ and is spherically mean $1$-valent mapping, see~\cite{M1}.
\end{itemize}
\smallskip

\noindent{\bf Growth condition.} Following the discussion in~\cite{ag2} we recall the following \emph{growth condition (G)} on a quasiregular mapping $f:\B\to \Rn$ with a constant $C>0$ and exponent $0<\beta<\infty$
\begin{equation}\label{cond-g}\tag{G}
 |f(x)|\leq \frac{C}{(1-|x|)^{\beta}}.
\end{equation}

Observation 3.1 in~\cite{ag2} shows that a quasiregular map in the Hardy space $\Hp$ satisfies the growth condition~\eqref{cond-g} with $\beta=\frac{n-1}{p}$. In the special planar case of $n=2$ and analytic $f$ we retrieve the growth exponent $\frac1p$ as in formula (3.9) in~\cite[Chapter I.3]{g}.
\smallskip

\noindent{\bf Radial limits.} We define the \emph{radial limit function} $\tilde{f}$ of a mapping $f$ as follows. Let $f:\B\to \Rn$, then
\[
 \tilde{f}(\om):=\lim_{r\to 1} f(r\om) \quad\hbox{for }\om\in \Sn,
\]
whenever this limit exists. Theorem 4.1 in~\cite{kmv} asserts that a \emph{$K$-quasiregular map satisfying the growth condition~\eqref{cond-g} with some $\beta>0$ and the multiplicity condition~\eqref{precond-m} with $0\leq a< n-1$ has non-tangential limits at all points in $\Sn$ except possibly of a set $E$ of the Hausdorff dimension $\dim_{H}(E)<\frac{na}{1+a}$.} Notice further, that since existence of the non-tangential limit is independent of choice of a curve nontangentially approaching the given boundary point,  we can reduce our investigations to the radial limits only and justify definition of $\tilde{f}$.
\smallskip

\noindent{\bf Non-tangential maximal functions.} If $f:\B\to \Rn$ is any map, then the \emph{non-tangential maximal function of f} is defined as follows:
\[
\Nm_{\alpha} f (\om):= \sup_{x\in \Gamma_{\alpha} (\om)} |f(x)|,\quad \om\in \Sn,
\]
where $\Gamma_\alpha(\om)$ stands for a non-tangential region in $\B$ centered at $\om$ with the aperture $\alpha$, see~\eqref{def-cone}. If $\alpha$ is fixed we denote $\Nm f (\om):=\Nm_{\alpha} f (\om)$.

We are now in a position to recall Theorem 1.1 in~\cite{ag2} which characterizes quasiregular mappings in Hardy spaces.

\begin{theorem}\label{thm11-ag2}
	Let $n \geq 2$ and $f:\B\to \Rn \setminus \{0\}$ be a $K$-quasiregular mapping in class~\eqref{est-min} satisfying the multiplicity condition~\eqref{cond-m} with $0\leq a<n-1$. Then the following conditions are equivalent for every $p>0$:
\[
  f\in \mathcal{H}^p\, \Leftrightarrow\, \tilde{f}\in L^p(\mathbb{S}^{n-1})\,\Leftrightarrow\, \Nm f\in L^p(\mathbb{S}^{n-1})
\]	
and the norms are equivalent with the equivalence constants depending only on $n, p$ and $a, K, m$.
\end{theorem}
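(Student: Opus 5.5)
The plan is to prove the chain of implications
\[
\Nm f\in L^p \;\Rightarrow\; f\in \mathcal{H}^p \;\Rightarrow\; \tilde f\in L^p(\Sn) \;\Rightarrow\; \Nm f\in L^p,
\]
tracking the constants throughout.

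\textbf{First two implications.} The first is trivial. Since $r\om\in\Gamma_\alpha(\om)$ for all $0<r<1$, we have $|f(r\om)|\le \Nm f(\om)$ pointwise. Hence $\|f\|_{\mathcal H^p}\le \|\Nm f\|_{L^p}$.

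For the second, we first need $\tilde f$ to exist. By Observation 3.1 in~\cite{ag2}, $f\in\mathcal H^p$ gives the growth condition~\eqref{cond-g} with $\beta=(n-1)/p$. Condition~\eqref{cond-m} at $x=0$ gives~\eqref{precond-m}. Theorem 4.1 in~\cite{kmv} then yields non-tangential limits off a set of Hausdorff dimension $<na/(1+a)<n-1$, hence $\sigma$-a.e. Fatou's lemma applied to $|f(r\om)|^p$ as $r\to1$ gives $\|\tilde f\|_{L^p}\le \|f\|_{\mathcal H^p}$.

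\textbf{The main implication $\tilde f\in L^p\Rightarrow \Nm f\in L^p$.} The aim is a pointwise domination of the form
\[
\Nm f(\om)^\delta\le C\, M(|\tilde f|^\delta)(\om)
\]
for a small exponent $0<\delta<p$ depending only on $n,K,m$. Here $M$ is the Hardy--Littlewood maximal operator on $\Sn$. Given this, the maximal theorem in $L^{p/\delta}$ (valid since $p/\delta>1$) gives $\|\Nm f\|_p\le C\|\tilde f\|_p$.

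To obtain the domination, fix $x\in\Gamma_\alpha(\om)$ and set $g_x=f\circ T_x^{-1}$. This map is $K$-quasiregular, omits $0$, lies in~\eqref{est-min} with uniform constants, and satisfies~\eqref{precond-m} with uniform constants. Moreover $|f(x)|=|g_x(0)|$. The key sub-claim is a uniform mean value inequality at the origin,
\[
|g(0)|^\delta\le C\int_{\Sn}|\tilde g|^\delta\,\ud\sigma ,
\]
valid for every $g$ in this class. Granting the sub-claim, change variables by the M\"obius map: the pullback of $\sigma$ under $T_x^{-1}$ is the Poisson kernel measure $P(x,\cdot)\,\ud\sigma$. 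This gives $|f(x)|^\delta\le C\,P[|\tilde f|^\delta](x)$. For $x$ in the cone $\Gamma_\alpha(\om)$, the standard estimate bounds this Poisson integral by $C_\alpha M(|\tilde f|^\delta)(\om)$.

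\textbf{Main obstacle: the sub-claim.} I expect the sub-claim to be the hardest step, and I would try the following first.

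Since $f$ omits $0$, $u=\log|g|$ is $\A$-harmonic for an elliptic operator with structure constants depending on $n,K$. A standard computation should then show that $|g|^\delta=e^{\delta u}$ is an $\A$-subsolution for suitably small $\delta$. Together with the Harnack and Hölder comparisons furnished by~\eqref{est-min} on hyperbolic balls, this should give a sub-mean value inequality
\[
|g(0)|^\delta\le C\vint_{\Sn}|g(r\om)|^\delta\,\ud\om, \qquad r\ge 1/2.
\]
The delicate point is letting $r\to1$ to replace $g(r\om)$ by $\tilde g$, since Fatou goes the wrong way. One route is to show that $\{|g(r\,\cdot)|^\delta\}_r$ is uniformly integrable. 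Another is to compare $|g(r\om)|$ with $|\tilde g(\om)|$ along radii using the multiplicity bound~\eqref{eq:Mhyperbolic}, which controls oscillation of $g$ on Whitney balls via Harnack chains. If this proves too delicate, I would first prove the estimate for $f_s(y)=f(sy)$, $s<1$, where all boundary values are attained continuously, and then let $s\to1$ using monotone convergence on the maximal side.
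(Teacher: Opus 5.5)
Your architecture is sound as far as it goes. The implications $\Nm f\in L^p\Rightarrow f\in\Hp\Rightarrow\tilde f\in L^p(\Sn)$ are fine. A uniform estimate $|g(0)|^\delta\le C\int_{\Sn}|\tilde g|^\delta\,\ud\sigma$ over $\{g_x=f\circ T_x^{-1}\}$, combined with the M\"obius change of variables and the cone estimate, would give $\|\Nm f\|_{L^p}\lesssim\|\tilde f\|_{L^p}$. (For $n\ge3$ the pull-back of $\sigma$ has density $\big(\frac{1-|x|^2}{|x-\eta|^2}\big)^{n-1}$, not the Euclidean Poisson kernel, but it is still dominated by a radially decreasing approximate identity, so the bound by $M$ survives.)

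The genuine gap is the sub-claim. It carries the whole hard direction, and none of your routes proves it:
\begin{itemize}
\item[(i)] That $|g|^\delta$ is a $g^{\#}\A$-subsolution cannot give $v(0)\le C\vint_{\Sn}v(r\om)\,\ud\om$. For operators with merely measurable coefficients, such an inequality for all nonnegative subsolutions would force the associated harmonic measure at $0$ to have bounded density with respect to $\sigma$. That fails in general, already for $n=2$ with the Laplacian pulled back by a quasiconformal self-map of the disc with singular boundary correspondence.
\item[(ii)] Harnack chains along a radius lose a fixed factor on every Whitney ball, so they cannot compare $|g(r\om)|$ with $|\tilde g(\om)|$ uniformly. ``Uniform integrability'' merely restates the problem.
\item[(iii)] The rescaling $f_s$ only yields $\|\Nm f\|_{L^p}\lesssim\sup_s\|f(s\,\cdot)\|_{L^p(\Sn)}=\|f\|_{\Hp}$. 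But $\|f\|_{\Hp}\lesssim\|\tilde f\|_{L^p}$ is exactly what remains to be shown.
\end{itemize}
Your sketch also never uses \eqref{cond-m} with $a<n-1$ substantively, which is where it is indispensable. Finally, you need the sub-claim for arbitrarily small $\delta$: the inequality for $\delta_0$ does not imply it for $\delta<\delta_0$.

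The missing ingredient is the uniform Jones-type bound of \cite{ag2} (Lemma 3.5 there, used in Section 4 of the paper, which only quotes this theorem): $\int_{\B}\frac{|Dg|}{|g|}\,\ud x\le c(n,K,a,m)$ for every $g=f\circ T_x^{-1}$.
\begin{itemize}
\item[(1)] Split $\B$ into dyadic annuli $R_j$. Apply H\"older, the distortion inequality, and the change of variables with $N(g,B(0,1-2^{-j-1}))\lesssim2^{ja}$. Use \eqref{est-min} to place $g(R_j)$ in an annulus where $\int|y|^{-n}\ud y\lesssim j+1$. This gives $\int_{R_j}\frac{|Dg|}{|g|}\lesssim(j+1)^{1/n}2^{-j(n-1-a)/n}$, which is summable precisely because $a<n-1$.
\item[(2)] Hence $F(\om)=\int_0^1|\nabla\log|g|(t\om)|\,t^{n-1}\ud t$ satisfies $\|F\|_{L^1(\Sn)}\le c$. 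By Chebyshev, $F\le L$ on half of $\Sn$ for some $L=L(n,K,a,m)$.
\item[(3)] On that set, $\big|\log|\tilde g(\om)|-\log|g(\om/7)|\big|\le7^{n-1}L$, and \eqref{est-min} gives $|g(\om/7)|\gtrsim|g(0)|$.
\end{itemize}
Therefore $|\tilde g|\ge c\,|g(0)|$ on a set of measure at least $\sigma(\Sn)/2$. This is your sub-claim directly for $\tilde g$, with no limit $r\to1$, and for every $\delta>0$, so you may take $\delta=p/2$. This is the same input as the shadow estimates in \cite{ag2} (Claims 1--2 of Proposition 3.8), reproduced in Section 4. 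With it, your Poisson/maximal-function packaging is a legitimate alternative to their level-set argument.
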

\smallskip

{\bf BMO spaces.} The space of functions with bounded mean oscillations (BMO) plays an important role in the harmonic analysis, for example due to their duality to the real Hardy space $H^1$, relations to the Poisson integral and the Carleson measures, see e.g.~\cite[Chapter VI]{g}, also in the context of the John--Nirenberg lemma, see~\cite[Chapter 18]{hkm}. In the mapping theory the BMO spaces appear in relation to quasiconformal mappings, as the logarithm of the Jacobian determinant of such a map belongs to BMO, also quasiconformal mappings are the so-called BMO maps, see~\cite{rei74, rr} and also~\cite{go, kkms, j} for the definition and properties of the BMO maps.

In our work the BMO spaces will appear in Sections 4 and 5, especially in the context of the boundary BMO spaces, whose definition we now recall, but will not directly appeal to in what follows.

For a given $\om\in\Sn$ and $r>0$ we set $\Delta(\om,r):=B(\om,r)\cap \Sn$, where $B(\om, r)$ is a ball in $\Rn$.  We say that a function $v:\Sn \to \R$ belongs to the space $BMO(\Sn)$ equipped with the surface measure $\sigma$ in $\Sn$, if 
\begin{equation*}
\sup_{\Delta(x,r)}\frac{1}{\sigma(\Delta(x,r))}\int_{\Delta(x,r)}|v(y)-v_{\Delta(x,r)}|\ud \sigma(y)<\infty,
\end{equation*}
where $v_{\Delta(x,r)}$ denotes the mean-value of a function $v:\Sn \to\R$ on $\Delta(x,r)$, i.e. $v_{\Delta(x,r)}:=\vint_{\Delta(x,r)} v(y)\,\ud \sigma(y)$.
\smallskip

\noindent {\bf Conformal automorphisms of the Euclidean unit ball.} In several results below we use the self-conformal maps of a unit ball $\B\subset \R^n$, denoted by $T_x$, such that $T_x(x)=0$ for a fixed $x\in \B$, $x\not=0$. Properties of such maps have been substantially used in~\cite{ak, ag2, z} and are studied in great depth, e.g. in Chapter II in~\cite{ahl}. For the readers convenience we recall their formula: 
\begin{equation}\label{def:Tx}
T_x(y):=\frac{(1-|x|^2)(y-x)-|y-x|^2x}{|x|^2\left|y-\frac{x}{|x|^2}\right|^2},\quad y\in B,\quad x\not=0.
\end{equation}	
\smallskip

\noindent {\bf Covering lemma}. The following observation will be used in some of our key results in Sections 3 and 5 and is a refinement of Proposition 4.1.15 in~\cite{hkst}. 

\begin{lem}[Whitney covering]\label{lem:WhitneyCovering}
 Let $(X,d)$ be a doubling metric space and $\Omega\subset X$ open. Fix $0<\eta <1$, then there exists a covering of $\Omega$ by balls $B_i=B(x_i, \eta d(x_i,X\setminus\Omega ))$, $i\in\N$, such that for each $1\leq \tau <\frac{1}{\eta}$ there exists $C_\tau >0$ so
  $$
  \sum_{i\in\N} \chi_{\tau B_i}\leq C_\tau.
  $$
\end{lem}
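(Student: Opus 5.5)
We adapt the standard Whitney-type construction (as in Proposition 4.1.15 of \cite{hkst}): a maximal separated family of centres, chosen scale by scale, followed by a counting argument that uses only the doubling property. Write $\delta(x):=d(x,X\setminus\Omega)$. For $k\in\mathbb{Z}$ put $\Omega_k:=\{x\in\Omega: 2^{k-1}<\delta(x)\le 2^{k}\}$; if $\delta\equiv\infty$ (i.e.\ $\Omega=X$) the statement is trivial after truncation, so assume $\delta<\infty$. In each $\Omega_k$ we choose a maximal set $\{x_i\}$ of points whose pairwise distances are at least $\eta 2^{k-2}$, and we take the union over $k$ of these families. By maximality, every $x\in\Omega_k$ lies within $\eta 2^{k-2}<\eta\delta(x_i)$ of some chosen $x_i\in\Omega_k$, because $\delta(x_i)>2^{k-1}$. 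Hence the balls $B_i=B(x_i,\eta\delta(x_i))$ cover $\Omega$. If $X$ is not separable one argues via Zorn's lemma, but since doubling spaces are separable the family is countable.

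Next we prove the bounded overlap. Fix $1\le\tau<1/\eta$ and a point $y\in\tau B_i$. Then $d(y,x_i)<\tau\eta\,\delta(x_i)$, and since $\delta$ is $1$-Lipschitz,
\[
(1-\tau\eta)\,\delta(x_i)\le \delta(y)\le (1+\tau\eta)\,\delta(x_i).
\]
This comparability is exactly where the strict inequality $\tau\eta<1$ is used. Consequently every $x_i$ with $y\in\tau B_i$ satisfies $\delta(x_i)\simeq\delta(y)$ with constants depending on $\tau\eta$, which gives two things. First, the relevant indices $k$ lie in a set of cardinality at most $C(\tau\eta)\approx\log_2\frac{1+\tau\eta}{1-\tau\eta}+2$. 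Second, all such $x_i$ lie in the ball $B(y,R)$ with $R=\tau\eta\,\delta(y)/(1-\tau\eta)$.

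The main step is to bound, for a single $k$, the number of $\eta2^{k-2}$-separated points inside a ball of radius $R\lesssim 2^{k}\tau\eta/(1-\tau\eta)$. By the doubling property of the metric space (equivalently, of a doubling measure, applied to the disjoint balls $B(x_i,\eta2^{k-3})$, each of which contains a comparable ball around $y$ after enlargement by the factor $R/(\eta 2^{k-3})$), this number is at most $C_d^{\,\log_2(R/(\eta 2^{k-3}))+c}$. The ratio $R/(\eta2^{k-3})$ is bounded by a constant depending only on $\tau\eta$ (the factor $\eta$ cancels), so the count depends only on the doubling constant and $\tau\eta$. Multiplying by the number of admissible scales $k$ gives $\sum_i\chi_{\tau B_i}(y)\le C_\tau$.

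I expect the main obstacle to be bookkeeping. The separation scale must be chosen proportional to $\eta 2^{k}$ rather than $2^{k}$, so that the balls still cover $\Omega$ while the ratio of radii in the counting step stays independent of $\eta$. One must also make sure the constant blows up only as $\tau\eta\to1$, which is the natural threshold.
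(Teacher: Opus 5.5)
Your argument is correct and is essentially the paper's proof. Both use dyadic shells in $d(\cdot,X\setminus\Omega)$ and a separated selection of centres in each shell; you take a maximal $\eta 2^{k-2}$-separated set, where the paper applies the $5B$-covering lemma to the balls $B(x,\frac{\eta}{5}d(x))$. Both then use the comparability $d(x_i)\simeq d(y)$ coming from $\tau\eta<1$, a doubling count of separated points in a ball, and a bounded number of admissible shells.

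One small correction concerns your counting step. The radius ratio there is at most $8\tau(1+\tau\eta)/(1-\tau\eta)$, so it depends on $\tau$ and not only on $\tau\eta$. This is harmless, since $C_\tau$ is allowed to depend on $\tau$ and $\eta$.
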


We present the proof of the lemma in Appendix~\ref{appA}.

\smallskip

\noindent {\bf Notation convention.} In what follows $A \lesssim B$ means that there is a constant $c>0$ such that $A\leq cB$ and $A\lesssim_kB$ if the constant $c$ depends on $k$. Similarly, $A\approx B$ stands for $A\lesssim B \lesssim A$ and the notation $A\approx_k B$ is defined in an analogous way.

\section{Characterization of Hardy spaces via the averaged derivative}

The main goal of this section is to discuss and prove Theorem~\ref{thm:characterizationsnolder}, a characterization of the $L^p$-integrability of \emph{the non-tangential limit map} of a quasiregular mapping in terms of its \emph{averaged derivative}; see also Corollary~\ref{cor-thm-nolder} for the related characterization of the Hardy spaces. Therefore, we generalize the corresponding result for quasiconformal mappings, see Theorem 5.1 in~\cite{ak}. In order to present a slightly wider perspective on main results of this section, let us recall that they belong to the line of research in harmonic analysis studying relations between the geometry of the domain, the $L^p$-integrability of boundary data for the harmonic Dirichlet problem and the $N\approx S$ estimates for the square function/nontangential maximal function. See, for instance, Section 1 in~\cite{hmm} and Theorem 1.1 in~\cite{az} for the case of Lipschitz domains and domains with uniformly rectifiable boundary, respectively.

In Section 3.1 we discuss some difficulties related to showing the Harnack estimates for the operators $\a_{f,\lambda}$ and the role of the finite multiplicity of mappings in proving such estimates. Then, in Section 3.2 we discuss various pointwise Harnack inequalities for the averaged operators $\a_{f,\lambda}$ without the multiplicity conditions and even without the quasiregularity assumption. Sections 3.3 and 3.4 are devoted to the quantitative (integral) versions of the Harnack estimates for $\a_{f,\lambda}$, see Theorem~\ref{thm:afintegrability}, Lemma ~\ref{lem:2.5AK} and Corollary~\ref{cor:lem32}. Moreover, we also discuss partial Koebe distortion theorems in Lemma~\ref{lem:2.3AK} and in Remark~\ref{rem-lem23-v2}. Finally, Section 3.5 contains the proof of Theorem~\ref{thm:characterizationsnolder}.

\subsection{Harnack inequalities: motivations and the finite multiplicity case} \hfill
\smallskip
 
\noindent Let us recall that one of the basic tools employed throughout the work~\cite{ak} are the following results:
\smallskip

\noindent (1) Lemma 2.1, giving the equivalence between the diameter of the image of a hyperbolic ball under a quasiconformal map $f$ and the distance to the boundary of the target space, and \\
(2) Lemma 2.3, the Koebe distortion theorem for $a_f$ (as defined in~\eqref{AG-af}), which plays an important role in understanding the behavior of $a_f$ and controlling its values near the boundary (see also~\cite[Theorem 1.8]{asge1}). 
\smallskip

An important consequence of these results is the Harnack inequality for $a_f$ when $f$ is quasiconformal, i.e.
\[
a_f(x)\simeq_{n, K} a_f(y),\,\hbox{ for each } x\in \B \hbox{ and } y\in B_x.
\]
For quasiregular maps we get a weaker Harnack estimate, see Lemma~\ref{lem:harnack} below. In fact, the result is obtained under mild assumptions on $f$ in order to emphasize the generality of $\a_f$ and its potential broad applications. However, as we will discuss in Corollary~\ref{est-harnack-N}, a stronger Harnack estimate is available provided that a quasiregular map $f$ has finite multiplicity $N(f,\B)<\infty$.  

Let us briefly argue that the Harnack inequality for more general maps with $N(f,\B)=\infty$ appears to be a more delicate problem. Recall, that the proof of the Koebe distortion theorem in~\cite{asge1} relies on the BMO estimate for $\log J_f$, where $J_f$ denotes the Jacobian of the quasiconformal map, which in turn is related to the Muckenhoupt condition for $J_f$ and, therefore, to the Harnack estimate for the average derivative $a_f$. As observed in Section 2.4(c) in~\cite{bkr} and in Remark 3.6 in~\cite{hk}, a Harnack estimate for quasiregular maps (in particular, in the lack of the injectivity), follows from two facts:

(1) the doubling property of the weighted measure $\ud\mu=J_f\ud\mathcal{L}^n$, and \\
\indent(2) the finite multiplicity assumption on $f$, i.e. $N(f,\B)<\infty$.

Recall that, for instance by~15.5 and 15.7 in~\cite{hkm}, the Muckenhoupt condition $J_f\in A_{\infty}(\B)$ implies the doubling property for $\mu$ on $\B$ (but only for such balls $B\subset \B$ with $B\subset 2B\subset \B$).  Furthermore, by the discussion in~\cite[Remark 3.6(b)]{hk}, the same condition $J_f\in A_\infty(\B)$ also implies the uniform boundedness of the local index of $f$, but the map $g(z)=e^z$ in the plane is a counterexample that the opposite implication need not hold and, moreover, observe that $N(g,\R^2)=\infty$. On the other hand, direct computations imply the Harnack estimate for $\a_g$ in $\B$.

Toward a Koebe estimate in the setting of quasiregular mappings, it turns out that a careful scrutiny of the proof of Theorem 1.5 in~\cite{hk} leads to the following proposition, a counterpart of the formula (3.7) in~\cite{hk}.

\begin{obs} If $f:\B\to \Rn$ is a $K$-quasiregular map with bounded multiplicity $N(f,\B)<\infty$, then the Koebe estimate holds for all $x\in \B$ and $0<\lambda\leq 1$:
\begin{equation}\label{est:qr-koebe}
\lambda^{c(n)} \frac{\diam f(B_x)}{1-|x|} \lesssim \a_{f, \lambda}(x) \lesssim \frac{\diam f(B_x)}{1-|x|}.
\end{equation}
The comparison constants in both inequalities depend on $n, K, N(f,\B)$ and, moreover, $c(n)>1$.
\end{obs}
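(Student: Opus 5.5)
We prove the two inequalities in \eqref{est:qr-koebe} separately. Write $B:=B_x$, so $B$ has radius $r=\tfrac12(1-|x|)$ and $|\lambda B|\approx_n \lambda^n|2B_x|$. Both bounds will come from comparing $\int_{\lambda B}|Df|^n$ with $|f(\lambda B)|$ through the multiplicity, and then comparing $|f(\lambda B)|$ with $(\diam f(B))^n$.

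\emph{Upper bound.} Since $\lambda\le 1$, we have $\lambda B\subset B$. By distortion and the change of variables formula for quasiregular maps (valid since $f$ satisfies Lusin's condition (N)),
\[
\int_{\lambda B}|Df|^n\,\ud x\le K\int_{B}J_f\,\ud x=K\int_{\Rn}N(y,f,B)\,\ud y\le K\,N(f,\B)\,|f(B)|.
\]
Moreover $|f(B)|\lesssim_n (\diam f(B))^n$. Dividing by $|2B_x|\approx (1-|x|)^n$ and taking $n$-th roots gives $\a_{f,\lambda}(x)\lesssim \diam f(B_x)/(1-|x|)$, with constants depending on $n,K,N(f,\B)$.

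\emph{Lower bound.} Here we need the reverse chain
\[
\int_{\lambda B}|Df|^n\ \ge\ \int_{\lambda B}J_f\ \ge\ |f(\lambda B)|\ \gtrsim\ (\diam f(\lambda B))^n\ \gtrsim\ \lambda^{c}(\diam f(B))^n .
\]
\begin{itemize}
\item The second inequality holds because $N(y,f,\lambda B)\ge1$ on $f(\lambda B)$.
\item The third inequality, $|f(\lambda B)|\gtrsim(\diam f(\lambda B))^n$, says that the image of a ball has volume comparable to the $n$-th power of its diameter. For quasiregular maps of bounded multiplicity this follows from the local ``$L=H$'' or quasisymmetry-type property. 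Concretely, $f$ restricted to a ball with $2B\subset\B$ is $\eta$-quasisymmetric in the weak sense, with $\eta$ depending on $n,K,N(f,\B)$; see the argument behind Theorem 1.5 and formula (3.7) in~\cite{hk}, or Rickman's results on the linear dilatation $H(x,f)\le C(n,K,i(x,f))$ together with bounded index. This gives a ball $B(f(x),s)\subset f(\lambda B)$ with $s\approx\diam f(\lambda B)$, and hence the volume bound. Since $B=B_x$ satisfies $2B\subset\B$, we may apply this inside $2B_x$.
\item The last inequality, $\diam f(\lambda B)\gtrsim \lambda^{c}\diam f(B)$, is the distortion of diameters under shrinking of balls. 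I would derive it by iterating a one-step doubling estimate $\diam f(\tfrac12 B')\ge c_0\,\diam f(B')$ for balls $B'\subset B$ with $2B'\subset \B$ concentric with $B$. Such an estimate follows from the same weak quasisymmetry, or from a modulus argument: by Väisälä's inequality, the condenser $(B',\tfrac12 B')$ has modulus bounded above, and it is compared with the image condenser using $K_I$ and the multiplicity $N(f,\B)$. Applying it $\log_2(1/\lambda)$ times gives the factor $c_0^{\log_2(1/\lambda)}=\lambda^{c}$ with $c=\log_2(1/c_0)$. Combined with the $n$-th power, this yields $\lambda^{c(n)}$ with $c(n)>1$ after taking $n$-th roots and absorbing constants.
\end{itemize}
Dividing by $|2B_x|$ and taking $n$-th roots then gives the left-hand inequality of \eqref{est:qr-koebe}.

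\emph{Main obstacle.} The hard step is the weak quasisymmetry (equivalently, the three-point/diameter comparison) for non-injective $f$. This is exactly where $N(f,\B)<\infty$ enters, since it bounds the local index uniformly, and where the scrutiny of~\cite{hk} is needed. I would first try to extract it from the path-lifting and modulus estimates (Väisälä/Poletsky inequality with the factor $N(f,\B)$). A fallback is to use that $J_f$ is then a doubling weight on balls $B$ with $2B\subset\B$, which likewise yields $|f(\lambda B)|\gtrsim\lambda^{c}|f(B)|$ directly.
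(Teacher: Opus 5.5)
Your upper bound is exactly the paper's argument. The lower bound has a genuine gap. Both of its nontrivial steps rest on weak quasisymmetry of $f$ on $B_x$. These steps are the fatness estimate $|f(\lambda B_x)|\gtrsim(\diam f(\lambda B_x))^n$ and the shrinking estimate $\diam f(\lambda B_x)\gtrsim\lambda^{c}\diam f(B_x)$. A nonconstant, non-injective quasiregular map is never weakly quasisymmetric on a ball containing two points $x\neq b$ with $f(x)=f(b)$: the three-point condition would force $f\equiv f(x)$ near $x$, which contradicts discreteness. Concretely, take $f(z)=z^2$, $x=\epsilon$ small and $\lambda=4\epsilon/(1-\epsilon)$. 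Then $\partial(\lambda B_x)=\{|y-\epsilon|=2\epsilon\}$ passes through $-\epsilon$, so $l^*(x,f,2\epsilon)=0$ while $L^*(x,f,2\epsilon)=8\epsilon^2$. Hence no uniform control of $L^*/l^*$ exists at the scale of $B_x$, and Rickman's bound $H(x,f)\le C(n,K,i(x,f))$ only concerns $\limsup_{r\to0}$.

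The modulus sketch does not close this either. Väisälä's inequality for the condenser $(B',\tfrac12 B')$ gives at best a collar estimate, i.e.\ a lower bound on the distance from $f(\tfrac12\overline{B'})$ to the unbounded part of $\Rn\setminus f(B')$ in terms of $\diam f(\tfrac12 B')$. That is the wrong direction for $\diam f(\tfrac12 B')\gtrsim\diam f(B')$. Your doubling fallback does give $|f(\lambda B_x)|\gtrsim\lambda^{c}|f(B_x)|$. You then still need $|f(B_x)|\gtrsim(\diam f(B_x))^n$, and for that only quasisymmetry is offered.

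The missing ingredient is an analytic bound on the oscillation of $f$ by the Jacobian mass, which is what the paper uses. Morrey's inequality with some exponent $p>n$, combined with Gehring's reverse H\"older inequality for $|Df|$, gives for fixed $1<t<2$
\[
\diam f(B_x)\lesssim_{n,K}(1-|x|)\Big(\vint_{tB_x}|Df|^n\Big)^{1/n},\qquad\text{hence}\qquad (\diam f(B_x))^n\lesssim\int_{tB_x}J_f .
\]
Next, $|f(tB_x)|\le C|f(B_x)|$ from~\cite{hk}, together with $N(f,\B)<\infty$ and the change of variables formula, gives the doubling property of $J_f\,\ud x$. Iterating it down to $\lambda B_x$ yields $\int_{\lambda B_x}J_f\gtrsim\lambda^{c}(\diam f(B_x))^n$ directly. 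This route never needs $\diam f(\lambda B_x)$ or any three-point condition.

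One side remark: your exponent $c$ depends on $n,K,N(f,\B)$, not on $n$ alone, and this is unavoidable. For $f(z)=z^m$ on the unit disk one computes $\a_{f,\lambda}(0)=\sqrt m\,(\lambda/2)^m$ and $\diam f(B_0)=2^{1-m}$, which forces $c\ge m=N(f,\B)$. So the ``$c(n)$'' in the statement cannot be taken literally.
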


\begin{proof}
In order to show the claim, we first note that as on pages 259-260 in~\cite{hk} one can show that for $1\leq t<2$ it holds that 
\begin{equation*}
|f(t B_x)|\leq C(n, K, t) |f(B_x)|,
\end{equation*}
and so 
\begin{equation}\label{Jacobian-dbl2}
\int_{tB_x} J_f \leq C(n, K, t) \int_{B_x} J_f,
\end{equation}
due to the bounded multiplicity and the quasiregular change of variables formula. Next, we fix $1<t<2$ and by the Sobolev embedding theorem and the reverse H\"older inequality we get for $y\in B_x$ the following estimate
\begin{align*}
 |f(y)-f(x)|\lesssim_{n, K} \diam B_x \left(\vint_{tB_x} J_f \right)^{\frac1n} &\lesssim_{n,K,N(f,\B), t, \lambda} \frac{2}{\lambda} \diam B_x \left(\vint_{\lambda B_x} J_f \right)^{\frac1n} \\
 &\lesssim_{\lambda, n, K, N(f,\B), t} \frac{1}{\lambda} (1-|x|) \a_{f, \lambda}(x),
\end{align*} 
and so, the left hand-side in estimate~\eqref{est:qr-koebe} follows. In fact, the dependence on $\lambda$ can be found more explicitly, again by computing the appropriate moduli of curve families at the end of the proof of~\cite[Theorem 1.5]{hk}. Namely, it turns out that in~\eqref{est:qr-koebe} we have $\lambda^{c}$ for the constant $c=c(n)>1$.
 
The right-hand side estimate follows immediately from the definition of $\a_{f,\lambda}$ and the quasiregular change of variables formula as follows:
\[
 \a_{f,\lambda}^n(x)\leq \frac{K}{|2B_x|} \int_{f(B_x)} N(f, B_x) \leq \frac{K N(f, B_x)}{|2B_x|}\,\big(\diam f(B_x)\big)^n.
\]
\end{proof}
An important consequence of the observation is the Harnack estimate for the averaged derivative, whose proof follows from the doubling property~\eqref{Jacobian-dbl2}.
 
\begin{cor}\label{est-harnack-N}
If $f:\B\to \Rn$ is a $K$-quasiregular map with bounded multiplicity $N(f,\B)<\infty$, then for all $x\in \B$, $0<\lambda\leq 1$ and $y\in \frac{\lambda}{3} B_x$, it holds that:
\[
   \a_{f, \lambda}(x) \approx \a_{f, \lambda}(y).
\]
The comparison constant depends on $n, K, N(f,\B)$ and $\lambda$. 
\end{cor}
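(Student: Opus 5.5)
The plan is to compare $\a_{f,\lambda}(x)$ and $\a_{f,\lambda}(y)$ through the Jacobian integrals over the balls $\lambda B_x$ and $\lambda B_y$. By quasiregularity, $J_f\le |Df|^n\le K J_f$ a.e., so
\[
\a_{f,\lambda}^n(z)\approx_K \frac{1}{|2B_z|}\int_{\lambda B_z} J_f\,\ud\mathcal{L}^n .
\]
It therefore suffices to compare these Jacobian integrals and the volumes $|2B_x|$, $|2B_y|$.

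First, the geometry. For $y\in\frac{\lambda}{3}B_x$ we have $|y-x|<\frac{\lambda}{6}(1-|x|)$. Since $\lambda\le 1$, this gives $1-|y|\approx 1-|x|$ with absolute constants; explicitly $\frac56(1-|x|)\le 1-|y|\le\frac76(1-|x|)$. Hence $|2B_x|\approx_n|2B_y|$. Moreover, a ball $\lambda B_y$ has radius $\frac{\lambda}{2}(1-|y|)$, and is centered within $\frac{\lambda}{6}(1-|x|)$ of $x$. So $\lambda B_y\subset tB'$ and $\lambda B_x\subset tB''$, where $B'$ and $B''$ are comparable sub-balls and $t$ is a fixed constant in $[1,2)$.

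Concretely, I would choose a small ball $B_0=B(x,c\lambda(1-|x|))$ contained in both $\lambda B_x$ and $\lambda B_y$; for instance $c=1/4$ works, since $\frac{1}{4}+\frac16<\frac{5}{12}$. Then I would show that both $\lambda B_x$ and $\lambda B_y$ are contained in a dilate $sB_0$ whose factor $s$ depends only on $\lambda$. Such a dilate is still a Whitney-type ball, compactly inside $\B$ with distance to the boundary comparable to its radius.

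The key step is the doubling property of $\mu=J_f\,\ud\mathcal{L}^n$ on such balls. This is estimate~\eqref{Jacobian-dbl2}, obtained from the bound $|f(tB)|\le C|f(B)|$ of \cite{hk} together with $N(f,\B)<\infty$ and the change of variables formula. The estimate is stated for $B_x$, but its proof applies to any ball $B$ with, say, $2B\subset\B$.

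Iterating doubling a number of times depending only on $\lambda$ gives
\[
\int_{\lambda B_x}J_f\le\int_{sB_0}J_f\le C(n,K,N,\lambda)\int_{B_0}J_f\le C\int_{\lambda B_y}J_f ,
\]
and symmetrically with $x$ and $y$ exchanged. Combined with $|2B_x|\approx|2B_y|$, this yields $\a_{f,\lambda}(x)\approx\a_{f,\lambda}(y)$.

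The main obstacle is justifying the doubling inequality for these auxiliary balls, which are not of the form $tB_x$ with $1\le t<2$. If the argument of \cite{hk} does not transfer directly, a fallback is available. One can use the Koebe estimate~\eqref{est:qr-koebe}, applied at the different points and scales, to compare both quantities with $\diam f(B_x)/(1-|x|)$ and $\diam f(B_y)/(1-|y|)$. One would then compare these two diameters, which requires a chain of overlapping balls and the same doubling bound. In either route, the finite multiplicity is essential: it converts the image measure $|f(\cdot)|$ into $\int J_f$.
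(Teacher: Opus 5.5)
Your proposal is correct and follows essentially the paper's route: both arguments rest on $J_f\le|Df|^n\le KJ_f$, on $|2B_x|\approx|2B_y|$, and on the doubling property~\eqref{Jacobian-dbl2} of $J_f\,\Leb$ applied to balls that are not literally of the form $tB_x$. The paper also does this implicitly, using $\lambda B_x\subset\frac{8\lambda}{6-\lambda}B_y$ and $\lambda B_y\subset\frac32\lambda B_x$ with one halving step in each direction, whereas your common inner ball $B_0$ and outer dilate $sB_0$ handle both directions at once, so the Koebe fallback is not needed. One small slip: $\frac14+\frac16=\frac{5}{12}$ exactly, not strictly less, so $B_0\subset\lambda B_y$ depends on the strict inequality $|x-y|<\frac{\lambda}{6}(1-|x|)$ and the openness of the balls (or you can simply take $c=\frac15$).
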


\begin{proof}
 If $x\in \B$ and $y\in \frac{\lambda}{3} B_x$, then the following estimations give that $\lambda B_x\subset \lambda_2 B_y$ for $\lambda_2:=\frac{8\lambda}{6-\lambda}$. Indeed, we have for $z\in \lambda B_x$ that 
 \[
  |z-y|\leq |z-x|+|x-y|\leq \frac{\lambda}{2}(1-|x|)+\frac{\lambda}{6}(1-|x|)\leq \frac{8\lambda}{6-\lambda}\frac12 (1-|y|),
 \]
 as
  $$
   1-|x|\leq |x-y|+1-|y|< \frac{\lambda}{6}(1-|x|)+1-|y|\Rightarrow 1-|x|\leq \frac{6}{6-\lambda }(1-|y|).
  $$
 Then, the doubling property of the Jacobian~\eqref{Jacobian-dbl2} implies the following 
 \begin{align*}
  \a_{f,\lambda}^n(x) \leq \frac{K}{|2B_x|} \int_{\lambda B_x} J_f(z) \ud z & \leq  \frac{K}{|2B_x|} \int_{ \frac{8\lambda}{6-\lambda} B_y} J_f(z) \ud z \\
 &\lesssim_{n, K} \frac{1}{|2B_x|} \int_{\frac{4\lambda}{6-\lambda} B_y} J_f(z) \ud z \\
 &\lesssim_{n, K} \frac{1}{|2B_x|} \int_{\lambda B_y} |Df(z)|^n \ud z \approx_{n, K} \a_{f,\lambda}^n(y).  
 \end{align*}
 The opposite estimate follows from the similar reasoning upon noticing that $\lambda B_y\subset \frac{\lambda}{3}(4+\frac{\lambda}{2}) B_x\subset \frac{3}{2}\lambda B_x$, since for $z\in \lambda B_y$ and $y\in \frac{\lambda}{3} B_x$ it holds that 
 \[
 |z-x| \leq \frac{\lambda}{2}(1-|y|)+\frac{\lambda}{6}(1-|x|)\leq \frac{\lambda}{6}\Big[1+3\Big(\frac{\lambda}{6}+1\Big)\Big](1-|x|).
 \]
\end{proof}

\subsection{Harnack inequalities for averaged derivatives of differentiable mappings}
%

In this section we discuss several Harnack estimates for the differential operators $\a_f$ under mild assumption on the map $f$, i.e. without requiring quasiregularity of $f$. 

\begin{lem}\label{lem:harnack}
	Let map $f:\B\rightarrow \R^n$ be such that its Jacoby matrix satisfies ${D\!f \in L^n_{loc}(\B)}$. Then for any $\lambda \in (0,1/2 )$ there exist constants $C_H=C_H(n, \lambda)>0$ and $0<\lambda_1<1<\lambda_2<2$, with $\lambda_1, \lambda_2$ depending only on $\lambda$, such that for all $x\in\B$ and $y\in\lambda B_x$
	\begin{align}
		& \frac{1}{C_H} \a_{f,\lambda_1}(y)\leq \a_f(x)\leq C_H\a_{f,\lambda_2}(y), \label{est1-harnack}\\
		& \frac{1}{C_H} \a_{f,\lambda_1}(x)\leq \a_f(y)\leq C_H\a_{f,\lambda_2}(x). \label{est2-harnack}
	\end{align}
Moreover, $\lambda_1$ and $\lambda_2$ in fact can be chosen so that $0<\lambda_1<1-\frac32\lambda$ and $\frac{2+2\lambda}{2-\lambda}<\lambda_2<2$.
\end{lem}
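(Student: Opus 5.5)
The plan is to reduce everything to elementary ball inclusions together with the comparability of the normalising volumes. Write $\a_f=\a_{f,1}$ and $r_x:=\frac12(1-|x|)$, so that $\lambda B_x=B(x,\lambda r_x)$ and $|2B_x|=\omega_n(1-|x|)^n$. Since only $|Df|^n\in L^1_{loc}$ is used, it suffices to produce, for $y\in\lambda B_x$, the inclusions
\[
\lambda_1 B_y\subset B_x\subset \lambda_2 B_y,\qquad \lambda_1 B_x\subset B_y\subset \lambda_2 B_x ,
\]
and the two-sided bound $1-|y|\approx_\lambda 1-|x|$. Then monotonicity of the integral in the domain gives, for instance,
\[
\a_f(x)^n=\frac{1}{|2B_x|}\int_{B_x}|Df|^n\le \frac{|2B_y|}{|2B_x|}\,\a_{f,\lambda_2}(y)^n ,
\]
and the volume ratio $\big((1-|y|)/(1-|x|)\big)^n$ is bounded by a constant depending only on $n$ and $\lambda$. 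The other three inequalities in \eqref{est1-harnack}--\eqref{est2-harnack} follow in exactly the same way. Note that no quasiregularity or multiplicity is needed, in contrast to Corollary~\ref{est-harnack-N}.

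\textbf{Comparability of distances.} For $y\in\lambda B_x$ we have $|x-y|<\lambda r_x=\frac{\lambda}{2}(1-|x|)$. The triangle inequality then gives
\[
\Big(1-\frac{\lambda}{2}\Big)(1-|x|)\le 1-|y|\le \Big(1+\frac{\lambda}{2}\Big)(1-|x|).
\]
Hence $r_y/r_x\in[1-\lambda/2,\,1+\lambda/2]$, and since $\lambda<1/2$ both factors are bounded away from $0$ and $\infty$.

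\textbf{Inclusions.}
\begin{itemize}
\item $B_x\subset\lambda_2B_y$. If $|z-x|<r_x$, then
\[
|z-y|<r_x+\lambda r_x=(1+\lambda)r_x\le \frac{1+\lambda}{1-\lambda/2}\,r_y=\frac{2+2\lambda}{2-\lambda}\,r_y .
\]
So any $\lambda_2>\frac{2+2\lambda}{2-\lambda}$ works, and $\lambda<1/2$ guarantees $\frac{2+2\lambda}{2-\lambda}<2$, so such a $\lambda_2$ exists in $(1,2)$.
\item $\lambda_1B_y\subset B_x$. If $|z-y|<\lambda_1r_y$, then
\[
|z-x|<\lambda_1(1+\lambda/2)r_x+\lambda r_x ,
\]
which is at most $r_x$ provided $\lambda_1\le\frac{1-\lambda}{1+\lambda/2}$.
\item The remaining two inclusions, $B_y\subset\lambda_2B_x$ and $\lambda_1B_x\subset B_y$, are checked the same way. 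The first needs $(1+\lambda/2)+\lambda<\lambda_2$. The second needs $\lambda_1+\lambda\le 1-\lambda/2$, i.e.\ $\lambda_1\le 1-\frac32\lambda$.
\end{itemize}

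\textbf{Choice of the parameters.} The main point to get right is choosing a single pair $\lambda_1,\lambda_2$ that serves all four inclusions while respecting the stated ranges. For $\lambda_1$, take any positive number below $\min\{\frac{1-\lambda}{1+\lambda/2},\,1-\frac32\lambda\}$; this automatically satisfies $0<\lambda_1<1-\frac32\lambda$. For $\lambda_2$, compare the two lower bounds: $\frac{2+2\lambda}{2-\lambda}$ dominates $1+\frac32\lambda$, because $(2+2\lambda)-(2-\lambda)(1+\frac32\lambda)=\lambda+\frac32\lambda^2>0$. So any $\lambda_2\in\big(\frac{2+2\lambda}{2-\lambda},2\big)$ works for both. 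With these choices, $C_H$ is obtained from the volume ratios and is of the form $C_H=(1-\lambda/2)^{-1}$ (or its analogue), depending only on $\lambda$; the normalisation $1/n$ in the exponent absorbs the power $n$.
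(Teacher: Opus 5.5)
Your proposal is correct and is essentially the paper's proof: the same four ball inclusions with the same thresholds $\frac{2-2\lambda}{2+\lambda}$, $1-\frac32\lambda$, $\frac{2+2\lambda}{2-\lambda}$, $1+\frac32\lambda$, followed by a comparison of the normalising volumes $|2B_x|$ and $|2B_y|$. You use $(1-\frac{\lambda}{2})(1-|x|)\le 1-|y|\le(1+\frac{\lambda}{2})(1-|x|)$ directly instead of the paper's doubling-constant bookkeeping, which gives the explicit constant $C_H=(1-\lambda/2)^{-1}$. The only slip is that $(2+2\lambda)-(2-\lambda)(1+\frac32\lambda)=\frac32\lambda^2$, not $\lambda+\frac32\lambda^2$; this is harmless because only its positivity is used.
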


Let us remark that both the right-hand side estimates in Lemma~\ref{lem:harnack} will be used in what follows: \eqref{est1-harnack} in Theorem~\ref{thm:characterizationsnolder} and~\eqref{est2-harnack} to obtain Corollary~\ref{cor:harnack-est}(b).

\begin{proof}
	Fix $x\in \B$ and $y\in\lambda B_x$. If $z\in \lambda_1 B_y$ for some $\lambda_1>0$, then by the triangle inequality and since $1-|y|\leq d(y, x_{min})$, where $x_{min}\in \Sn$ stands for a point realizing the distance $\dist(x,\Sn)=d(x, x_{min})$, we have the following estimate
	\begin{align*}
 		|x-z|&\leq |x-y|+|y-z| \nonumber \\
 		&< \frac{\lambda}{2}(1-|x|)+\frac{\lambda_1}{2}(1-|y|) \nonumber \\
 		&\leq \frac{\lambda}{2}(1-|x|)+\frac{\lambda_1}{2}( |x-y|+(1-|x|)) \nonumber \\
 		&< \frac{\lambda}{2}(1-|x|)+\frac{\lambda_1}{2}\left(\frac{\lambda}{2}+1\right)(1-|x|)=\left[\frac{\lambda_1}{2}\left(\frac{\lambda}{2}+1\right) +\frac{\lambda}{2}\right](1-|x|). 
	\end{align*}
	Therefore, we have shown that $z\in B_x$, provided that $\left[\frac{\lambda_1}{2}\left(\frac{\lambda}{2}+1\right) +\frac{\lambda}{2}\right]<\frac{1}{2}$, i.e., when 
	$$
	\lambda_1 < \frac{2-2\lambda}{2+\lambda}\in\left(\frac{2}{5},1\right).
	$$
	Hence, we get that, regardless of the value of $\lambda$, for any $\lambda_1\leq\frac{2}{3}$ it holds that $\lambda_1B_y\subset B_x$.

\smallskip
	
\noindent Now suppose $z\in B_x$. Then
 $$
  |y-z|\leq |y-x|+|x-z|< \left(\frac{\lambda}{2}+\frac{1}{2}\right) (1-|x|).
 $$ 
  On the other hand
  $$
   1-|x|\leq |x-y|+1-|y|< \frac{\lambda}{2}(1-|x|)+1-|y|\Rightarrow 1-|x|\leq \frac{2}{2-\lambda }(1-|y|).
  $$
  As consequence, we get $|y-z|<\frac{2+2\lambda}{2-\lambda}\frac{1}{2}(1-|y|)$, proving that $B_x\subset\lambda_2B_y$ for any
\begin{equation}\label{est-lambda-lem-harnack}
 \lambda_2\geq \frac{2+2\lambda}{2-\lambda}\in \left(1,2\right).
\end{equation}
 Let $C_d$ be the doubling constant of the Lebesgue measure in $\R^n$. Then
	\begin{align*}
		&|2B_y|\geq |\lambda_2B_y|\geq |B_x|\geq C_d|2B_x|,\\
		&|2B_x|\geq |B_x|\geq |\lambda_1 B_y|\geq C_d^{j(\lambda_1)}|2B_y|,
	\end{align*} 
	where $j(\lambda_1)$ is the smallest integer such that $2B_y\subset  2^{j(\lambda_1)}\lambda_1B_y$. From this we easily conclude the first assertion~\eqref{est1-harnack} of the lemma, namely:
	$$ 
	\frac{1}{C_d^{1/n}} \a_{f,\lambda_1}(y)\leq \a_f(x)\leq C_d^{\frac{j(\lambda_1)}{n}}\a_{f,\lambda_2}(y).
	$$
	Similarly we show the second Harnack inequality in~\eqref{est2-harnack}. Let $z\in \tilde{\lambda}_1B_x$ for some $\tilde{\lambda}_1>0$. Then
$$
|y-z|\leq |y-x|+|x-z|< \frac{\lambda+\tilde{\lambda}_1}{2} (1-|x|)<  \frac{\lambda+\tilde{\lambda}_1}{2-\lambda }(1-|y|),
$$
	 proving that $\tilde{\lambda}_1B_x\subset B_y$ whenever $\frac{\lambda+\tilde{\lambda}_1}{2-\lambda }<\frac{1}{2}$, i.e.,
	$$
	\tilde{\lambda}_1< \frac{2-3\lambda }{2}\in \left( \frac{1}{4},1\right).
	$$
	Finally, suppose that $z\in B_y$. Then
	\begin{align*}
		|x-z|&\leq |x-y|+|y-z| \\
		&< \frac{\lambda}{2}(1-|x|)+\frac{1}{2}(1-|y|)\leq 
		\frac{\lambda}{2}(1-|x|)+\frac{1}{2}( |x-y|+(1-|x|))
		\\ &< \frac{\lambda}{2}(1-|x|)+\frac{1}{2}\left(\frac{\lambda}{2}+1\right)(1-|x|)=\frac{1}{4}(3\lambda +2)(1-|x|).
	\end{align*}
	Hence, $B_y\subset \tilde{\lambda}_2B_x$, provided that
	$$\tilde{\lambda}_2\geq \frac{3\lambda +2}{2}\in \left( 1, \frac{5}{4}\right) .$$
	Thus, we arrive at the estimate~\eqref{est2-harnack}
	$$ 
	\frac{1}{C_d^{j(\lambda_1)/n}}\a_{f,\tilde{\lambda}_1}(x)\leq \a_f(y)\leq C_d^{1/n}\a_{f,\tilde{\lambda}_2}(x).
	$$
	In order to unify constants in both Harnack estimates we set $C_H:=C_d^{j(\lambda_1)/n}$ and notice that $C_H$ only depends on $n$ and $\lambda_1$ (thus, indirectly on $\lambda$). Moreover,
\begin{align}
 &\lambda_1:=\min \{ \lambda_1,\tilde{\lambda}_1\}<\min\left\{\frac{2-2\lambda}{2+\lambda}, \frac{2-3\lambda }{2}\right\}=\frac{2-3\lambda }{2}, \label{eq-lambda1}\\
 &\lambda_2:=\max\{ \lambda_2,\tilde{\lambda}_2\}>\max\left \{\frac{2+2\lambda}{2-\lambda}, \frac{3\lambda +2}{2}\right\}=\frac{2+2\lambda}{2-\lambda}. \label{eq-lambda2}
\end{align}
 \end{proof}

Similar arguments yield the following variants of the Harnack inequalities:
\begin{cor}\label{cor:harnack-est}
Let map $f:\B\rightarrow \R^n$ be such that its Jacoby matrix satisfies ${D\!f \in L^n_{loc}(\B)}$.
Then the following Harnack estimates hold for the average derivatives $\a_{f,\lambda}$:
\begin{enumerate}
\item[(a)] For every $0<\lambda_1<2$ and $\lambda \in (0,  \lambda_1/3 )$ it holds for all $x\in \B , y\in\lambda B_x$  that
\begin{equation}
\a_{f,\lambda}(x)\leq \a_{f,\lambda_1}(y). 
\label{eq:Harnacklambda1prime}
\end{equation}
\item[(b)] For every $\lambda_2' \in (1,2)$ there exist parameters $\lambda\in (0,1/2)$ and $\lambda_2 <\lambda_2'$ such that for all $x\in \B$ and $y\in \lambda B_x$ it holds that
\begin{equation}
\a_f(y)\lesssim \a_{f,\lambda_2}(x) \lesssim \a_{f,\lambda_2'}(y). \label{cor12:est2}
\end{equation}
\item[(c)] For every $\lambda_1'\in (0,1)$ there exist parameters $\lambda\in (0,1/2)$ and $\lambda_1>\lambda_1'$ such that for all $x\in \B$ and $y\in \lambda B_x$ it holds that
\begin{equation}
 \a_{f,\lambda_1'}(y)\lesssim \a_{f,\lambda_1}(x) \lesssim \a_{f}(y). \label{cor12:est3}
\end{equation}
\end{enumerate}
\end{cor}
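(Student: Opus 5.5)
Each inequality in Corollary~\ref{cor:harnack-est} comes from the same two ingredients as in the proof of Lemma~\ref{lem:harnack}. The first is an inclusion between concentric dilates of $B_x$ and $B_y$, obtained from the triangle inequality. Since $\a_{f,\lambda}$ is monotone in the integration set, this compares the integrals. The second is a comparison of the normalizing volumes $|2B_x|$ and $|2B_y|$. For $y\in\lambda B_x$ the distance estimates give
\[
\Big(1-\frac{\lambda}{2}\Big)(1-|x|)\leq 1-|y|\leq \Big(1+\frac{\lambda}{2}\Big)(1-|x|),
\]
so $|2B_x|\approx_{n,\lambda}|2B_y|$. Throughout, $z$ denotes a point of the smaller set, and I bound $|z-y|$ or $|z-x|$ by $|z-x|+|x-y|$ or $|z-y|+|y-x|$, respectively.

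For (a), let $z\in\lambda B_x$. Then $|z-y|<\lambda(1-|x|)\leq \frac{2\lambda}{2-\lambda}(1-|y|)$. Hence $\lambda B_x\subset\lambda_1B_y$ as soon as $\frac{4\lambda}{2-\lambda}\leq\lambda_1$, i.e.\ $\lambda\leq \frac{2\lambda_1}{4+\lambda_1}$. Since $\lambda_1<2$, we have $\frac{2\lambda_1}{4+\lambda_1}>\frac{\lambda_1}{3}$, so this holds under the hypothesis $\lambda<\lambda_1/3$. The volume ratio is then handled by $|2B_y|\leq(1+\lambda/2)^n|2B_x|$. This gives \eqref{eq:Harnacklambda1prime} with a multiplicative factor at most $1+\lambda/2<\frac{4}{3}$. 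I expect the only delicate point of (a) to be this constant: the argument gives $\lesssim$ with an explicit constant rather than a literal constant $1$. I would state it as $\lesssim$ unless a sharper normalization is intended.

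For (b), fix $\lambda_2'\in(1,2)$. The left inequality is precisely the right half of \eqref{est2-harnack}: any $\lambda_2\geq\frac{3\lambda+2}{2}$ gives $B_y\subset\lambda_2B_x$. For the right inequality I need $\lambda_2B_x\subset\lambda_2'B_y$. If $z\in\lambda_2B_x$, then
\[
|z-y|<\frac{\lambda+\lambda_2}{2}(1-|x|)\leq\frac{\lambda+\lambda_2}{2-\lambda}(1-|y|),
\]
so it suffices that $\lambda_2\leq \lambda_2'\big(1-\frac{\lambda}{2}\big)-\lambda$. Both constraints on $\lambda_2$ are compatible once
\[
\frac{3\lambda+2}{2}<\lambda_2'\Big(1-\frac{\lambda}{2}\Big)-\lambda .
\]
As $\lambda\to0$ this reduces to $1<\lambda_2'$, so it holds for all sufficiently small $\lambda\in(0,1/2)$. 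I then choose $\lambda_2$ in the resulting nonempty interval, which automatically gives $\lambda_2<\lambda_2'$. The volume comparison supplies the implicit constants.

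Part (c) is symmetric to (b). The right inequality is $\lambda_1B_x\subset B_y$, which holds for $\lambda_1<\frac{2-3\lambda}{2}$ as computed in Lemma~\ref{lem:harnack}. For the left inequality I need $\lambda_1'B_y\subset\lambda_1B_x$. For $z\in\lambda_1'B_y$,
\[
|z-x|<\Big(\frac{\lambda}{2}+\frac{\lambda_1'}{2}\Big(1+\frac{\lambda}{2}\Big)\Big)(1-|x|),
\]
so it suffices that $\lambda_1\geq\lambda+\lambda_1'(1+\lambda/2)$. Since $\lambda_1'<1$, for $\lambda$ small we have $\lambda+\lambda_1'(1+\lambda/2)<\frac{2-3\lambda}{2}$. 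This allows a choice of $\lambda_1$ strictly between the two bounds, and then $\lambda_1>\lambda_1'$ holds automatically. Combining the inclusions with the volume comparison gives \eqref{cor12:est3}.
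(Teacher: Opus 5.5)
Your proposal is correct and follows the paper's own argument: triangle-inequality inclusions between dilates of $B_x$ and $B_y$, together with $1-|y|\approx 1-|x|$ for $y\in\lambda B_x$, and in (b) and (c) a small $\lambda$ so that the two constraints on $\lambda_2$ (resp.\ $\lambda_1$) are compatible. Your caveat about (a) is well founded: the paper's proof of (a) drops the factor $(|2B_y|/|2B_x|)^{1/n}=\frac{1-|y|}{1-|x|}\le 1+\lambda/2$. The inequality with constant $1$ can genuinely fail, for example when $|Df|$ is supported in a small ball around $x$ and $|y|<|x|$. However, only the $\lesssim$ version is used later in the paper.
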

\begin{proof}
{\bf Part (a).} In order to prove assertion~\eqref{eq:Harnacklambda1prime} let us fix $\lambda_1 >0$. Let further $x\in \B$ and $\lambda , \lambda_1' \in (0,1)$ for $\lambda_1'$ depending on $x$ whose value will be determined below. Then, for $y\in \lambda B_x$ and $z\in \lambda_1' B_x$ we have
$$
|y-z|\leq |y-x|+|x-z|\leq \left( \frac\lambda 2 + \frac{\lambda_1^\prime}{2}\right) (1-|x|)\leq \left( \frac\lambda 2 + \frac{\lambda_1^\prime}{2}\right)\frac{2}{2-\lambda } (1-|y|),
$$
and the last inequality follows from the estimate $1-|x|\leq |x-y|+1-|y|\leq (1+\frac{\lambda}{2})(1-|y|)$.
Thus, in order to get that $\lambda_1' B_x\subset \lambda_1 B_y$ we need to ensure that
$$
\frac{\lambda + \lambda_1^\prime }{2-\lambda}\leq \frac{\lambda_1}{2}.
$$
This condition can be achieved, for instance, by setting $\lambda_1':=\lambda <\frac{\lambda_1}{3}$. Then $\lambda  B_x\subset \lambda_1 B_y$ and the assertion~\eqref{eq:Harnacklambda1prime} follows from the definition of $\a_{f,\lambda}$. 
\medskip
		
\noindent {\bf Part (b).} Let  $\lambda\in (0,1/2)$ and $\lambda_2\in (1,2)$ be as in Lemma~\ref{lem:harnack}; moreover, recall that $\lambda_2$ depends on $\lambda$. Then, for a fixed $x\in \B$ and all $y\in\lambda B_x$ and $z\in \lambda_2 B_x$ we have
	\begin{align*}
		|z-y|\leq |z-x|+|x-y|\leq \left( \frac{\lambda_2}{2}+\frac{\lambda}{2}\right) (1-|x|)\leq \left( \frac{\lambda_2}{2}+\frac{\lambda}{2}\right)\frac{2}{2-\lambda}(1-|y|).
	\end{align*}
In consequence, $z\in \lambda_2' B_y$ for every $\lambda_2' \in (1,2)$, provided that $\frac{\lambda_2+\lambda}{2-\lambda}\leq\frac{\lambda_2'}{2}$. Furthermore, since by~\eqref{eq-lambda2} we have that $\lambda_2> \frac{2+2\lambda}{2-\lambda}>1$, one may get any $\lambda_2' \in (1,2)$, upon choosing small $\lambda \approx 0$, resulting in big $\lambda_2\approx 1$.
Therefore, we may choose $\lambda_2$ satisfying $\lambda_2<\lambda_2' \in (1,2)$ such that for all $x\in \B$ and $y\in \lambda B_x$ 
\begin{equation*}
    \a_{f,\lambda_2}(x) \lesssim \a_{f,\lambda_2'}(y).
\end{equation*}
From this estimate, assertion~\eqref{cor12:est2} follows directly upon combining with the right-hand side estimate in~\eqref{est2-harnack}.
\medskip

\noindent {\bf Part (c).} Fix $\lambda_1' \in (0,1)$ and a point $x\in \B$. Then, for each $\lambda \in (0, \frac12)$ and $y\in \lambda B_x$ and $z\in \lambda_1'B_y$ the triangle inequality implies that
$$
|z-x|\leq \frac{\lambda_1^\prime}{2}(1-|y|)+\frac{\lambda }{2}(1-|x|)\leq  \left(\frac{\lambda_1'}{2}(1+\frac{\lambda}{2}) +\frac{\lambda}{2}\right) (1-|x|)
$$
Now let $\lambda_1$ be as in Lemma~\ref{lem:harnack}, then $z\in\lambda_1 B_x$ if
$$
\lambda_1^\prime (1+\frac{\lambda}{2} )+\lambda \leq \lambda_1.
$$
However, recall that by~\eqref{eq-lambda1} in Lemma~\ref{lem:harnack}, it holds that $\lambda_1<1-\frac32 \lambda$ and, thus, letting $\lambda \rightarrow 0^+$ allows for $\lambda_1\rightarrow 1^-$. By a limiting argument, since $\lambda_1^\prime <1$, we can choose $\lambda$ and $\lambda_1$ which verify both Lemma~\ref{lem:harnack} and the assertion~\eqref{cor12:est3}.
\end{proof}

\subsection{Reverse quantitative Harnack inequality for averaged derivative} 

The goal of this section is to prove Theorem~\ref{thm:afintegrability}, an integral variant of the Harnack estimate for $\a_f$, which we call the~\emph{reverse quantitative Harnack inequality}. The justification of this name comes from the fact that in the assertion of Theorem~\ref{thm:afintegrability} the parameter $\lambda_2$ in $\a_{f,\lambda_2}$ on the left-hand side is larger than $\lambda_1$ in $\a_{f,\lambda_1}$ appearing on the right-hand side of the estimate. Theorem~\ref{thm:afintegrability} leads to a handy integral estimates relating powers of norms $|Df|$ and $\a_f$, see Corollary~\ref{cor:lem32}, and is also employed in the proof of Theorem~\ref{thm:characterizationsnolder}. Moreover, as remarked after the statement of the theorem, it holds without the quasiregularity assumption on the map.

In order to improve the clarity of the proof of the theorem, we first show the following three auxiliary lemmas.

\begin{lem}\label{lem: uniscale-balls}
	For any point $z \in \B$ and $\lambda_2\in (1,2)$ there exist $\tau_0=\tau_0(\lambda_2)\in (0,\frac23),\tau_1 =\tau_1 (\tau_0,\lambda_2)\in (1,2)$, both independent of $z$, such that for every $x\in \tau_0 B_z$ it holds that $\lambda_2 B_x\subset \tau_1 B_z$. 
	
Moreover, for each $\kappa$ such that $\tau_1 <\kappa <2$ there exists $\eta =\eta (\tau_0, \kappa )$ such that $\eta B_x\subset \kappa B_z$ for all $x\in \tau_1 B_z$. 
\end{lem}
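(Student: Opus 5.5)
This is a triangle-inequality computation of the same type as in Lemma~\ref{lem:harnack} and Corollary~\ref{cor:harnack-est}. The only tool is the comparison of distances to the sphere for nearby points: if $x\in \tau B_z$, then
\[
\Big(1-\frac{\tau}{2}\Big)(1-|z|)\leq 1-|x|\leq \Big(1+\frac{\tau}{2}\Big)(1-|z|),
\]
which follows from $\big|\,|x|-|z|\,\big|\le |x-z|<\frac{\tau}{2}(1-|z|)$.

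\textbf{First assertion.} Fix $\lambda_2\in(1,2)$ and let $x\in\tau_0B_z$, $w\in\lambda_2B_x$. The triangle inequality and the upper comparison give
\[
|w-z|\leq |w-x|+|x-z|<\frac{\lambda_2}{2}(1-|x|)+\frac{\tau_0}{2}(1-|z|)\leq \frac12\Big[\lambda_2\Big(1+\frac{\tau_0}{2}\Big)+\tau_0\Big](1-|z|).
\]
Hence $\lambda_2B_x\subset\tau_1B_z$ with $\tau_1:=\lambda_2(1+\tau_0/2)+\tau_0$, a quantity that does not depend on $z$. Since $\lambda_2<2$, we have $\tau_1\to\lambda_2<2$ as $\tau_0\to0$, so we can choose $\tau_0\in(0,\frac23)$ small enough that $\tau_1<2$; any explicit choice such as $\tau_0<\frac{2(2-\lambda_2)}{\lambda_2+2}$ works. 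Also $\tau_1>\lambda_2>1$, so $\tau_1\in(1,2)$ as required.

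\textbf{Second assertion.} Fix $\kappa\in(\tau_1,2)$ and let $x\in\tau_1B_z$, $w\in\eta B_x$. The same comparison, now with $\tau=\tau_1$, gives
\[
|w-z|<\frac{\eta}{2}(1-|x|)+\frac{\tau_1}{2}(1-|z|)\leq\frac12\Big[\eta\Big(1+\frac{\tau_1}{2}\Big)+\tau_1\Big](1-|z|).
\]
So $\eta B_x\subset\kappa B_z$ as soon as $\eta\leq \frac{\kappa-\tau_1}{1+\tau_1/2}=\frac{2(\kappa-\tau_1)}{2+\tau_1}$, which is positive because $\kappa>\tau_1$. This $\eta$ depends only on $\kappa$ and $\tau_1$, hence on $\tau_0$ (and on $\lambda_2$ through $\tau_1$), and again not on $z$.

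There is no real obstacle. The only point needing care is keeping $\tau_1$ strictly below $2$, which forces $\tau_0$ small; this is exactly why both $\tau_0$ and $\eta$ are stated as depending on $\lambda_2$ and $\kappa$.
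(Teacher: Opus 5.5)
Your proof is correct and follows the paper's argument essentially verbatim. You use the same triangle-inequality estimate via $1-|x|\le |x-z|+1-|z|$, the same choice $\tau_1=\tau_0(\frac{\lambda_2}{2}+1)+\lambda_2$ with $\tau_0<\frac{2(2-\lambda_2)}{\lambda_2+2}$, and the same bound $\eta\le\frac{2(\kappa-\tau_1)}{\tau_1+2}$.
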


The lemma is the key auxiliary tool for Theorem~\ref{thm:afintegrability} and captures the following geometric observations (see also Figure~\ref{fig:comparisondiagram}):
\begin{itemize}
\item First assertion of the lemma says that for a given point $z\in \B$ we can find a slightly (smaller) hyperbolic ball $\tau_0 B_z$ contained in the hyperbolic ball $B_z$, such that when considering hyperbolic balls $\lambda_2 B_x$ centered at points in that smaller ball  $\tau_0 B_z$, as close to $\partial \B$ as possible (since $\lambda_2$ is allowed to be close to $2$), there is a (larger) hyperbolic ball $\tau_1 B_z$ that contains all balls $\lambda_2 B_x$. Moreover, constants $\tau_1$ and $\tau_2$ can be chosen uniformly with respect to points $z$ and $x$. Observe further, that $\tau_0$ is chosen to be sufficiently small in order to handle the case when the ball $\tau_0B_z$ contains points $x$ close to the origin, and so the ball $\lambda_2B_x$ could a priori be large.
\item We can enlarge the above ball $\tau_1 B_z$ to the ball $\kappa B_z$, which can be as close to $\partial \B$ as possible and, similarly as above, be able to find that hyperbolic balls $\eta B_x$ are contained in $\kappa B_z$, now for points $x$ in the (larger) hyperbolic ball $\tau_1 B_z$. Furthermore, the constant $\eta$ is independent of $z$ and $x$.
\end{itemize}

\begin{figure}[ht]
\centering
\includegraphics[trim=2.8cm 1.6cm 1.5cm 1.6cm, clip, scale=0.6]{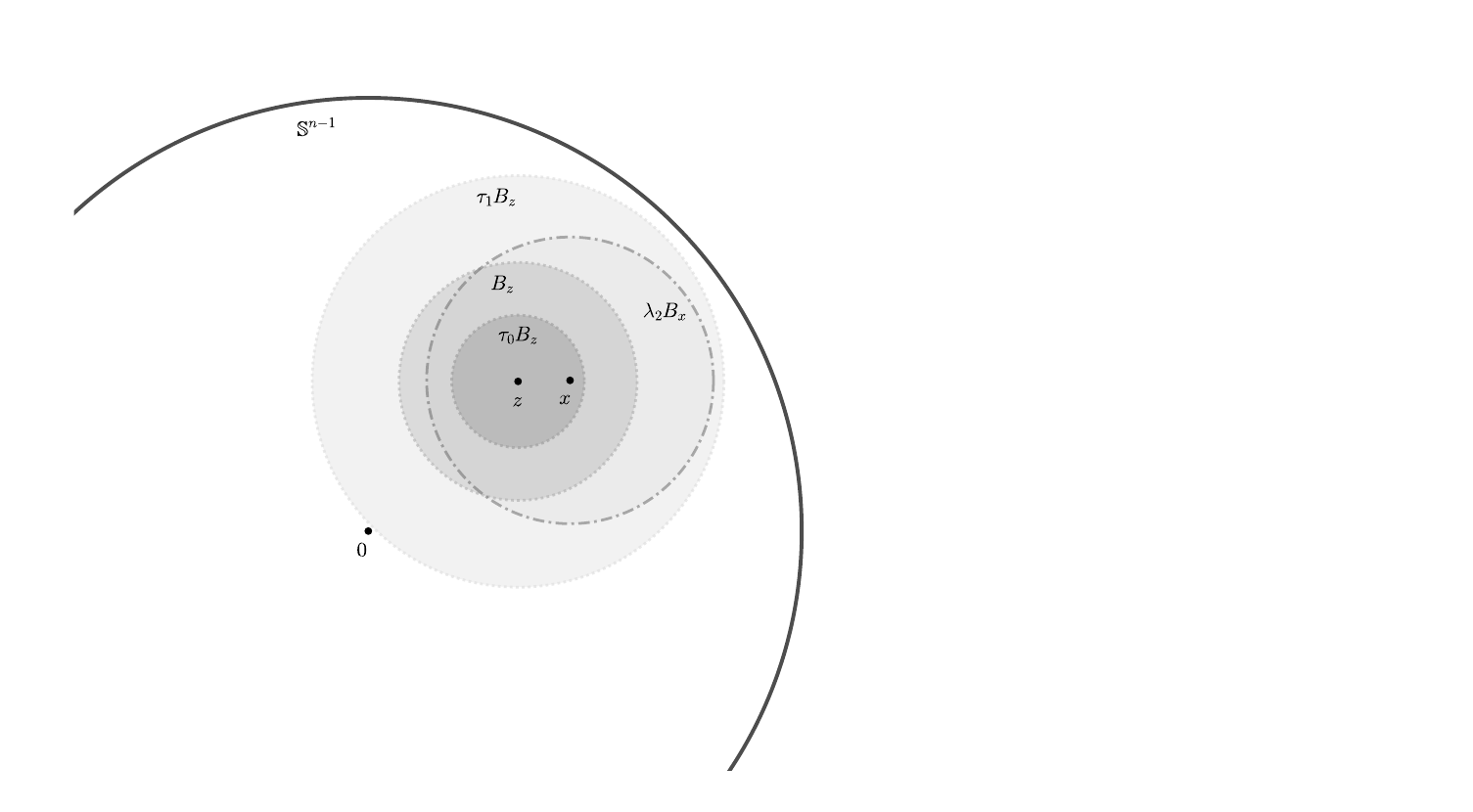}
\caption{Lemma~\ref{lem: uniscale-balls}}
\label{fig:comparisondiagram}
\end{figure}

\begin{proof}
	Let $z\in \B$ and $\lambda_2 \in (1,2)$ be any. Then, for all $x\in \tau_0 B_z$ and $y\in \lambda_2 B_x$ we have
	\begin{align*}
		|y-z|& \leq |y-x|+|x-z|\leq \frac{\lambda_2}{2}(1-|x|)+\frac{\tau_0}{2}(1-|z|)\leq \frac{\lambda_2}{2}(|x-z|+1-|z|) +\frac{\tau_0}{2}(1-|z|)\\
		&\leq \frac{\lambda_2}{2}(\frac{\tau_0}{2}+1)(1-|z|)+\frac{\tau_0}{2}(1-|z|)=\frac{\tau_0(\frac{\lambda_2}{2} +1)+\lambda_2}{2}(1-|z|).
	\end{align*}
	For $\tau_0< \frac{2(2-\lambda_2)}{\lambda_2+2}$ we then set $\tau_1 := \tau_0(\frac{\lambda_2}{2} +1)+\lambda_2<2$, and by the above estimate it follows that $y\in \tau_1 B_z$ and, hence, $\lambda_2 B_x\subset \tau_1 B_z$.
	
	For the second assertion of the lemma, let $\kappa$ be any number in $(\tau_1 ,2)$ and let $x\in \tau_1 B_z$. Then, by reasoning similar to the one in the first assertion, it holds for $\eta >0$ and $y\in \eta B_x$ that
	\begin{align*}
		|y-z|&\leq |y-x|+|x-z|\leq \frac{\eta}{2}(1-|x|)+\frac{\tau_1}{2}(1-|z|)\leq \frac{\eta }{2}(|x-z|+1-|z|) +\frac{\tau_1 }{2}(1-|z|)\\
		&\leq \frac{\eta }{2}(\frac{\tau_1}{2}+1)(1-|z|)+\frac{\tau_1}{2}(1-|z|)= (\frac{\eta}{2}(\frac{\tau_1}{2}+1)+\frac{\tau_1}{2})(1-|z|).
	\end{align*}
Therefore, in order to have $y\in \kappa B_z$ it must hold that $(\frac{\eta}{2}(\frac{\tau_1}{2}+1)+\frac{\tau_1}{2}) \leq \frac{\kappa}{2}$, or equivalently $\eta\leq \frac{2(\kappa -\tau_1)}{\tau_1 +2}$, which is possible since $\kappa >\tau_1$. Thus, for any such $\eta$ we have $\eta B_x\subset \kappa B_z$.
\end{proof}

\begin{lem}\label{lem:uniscale-balls2}
	Let $x\in\B$, $\lambda\in (0,1)$ and $y,z\in \lambda B_x$. Then $y\in 4\lambda B_z$. 
\end{lem}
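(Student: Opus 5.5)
The argument is a direct triangle-inequality estimate, in the same spirit as the proofs of Lemma~\ref{lem:harnack} and Lemma~\ref{lem: uniscale-balls}. Recall that $\lambda B_x = B\big(x,\tfrac{\lambda}{2}(1-|x|)\big)$, so the goal is to show
\[
|y-z| < \frac{4\lambda}{2}(1-|z|) = 2\lambda(1-|z|).
\]
There are two steps. First, bound $|y-z|$ from above in terms of $1-|x|$. Second, bound $1-|x|$ from above in terms of $1-|z|$.

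For the first step, $y,z\in\lambda B_x$ gives
\[
|y-z|\le |y-x|+|x-z|<\lambda(1-|x|).
\]
For the second step, use the same device as in the proof of Lemma~\ref{lem:harnack}:
\[
1-|x|\le |x-z|+1-|z| < \frac{\lambda}{2}(1-|x|)+1-|z|.
\]
Rearranging yields
\[
1-|x| < \frac{2}{2-\lambda}(1-|z|).
\]
Since $\lambda\in(0,1)$, we have $2-\lambda>1$, and therefore $1-|x|<2(1-|z|)$.

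Combining the two steps,
\[
|y-z|<\lambda\cdot \frac{2}{2-\lambda}(1-|z|)< 2\lambda(1-|z|),
\]
which is exactly the statement $y\in 4\lambda B_z$. The bound is in fact slightly better than claimed, namely $y\in \tfrac{4\lambda}{2-\lambda}B_z$.

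There is no real obstacle here. The only point requiring care is to get the direction of the comparison between $1-|x|$ and $1-|z|$ right, and to use $\lambda<1$ so that the factor $\tfrac{2}{2-\lambda}$ stays below $2$. One should also note that $4\lambda B_z$ may not be contained in $\B$ when $\lambda\ge 1/2$. This is harmless, because the lemma only claims membership of $y$, which already lies in $\B$.
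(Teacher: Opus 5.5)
Your proposal is correct and follows essentially the same route as the paper: bound $|y-z|<\lambda(1-|x|)$ by the triangle inequality, use $1-|x|\le \frac{2}{2-\lambda}(1-|z|)$, and absorb $\frac{2}{2-\lambda}\le 2$ using $\lambda<1$. Your sharper observation that $y\in\frac{4\lambda}{2-\lambda}B_z$ is also correct.
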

\begin{proof}
	Let $y,z\in \lambda B_x$. It holds that
\[
1-|x|\leq |x-z|+1-|z|\leq \frac \lambda 2 (1-|x|)+1-|z|,\,\,\hbox{and so}\,\, 1-|x|\leq\frac{2}{2-\lambda}(1-|z|).
\]
Then, the assertion of the lemma follows immediately, as
\begin{align*}
|y-z|\leq |y-x|+|x-z|\leq \lambda (1-|x|)\leq\frac{2\lambda }{2-\lambda}(1-|z|)\leq 2\lambda (1-|z|),
\end{align*}
 and thus $y\in 4\lambda B_z$.
\end{proof}

\begin{lem}\label{lem:cover-aux}
Let $x\in\B$ and $\eta_1,\eta_2 \in (0,2)$. Then, there exists a finite covering of the ball $\eta_1B_x$ by balls $\{ \eta_2 B_{x_i(x)}\}_{i=1}^N$, where $N$ depends only on $n,\eta_1$ and $\eta_2$.
\end{lem}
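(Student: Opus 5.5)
The plan is a volume-counting argument based on a maximal separated set, using that all the balls involved have comparable radii. Fix $x\in\B$ and write $d:=1-|x|$. The first step is to show that every point $w\in\eta_1 B_x$ has comparable distance to the boundary. On one side,
\[
1-|w|\le 1-|x|+|x-w|<\Big(1+\tfrac{\eta_1}{2}\Big)d<2d .
\]
On the other side,
\[
1-|w|\ge d-|x-w|>\Big(1-\tfrac{\eta_1}{2}\Big)d .
\]
This is positive because $\eta_1<2$. Consequently the radius $\frac{\eta_2}{2}(1-|w|)$ of $\eta_2 B_w$ lies between $c_1 d$ and $c_2 d$, where
\[
c_1:=\tfrac{\eta_2}{2}\Big(1-\tfrac{\eta_1}{2}\Big), \qquad c_2:=\tfrac{\eta_2}{2}\Big(1+\tfrac{\eta_1}{2}\Big).
\]
Both constants depend only on $\eta_1$ and $\eta_2$.

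Next, pick a maximal subset $\{x_i\}\subset \eta_1 B_x$ with pairwise distances at least $c_1 d$. Such a set exists by Zorn's lemma, or greedily, since any such set is finite by the counting step below. By maximality, every $w\in\eta_1 B_x$ satisfies $|w-x_i|<c_1 d$ for some $i$. Since $c_1 d\le \frac{\eta_2}{2}(1-|x_i|)$, this gives $w\in \eta_2 B_{x_i}$, so the balls $\eta_2 B_{x_i}$ cover $\eta_1 B_x$.

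To bound the cardinality $N$, note that the balls $B(x_i, c_1 d/2)$ are pairwise disjoint. They are all contained in
\[
B\Big(x,\, \tfrac{\eta_1}{2}d + \tfrac{c_1}{2}d\Big).
\]
Comparing Lebesgue volumes then yields
\[
N \le \Big(\frac{\eta_1+c_1}{c_1}\Big)^n .
\]
This bound depends only on $n$, $\eta_1$ and $\eta_2$. The factor $d$ cancels, which is exactly why $N$ is independent of $x$.

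The only step needing care is the first one: the uniform lower bound on $1-|w|$, which uses $\eta_1<2$ so that the small balls do not degenerate near $\partial\B$. Once that is in place, the rest is the standard doubling-measure counting argument in $\R^n$.
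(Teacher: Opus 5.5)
Your proof is correct and follows essentially the same route as the paper. In both, $\eta_1B_x$ is covered by Euclidean balls of radius a fixed small multiple of $1-|x|$, centred in $\eta_1B_x$, whose number is bounded by doubling/volume counting; then the bound $1-|x|\le \frac{2}{2-\eta_1}(1-|x_i|)$, which uses $\eta_1<2$, places each small ball inside $\eta_2B_{x_i}$. Your maximal separated set simply makes explicit the paper's step of applying the doubling condition $m$ times.
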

\begin{proof}
By applying metrically doubling condition $m$ times, for the value of $m$ to be determined at the end of the proof, we have that $\eta_1 B_x$ can be covered by the following collection of 
balls
$$
  B_i(x):=B\left(x_i(x),2^{-m-1}\eta_1(1-|x|)\right),\quad x_i(x)\in \eta_1 B_x\quad \hbox{ for } 1\leq i\leq 2^{mn}.
$$
Since $x_i(x)\in \eta_1B_x$, the similar argument as in the proof of the previous two lemmas gives the following estimate
$$
 1-|x|\leq |x-x_i(x)|+1-|x_i(x)|\leq \frac{\eta_1}{2} (1-|x|)+1-|x_i(x)|,\hbox{ and hence }1-|x|\leq \frac{2}{2-\eta_1}(1-|x_i(x)|).
$$
Therefore, the radii of balls $B_i(x)$ satisfy
$$
\mathrm{rad} (B_i(x))=2^{-m-1}\eta_1(1-|x|)\leq 2^{-m}\frac{\eta_1}{2-\eta_1}(1-|x_i(x)|).
$$
Finally, upon choosing $m$ such that $2^{-m}\frac{\eta_1}{2-\eta_1}<\frac{\eta_2}{2}$, we conclude that $B_i(x)\subset \eta_2 B_{x_i(x)}$. 
\end{proof}


\begin{theorem}\label{thm:afintegrability}
Let $f:\B\rightarrow \R^n$ be a $K$-quasiregular map and $u$ a non-negative function satisfying the Harnack inequality:
	\[
	u(x)\simeq u(y)\quad \hbox{ for all } x\in\B \hbox{ and }y\in B_x,
	\]  
	with the comparison constant independent of $x$ and $y$.
%
	Then for every $p\in (0,\infty )$ and $0<\lambda_1 <\lambda_2<2$ we have
	$$
	 \int_{\B}\a^p_{f,\lambda_2}(x)\,u(x)\ud x \lesssim \int_{\B}\a^p_{f,\lambda_1}(x)\, u(x)\ud x,
	 $$
	where the comparison constant depends on $\lambda_1 ,\lambda_2, n$ and $p$.
\end{theorem}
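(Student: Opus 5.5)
The plan is to localize with the Whitney covering (Lemma~\ref{lem:WhitneyCovering}) and the geometric Lemmas~\ref{lem: uniscale-balls}, \ref{lem:uniscale-balls2} and~\ref{lem:cover-aux}. The idea is to replace both integrals by sums over a uniform family of hyperbolic balls, on which $u$ is essentially constant by its Harnack property. Quasiregularity itself is not needed; only $Df\in L^n_{loc}$ and the geometry of hyperbolic balls enter.

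\textbf{Step 1 (upper bound for the left-hand side).} Fix $\tau_0=\tau_0(\lambda_2)$ and $\tau_1\in(1,2)$ as in Lemma~\ref{lem: uniscale-balls}. Cover $\B$ by Whitney balls $\tau_0 B_{z_i}$; this is Lemma~\ref{lem:WhitneyCovering} with $\eta=\tau_0/2$, and $d(z_i,\Sn)=1-|z_i|$. For $x\in\tau_0B_{z_i}$ we have $\lambda_2B_x\subset\tau_1B_{z_i}$ and $1-|x|\approx 1-|z_i|$, so $\a_{f,\lambda_2}(x)\lesssim \a_{f,\tau_1}(z_i)$. Moreover $u(x)\approx u(z_i)$: points of $\tau_0B_{z_i}\subset B_{z_i}$ are reachable by a single application of the Harnack inequality. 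This gives
\[
\int_\B \a_{f,\lambda_2}^p u\,\ud x\lesssim \sum_i \a_{f,\tau_1}^p(z_i)\,u(z_i)\,(1-|z_i|)^n .
\]

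\textbf{Step 2 (splitting the larger ball).} By Lemma~\ref{lem:cover-aux}, cover $\tau_1B_{z_i}$ by $N=N(n,\tau_1,\lambda_1)$ balls $\frac{\lambda_1}{4}B_{w_{ij}}$ with $w_{ij}\in\tau_1B_{z_i}$. Then
\[
\a^n_{f,\tau_1}(z_i)\lesssim\sum_j \frac{1}{|2B_{w_{ij}}|}\int_{\frac{\lambda_1}{4}B_{w_{ij}}}|Df|^n .
\]
The radii are comparable because $1-|w_{ij}|\approx 1-|z_i|$ with constants depending on $\tau_1$. For each $j$ and every $x\in\frac{\lambda_1}{4}B_{w_{ij}}$, a computation as in Lemma~\ref{lem:uniscale-balls2} or Corollary~\ref{cor:harnack-est}(a) gives $\frac{\lambda_1}{4}B_{w_{ij}}\subset\lambda_1B_x$. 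Hence $\frac{1}{|2B_{w_{ij}}|}\int_{\frac{\lambda_1}{4}B_{w_{ij}}}|Df|^n\lesssim \a^n_{f,\lambda_1}(x)$ for all such $x$.

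\textbf{Step 3 (reassembling).} Raise to the power $p/n$, using $(\sum_{j\le N}a_j)^{p/n}\lesssim_{N,p}\sum_j a_j^{p/n}$, which holds for all $p>0$ since $N$ is bounded. Then average the pointwise bound over $x\in\frac{\lambda_1}{4}B_{w_{ij}}$, a set of measure $\approx(1-|z_i|)^n$. This yields
\[
\a_{f,\tau_1}^p(z_i)(1-|z_i|)^n\lesssim\sum_j\int_{\frac{\lambda_1}{4}B_{w_{ij}}}\a^p_{f,\lambda_1}(x)\,\ud x .
\]
Insert $u(z_i)\approx u(x)$ for these $x$. Since $x$ and $z_i$ lie in $\kappa B_{z_i}$ for some $\kappa<2$, this follows from a bounded number of Harnack steps along a chain of hyperbolic balls, by Lemma~\ref{lem:cover-aux}. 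Finally sum over $i$. The sets $\frac{\lambda_1}{4}B_{w_{ij}}\subset\kappa B_{z_i}$ (second part of Lemma~\ref{lem: uniscale-balls}) have bounded overlap, because the enlarged Whitney balls $\kappa B_{z_i}=\frac{\kappa}{\tau_0}(\tau_0B_{z_i})$ do. This is Lemma~\ref{lem:WhitneyCovering} with $\tau=\kappa/\tau_0<1/\eta=2/\tau_0$, which is exactly the constraint $\kappa<2$. The result is $\lesssim\int_\B\a^p_{f,\lambda_1}u$.

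\textbf{Main obstacle.} The delicate point is the bookkeeping of parameters so that every constant is uniform in $z_i$. One needs $\kappa<2$ to keep the bounded overlap of the enlarged Whitney balls, while the chain argument for $u$ and the radius comparisons require $\tau_1$, $\kappa$ and $\lambda_1$ to stay away from $2$ and $0$. I would first fix $\tau_0,\tau_1$ from $\lambda_2$, then choose $\kappa\in(\tau_1,2)$, and only then the covering constant $N$. This order should make each constant depend only on $n,p,\lambda_1,\lambda_2$ and the Harnack constant of $u$.
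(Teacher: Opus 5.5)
Step~3 has one step that fails as written. There you assert $\frac{\lambda_1}{4}B_{w_{ij}}\subset\kappa B_{z_i}$ and cite the second part of Lemma~\ref{lem: uniscale-balls}. That lemma gives $\eta B_w\subset\kappa B_z$ for $w\in\tau_1B_z$ only when $\eta\le\frac{2(\kappa-\tau_1)}{\tau_1+2}$. Since $\kappa<2$ and $\tau_1>\lambda_2$, this threshold is $<\frac{2(2-\lambda_2)}{2+\lambda_2}$. It is controlled by $\lambda_2$, not by $\lambda_1$.

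The inclusion is in fact false in general. Take $\lambda_2=1.5$, $\tau_1=1.6$, $\lambda_1=1.4$, and let $w_{ij}$ lie near the point of $\partial(\tau_1B_{z_i})$ closest to the origin; nothing in Lemma~\ref{lem:cover-aux} prevents this. Then $\frac{\lambda_1}{4}B_{w_{ij}}$ reaches distance about $\bigl(\frac{\tau_1}{2}+\frac{\lambda_1}{8}(1+\frac{\tau_1}{2})\bigr)(1-|z_i|)\approx 1.115\,(1-|z_i|)$ from $z_i$. So it lies outside every $\kappa B_{z_i}$ with $\kappa<2$. Both consequences you draw from the inclusion are therefore unsupported: the comparison $u(x)\approx u(z_i)$ via Remark~\ref{u-HI-kappa}, and the bounded overlap via $\tau=\kappa/\tau_0<2/\tau_0$ in Lemma~\ref{lem:WhitneyCovering}.

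The repair is the paper's choice of covering scale. Cover $\tau_1B_{z_i}$ by balls $\lambda B_{w_{ij}}$ with $\lambda:=\min\{\lambda_1/4,\eta(\tau_0,\kappa)\}$; the paper takes $\lambda<\min\{\lambda_1/12,\eta\}$. Then $\lambda B_{w_{ij}}\subset\kappa B_{z_i}$ by Lemma~\ref{lem: uniscale-balls}. For $x\in\lambda B_{w_{ij}}$, Lemma~\ref{lem:uniscale-balls2} gives $\lambda B_{w_{ij}}\subset 4\lambda B_x\subset\lambda_1B_x$. Your Steps~2--3 then go through unchanged, with $N$ now also depending on $\eta$, hence on $\lambda_2$. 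This is exactly the order of choices ($\tau_0,\tau_1$, then $\kappa$, then the covering) you describe in your last paragraph.

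Alternatively, keep the scale $\lambda_1/4$ and drop the inclusion. Get $u(x)\approx u(z_i)$ from the chain $z_i\to w_{ij}\to x$. Get bounded overlap by a packing argument, using $1-|x|\approx 1-|z_i|$ and $|x-z_i|\lesssim 1-|z_i|$.

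With this correction, your proof is the paper's argument with a simpler first step. The paper covers $\lambda_2B_x$ by $x$-dependent balls $\lambda B_{x_i(x)}$. It must then locate each $x_i(x)$ in a fixed ball $\lambda B_{z_{j,k}}$ and invoke Corollary~\ref{cor:harnack-est}(a). Your bound $\a_{f,\lambda_2}(x)\lesssim\a_{f,\tau_1}(z_i)$, uniform over the Whitney ball, removes the $x$-dependent covering altogether.
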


We remark that Theorem~\ref{thm:afintegrability} holds under the weaker assumption, due to Lemma~\ref{lem:harnack}, that $Df$, the Jacoby matrix of $f$, satisfies ${D\!f \in L^n_{loc}(\B)}$ and no quasiregularity is employed. However, since we apply the result only in the case of quasiregular mappings, we formulate it in that setting, see also similar remark following the statement of Lemma~\ref{lem:2.5AK}.

\begin{remark}\label{u-HI-kappa}
 Let function $u$ be as in Theorem~\ref{thm:afintegrability} and the Harnack estimate for $u$ holds for all $x\in\B$ and $y\in B_x$, then we also have
\[
 u(x)\simeq u(y) \hbox{ for all } y\in\kappa B_x \hbox{ and } \kappa \in (1,2).
\]
The proof follows immediately from Lemma~\ref{lem:cover-aux}. Namely, there is a covering of $\kappa B_x$ by balls $\{ \frac 12 B_{x_i}\}_{i=0}^N$ with $N=N(n,\kappa )$ such that $x\in B_{x_0}, y\in B_{x_N}$ and $x_i\in B_{x_{i-1}}$ for all $i=1,\dots N$  and so, it holds that $u(x)\simeq u(x_0)\simeq\cdots\simeq u(x_N)\simeq u(y)$.
\end{remark}

 \begin{proof}
 We prove the theorem under the assumption that $\lambda_2>1$. The proof for $\lambda_2\leq 1$ follows by the trivial observation that $\a_{f,\lambda_2'}(x)\leq \a_{f, \lambda_2}(x)$ for any $\lambda_2'\leq \lambda_2$ and for almost all $x$ in $\B$.

Let $\tau_0, \tau_1, \kappa$ and $\eta$ as in Lemma~\ref{lem: uniscale-balls}. Furthermore, let $0<\lambda <\min \{ \lambda_1/12, \eta \}$. The choice of $\lambda_1/12$ is to ensure that we can apply the Harnack inequality~\eqref{eq:Harnacklambda1prime} in Corollary~\ref{cor:harnack-est}(b) on balls $4\lambda B_x$ instead of $\lambda B_x$, see Lemma~\ref{lem:uniscale-balls2}. 

Consider a Whitney covering $\{ B_j\}_{j\in\N}$ of the ball $\B$ with the following properties:
\begin{enumerate}
	\item $\displaystyle \B\subset\bigcup_{j\in\N}B_j$.
	\item $B_j=B(z_j, \frac{\tau_0}{2} (1-|z_j|))=\tau_0 B_{z_j}$ with $z_j\in \B$.
	\item $\displaystyle \sum_{j\in\N}\chi_{\kappa B_{z_j}} = \sum_{j\in\N}\chi_{\frac{\kappa }{\tau_0}B_{j}} \leq C<\infty $.
\end{enumerate}

Fix $j\in\N$ and let $x\in B_j$. By Lemma~\ref{lem:cover-aux} for $\eta_1:=\lambda_2$ and $\eta_2:=\lambda$ we have a covering $\{ \lambda B_{x_i(x)}\}_{i=1}^N$ of $\lambda_2B_x$. Notice that, since $x\in B_j$ and $x_i(x)\in \lambda_2B_x$, by Lemma~\ref{lem: uniscale-balls} we have $x_i(x)\in \tau_1 B_{z_j}$.

Moreover, observe that by the second assertion of Lemma~\ref{lem: uniscale-balls}. applied to points $z:=z_j$ and $x:=x_i(x)$. we also have that
for $\lambda<\eta$ it holds 
\begin{equation}\label{thm36-aux1}
\lambda B_{x_i(x)}\subset \kappa B_j,
\end{equation}

see Figure~\ref{fig:comparisondiagram-left}.
\begin{figure}
\centering
\begin{minipage}{.5\textwidth}
  \centering
  \includegraphics[trim=0cm 1.1cm 1.5cm 1cm, clip, scale=0.6]{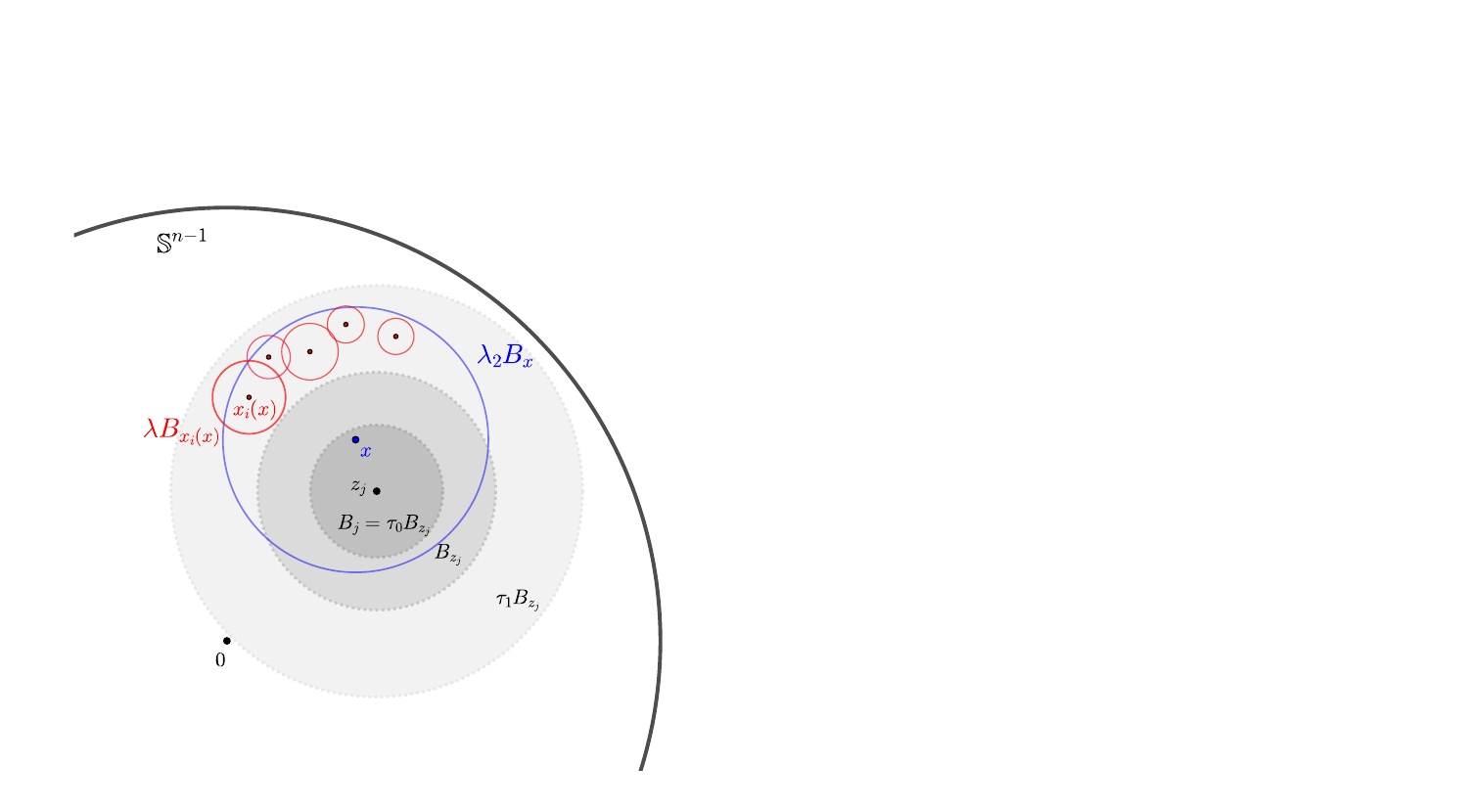}
  \caption{{\footnotesize Covering $\{ \lambda B_{x_i(x)}\}_{i=1}^N$ of $\lambda_2B_x$}.}
  \label{fig:comparisondiagram-left}
  \end{minipage}%
\begin{minipage}{.5\textwidth}
  \centering
  \includegraphics[trim=10.5cm 0.8cm 1.5cm 1cm, clip, scale=0.5]{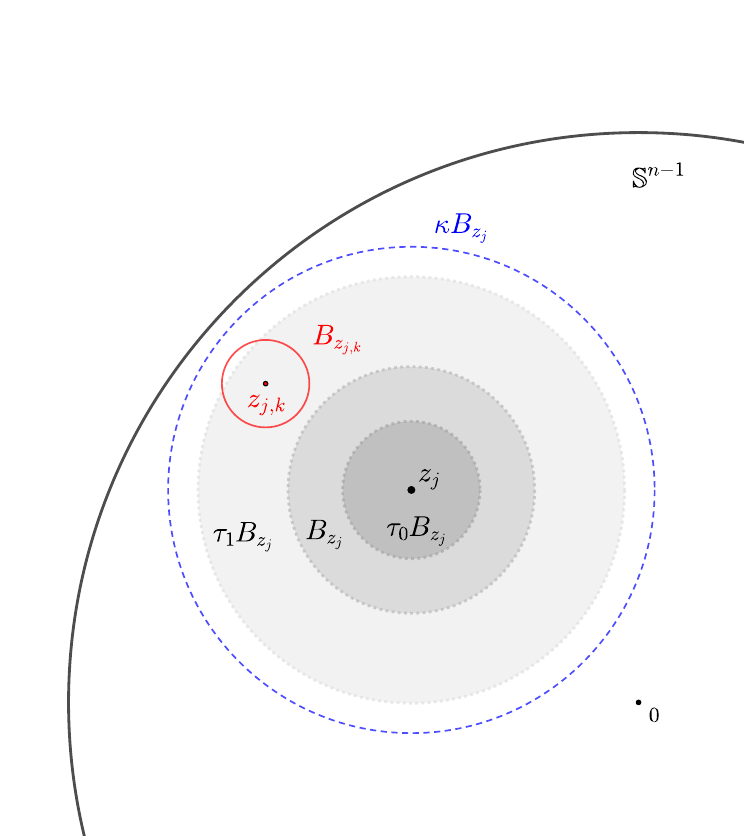}
  \caption{{\footnotesize Coverings $\{ \lambda B_{z_{j,k}}\}_{k=1}^M$ of $\tau_1 B_{z_j}$}.}
  \label{fig:comparisondiagram-right}
\end{minipage}
\end{figure}
%
Therefore,
\begin{equation}
a^p_{f,\lambda_2}(x)\!=\!\left(\frac{1}{|2B_x|}\int_{_{\!\lambda_2B_x}}|Df(y)|^n\ud y\right)^{\frac pn}\!\leq\!\left(\frac{1}{|2B_x|}\sum_{i=1}^N\int_{_{\!\lambda B_{x_i(x)}}}|Df(y)|^n\ud y\right)^{\frac pn}\!\lesssim\!\sum_{i=1}^Na_{f,\lambda }^p(x_i(x)).\label{eq:coveringargument1}
\end{equation}
In the next step we need to consider a family of coverings of balls $B_j$ for each $j\in\N$. Namely, we apply Lemma~\ref{lem:cover-aux} with $x:=z_j$, $\eta_1:= \tau_1$ and $\eta_2:=\lambda$ to cover balls $2\tau_1 B_j=\tau_1 B_{z_j}$ by balls $\{ \lambda B_{z_{j,k}}\}_{k=1}^M$. Similarly to the argument at~\eqref{thm36-aux1}, if $\lambda <\eta$, then $\lambda B_{z_{j,k}}\subset \kappa B_{z_j}$ for all $k$ (see Figure~\ref{fig:comparisondiagram-right}). Therefore, by assumptions and Remark~\ref{u-HI-kappa}, we have that
\begin{equation}\label{eq:harnack1coveringproof}
u(x)\simeq u(y) \quad\mbox{ for all }x\in B_j, y\in \lambda B_{z_{j,k}}.
\end{equation}
On the other hand, let $x\in B_j$ and $i\in \{ 1,\dots , N\}$. Since $x_i(x)\in \tau_1 B_{z_j}$, then there exists $k(i,x)\in\{ 1,\dots M\}$ such that $x_i(x)\in \lambda B_{z_{j,k(i,x)}}$, and by Lemma~\ref{lem:uniscale-balls2} we then have $y\in 4\lambda B_{x_i (x)}$ for all $y\in \lambda B_{z_{j,k(i,x)}}$. Since $4\lambda < \lambda_1 /3$ this implies, by the Harnack estimate~\eqref{eq:Harnacklambda1prime}, that
\begin{equation}\label{eq:harnack2coveringproof}
	a_{f,\lambda}(x_i(x))\lesssim a_{f,\lambda_1}(y),\quad \mbox{ for all }y\in \lambda B_{z_{j,k(i,x)}}.
\end{equation}
Observations~\eqref{eq:harnack1coveringproof} and~\eqref{eq:harnack2coveringproof} together lead to the following inequality:
\begin{align}
\a^p_{f,\lambda}(x_i(x))u(x)&=\frac{1}{|\lambda B_{z_{j,k(i,x)}}|}\int_{\lambda B_{z_{j,k(i,x)}}}\a^p_{f,\lambda}(x_i(x))u(x)\ud y \nonumber \\
	&\lesssim \frac{1}{|B_j|}\int_{\lambda B_{z_{j,k(i,x)}}}\a^p_{f,\lambda_1}(y)u(y)\ud y \nonumber \\
	& \leq \sum_{k=1}^M \frac{1}{|B_j|}\int_{\lambda B_{z_{j,k}}}\a^p_{f,\lambda_1}(y)u(y)\ud y. \label{eq:coveringargument2}
\end{align}
Notice that the comparison $|B_j|\simeq |\lambda B_{z_{j,k(i,x)}}|$ holds by applying the doubling condition, since $1-|z_{j,k}|\geq \frac{2-\tau_1}{2}(1-|z_j|)$. 

Finally, estimates~\eqref{eq:coveringargument1} and~\eqref{eq:coveringargument2} combined together with the bounded overlap of balls $\kappa B_{z_j}$ allow us to complete the proof of Theorem~\ref{thm:afintegrability}, as follows
\begin{align*} 
  &\int_{\B}\a^p_{f,\lambda_2}(x)\,u(x)\ud x \\
   &\phantom{AAA}\leq \sum_{j\in\N}\int_{B_j}\a^p_{f,\lambda_2}(x)\,u(x)\ud x   \\
   &\phantom{AA\,}\underset{\hbox{\tiny{(by~\eqref{eq:coveringargument1})}}}{\lesssim} \sum_{j\in\N} \sum_{i=1}^N \int_{B_j} \a^p_{f,\lambda} (x_i(x))u(x)\ud x \underset{\hbox{\tiny{(by~\eqref{eq:coveringargument2})}}}{\lesssim} \sum_{j\in\N}   \sum_{i=1}^N\int_{B_j}\frac{1}{|B_j|}\sum_{k=1}^M\int_{\lambda B_{z_{j,k}}} \a_{f,\lambda_1}^p (y)u(y)\ud y\\
    &\phantom{AAA}\lesssim N \sum_{j\in\N} \sum_{k=1}^M\int_{\lambda B_{z_{j,k}}} \a_{f,\lambda_1}^p (y)u(y)\ud y\lesssim NM\sum_{j\in\N}\int_{\kappa B_{z_j}} \a_{f,\lambda_1}^p (y)u(y)\ud y \\
    &\phantom{AAA}\lesssim \int_{\B}\a_{f,\lambda_1}^p(y)\, u(y)\ud y.
\end{align*}
\end{proof}

\subsection{Quantitative Harnack inequalities for averaged derivatives}

The Harnack estimates presented in Sections 3.2 and 3.3 result in the following counterpart of Lemma 2.5 in~\cite{ak}. As already mentioned, in consequence of Lemma~\ref{lem:2.5AK} we obtain a robust integral estimates relating powers of norms $|Df|$ and $\a_f$, see Corollary~\ref{cor:lem32}. We call such estimates \emph{the quantitative Harnack inequalities}, since they involve the integral inequalities for the averaged derivatives $\a_{f, \lambda}$ and the function satisfying the pointwise Harnack estimate, see~\eqref{assert1-lem32} and~\eqref{assert2-lem32}.
\begin{lem}\label{lem:2.5AK}
	Let $f:\B\rightarrow \R^n$ be $K$-quasiregular mapping and $u$ a non-negative function satisfying the Harnack inequality:
\[
   u(x)\simeq u(y)\quad \hbox{ for all } x\in\B \hbox{ and }y\in \frac{1}{2}B_x,
 \]  
   with the comparison constant independent of $x$ and $y$. Then for any $\lambda_2'\in (1,2)$ and for some $\lambda_1' \in (0,1)$ we have that for any $q\in (0,n]$ and $p\geq q$ it holds
\begin{align}
& \int_{\B}|Df|^q\a_{f}^{p-q}u\, \Leb\lesssim \int_{\B}\a_{f,\lambda_2^\prime }^pu\, \Leb, \label{assert1-lem32}\\
& \int_{\B}\a_{f,\lambda_1^\prime }^pu\, \Leb\lesssim \int_{\B}|Df|^q\a_{f}^{p-q}u\, \Leb. \label{assert2-lem32}
\end{align}
	Here, the comparison constants depend on $K,n,q$ and $u$.
	
\end{lem}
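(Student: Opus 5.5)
The plan is to reduce both inequalities to local averaging over hyperbolic balls, combined with H\"older's inequality (for $q<n$) and the pointwise Harnack estimates of Lemma~\ref{lem:harnack} and Corollary~\ref{cor:harnack-est}. The common device is Fubini: since $|x-y|<\frac{\mu}{2}(1-|x|)$ gives $1-|x|\approx 1-|y|$, the indicator $\chi_{\lambda B_y}(x)$ is dominated by $\chi_{\lambda' B_x}(y)$ for a slightly larger $\lambda'$ (and conversely). Hence for nonnegative $g$ we get $\int_{\B} g(y)\,\Leb(y)\approx\int_{\B}\frac{1}{|B_x|}\int_{\lambda B_x} g(y)\,\ud y\,\ud x$ up to adjustments of $\lambda$. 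The Harnack property of $u$ on $\frac12 B_x$, propagated to $\lambda B_x$ for any $\lambda<2$ by the chaining argument of Remark~\ref{u-HI-kappa}, lets us move $u$ freely between $x$ and $y$.

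\textbf{Proof of \eqref{assert1-lem32}.} Fix small $\lambda\in(0,1/2)$ and write
\[
\int_{\B}|Df|^q\a_f^{p-q}u \approx \int_{\B}\frac{u(x)}{|B_x|}\int_{\lambda B_x}|Df(y)|^q\a_f^{p-q}(y)\,\ud y\,\ud x.
\]
For $y\in\lambda B_x$, Corollary~\ref{cor:harnack-est}(b) (equivalently \eqref{est2-harnack}) gives $\a_f(y)\lesssim\a_{f,\lambda_2}(x)$ with $\lambda_2<\lambda_2'$. Since $p-q\ge0$, we pull out $\a_{f,\lambda_2}^{p-q}(x)$. By H\"older's inequality with exponent $n/q$,
\[
\frac1{|B_x|}\int_{\lambda B_x}|Df|^q\lesssim\Big(\frac1{|2B_x|}\int_{\lambda B_x}|Df|^n\Big)^{q/n}=\a_{f,\lambda}^q(x)\le \a_{f,\lambda_2}^q(x).
\]
This yields $\int_{\B}\a_{f,\lambda_2}^p u$, and $\a_{f,\lambda_2}\le\a_{f,\lambda_2'}$ pointwise, or Theorem~\ref{thm:afintegrability} if needed. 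Quasiregularity is not even needed for this direction.

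\textbf{Proof of \eqref{assert2-lem32}.} This direction is the main obstacle, because H\"older's inequality goes the wrong way: we need to bound $\a^q_{f,\lambda_1'}=(\text{avg}|Df|^n)^{q/n}$ by $\text{avg}|Df|^q$. Here quasiregularity enters through the reverse H\"older inequality for $|Df|$ (Gehring/Bojarski--Iwaniec, valid for all exponents $q>0$ for quasiregular maps via the Caccioppoli inequality): $\big(\vint_{B}|Df|^n\big)^{1/n}\lesssim_{n,K,q}\big(\vint_{\sigma B}|Df|^q\big)^{1/q}$ for balls with $\sigma B\subset\B$. First split
\[
\a_{f,\lambda_1'}^p(x)=\a_{f,\lambda_1'}^{q}(x)\,\a_{f,\lambda_1'}^{p-q}(x),
\]
and for $y\in\lambda B_x$ use Corollary~\ref{cor:harnack-est}(c), $\a_{f,\lambda_1'}(x)\lesssim\a_{f,\lambda_1}(y)\lesssim\a_f(y)$ (with roles of $x,y$ swapped via Lemma~\ref{lem:uniscale-balls2}), so that $\a_{f,\lambda_1'}^{p-q}(x)\lesssim\a_f^{p-q}(y)$ uniformly on the small ball around $x$. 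Then apply the reverse H\"older inequality on $\lambda_1'B_x$ with enlargement $\sigma B$ contained in a fixed $\mu B_x$, $\mu<1$ (this is why $\lambda_1'$ is chosen small, rather than allowed arbitrary):
\[
\a_{f,\lambda_1'}^q(x)\lesssim \frac{1}{|B_x|}\int_{\mu B_x}|Df(y)|^q\,\ud y.
\]

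The delicate point is that the factor $\a_f^{p-q}(y)$ must be controlled on all of $\mu B_x$, not just on $\lambda B_x$. We therefore choose $\lambda_1'$ and $\mu$ small enough that $\mu B_x$ lies in the region where Lemma~\ref{lem:harnack} applies, possibly after a covering by Lemma~\ref{lem:cover-aux} and re-indexing. We then multiply by $u(x)\approx u(y)$, integrate in $x$, and apply Fubini as above to obtain $\int_{\B}|Df|^q\a_f^{p-q}u$. The constants depend on $n$, $K$, $q$, and the Harnack constant of $u$, as claimed.
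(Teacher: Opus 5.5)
Your argument is correct. It uses the same ingredients as the paper: H\"older/Jensen for \eqref{assert1-lem32}, the reverse H\"older inequality for \eqref{assert2-lem32}, the one-sided pointwise bounds of Lemma~\ref{lem:harnack}, and the Harnack property of $u$. Where it differs is in how you globalize.

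The paper proves local inequalities on small balls, namely $\int_{\lambda B_x}|Df|^q\a_f^{p-q}u\lesssim\int_{\lambda B_x}\a^p_{f,\lambda_2'}u$ and $\int_{\eta B_x}\a^p_{f,\lambda_1'}u\lesssim\int_{\lambda_1 B_x}|Df|^q\a_f^{p-q}u$. It then sums these over a bounded-overlap covering from Lemma~\ref{lem:WhitneyCovering}. To do so it must pass through an intermediate value $\a_{f,\lambda_2}(x)$ (resp.\ $\a_{f,\lambda_1}(x)$) and transport it back to the integration variable. That requires the second inequality in Corollary~\ref{cor:harnack-est}(b) (resp.\ the first in (c)).

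Your Fubini identity $\int_{\B}g\approx\int_{\B}|B_x|^{-1}\int_{\lambda B_x}g(y)\,\ud y\,\ud x$ holds because the weight $\int_{\B}\chi_{\lambda B_x}(y)\,|B_x|^{-1}\,\ud x$ is $\approx_{n,\lambda}1$. It keeps everything at the single point $x$. For \eqref{assert1-lem32} you then need only $\a_f(y)\lesssim\a_{f,\lambda_2}(x)$ on $\lambda B_x$, plus the trivial monotonicity $\a_{f,\lambda}\le\a_{f,\lambda_2}\le\a_{f,\lambda_2'}$. For \eqref{assert2-lem32} you need only $\a_{f,\lambda_1'}(x)\lesssim\a_f(y)$ on $\mu B_x$. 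This route is shorter, dispenses with the covering lemma, and makes the enlargement in the reverse H\"older inequality explicit.

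Your detour through Corollary~\ref{cor:harnack-est}(c), Lemma~\ref{lem:uniscale-balls2} and a possible extra covering is unnecessary. With $\mu=\sigma\lambda_1'$ and $y\in\mu B_x$, one has $\lambda_1'B_x\subset B_y$ as soon as $\lambda_1'<1-\frac32\mu$; this is exactly the left inequality of \eqref{est2-harnack} with $\lambda:=\mu$. Taking, for example, $\sigma=2$ and $\lambda_1'<\frac14$ gives this inclusion and also $\mu<\frac12$. Hence $u(x)\simeq u(y)$ follows directly from the hypothesis on $\frac12 B_x$.
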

Let us remark that, in fact, the assertions~\eqref{assert1-lem32} and~\eqref{assert2-lem32} hold under weaker assumptions due to Lemma~\ref{lem:harnack}, namely that
\smallskip

\noindent (1) $Df$, the Jacoby matrix of $f$, satisfies ${D\!f \in L^n_{loc}(\B)}$,\\
(2) the reverse H\"older inequality holds for $|Df|$.
\smallskip

However, since the lemma in our work is applied only to the case of quasiregular mappings, we formulate it in that setting.

Before presenting the proof of Lemma~\ref{lem:2.5AK} let us discuss one of its consequences. The following 
result has the same assertion as Lemma 2.5 in~\cite{ak} in the setting of quasiconformal mappings. However, the proof of  Lemma 2.5 in~\cite{ak} relies on the Koebe theorem for $\a_f$ (\cite[Lemma 2.3]{ak}), a tool unavailable in the quasiregular setting, in general. Nevertheless, our Harnack estimates for $\a_{f,\lambda}$ in Lemma~\ref{lem:harnack} and flexibility in choosing $\lambda$ allow us to bypass the absence of the full (double-sided) Koebe estimate.

\begin{cor}\label{cor:lem32}
Let $f:\B\rightarrow \R^n$ be $K$-quasiregular mapping and $u$ be a non-negative function satisfying the Harnack inequality:
		\[
		u(x)\simeq u(y)\quad \hbox{ for all } x\in\B \hbox{ and }y\in B_x,
		\]  
		with the comparison constant independent of $x$ and $y$.  
		Then for any $q\in (0,n]$ and $p\geq q$ we have
		\begin{align*}
			\int_{\B}|Df|^q\a_{f}^{p-q}u\, \Leb\simeq \int_{\B}\a_{f}^pu\, \Leb
		\end{align*}
		Here, the comparison constants depend on $K,n,q$ and $u$. 
\end{cor}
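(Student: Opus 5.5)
The plan is to obtain the corollary by sandwiching: combine Lemma~\ref{lem:2.5AK} with the reverse quantitative Harnack inequality, Theorem~\ref{thm:afintegrability}, which lets us move between the parameters $\lambda_1'<1<\lambda_2'$ and the value $\lambda=1$ in $\a_f=\a_{f,1}$.

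The hypotheses match once we note that $u(x)\simeq u(y)$ for $y\in B_x$ implies the same for $y\in\frac12 B_x\subset B_x$. So both Lemma~\ref{lem:2.5AK} and Theorem~\ref{thm:afintegrability} apply to $u$.

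\emph{Upper bound.} Fix any $\lambda_2'\in(1,2)$. By \eqref{assert1-lem32},
\[
\int_{\B}|Df|^q\a_f^{p-q}u\,\Leb\lesssim\int_{\B}\a_{f,\lambda_2'}^p u\,\Leb .
\]
Theorem~\ref{thm:afintegrability}, applied with $\lambda_1=1<\lambda_2=\lambda_2'$, then bounds the right-hand side by $\int_{\B}\a_{f}^p u\,\Leb$.

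\emph{Lower bound.} Let $\lambda_1'\in(0,1)$ be the parameter from Lemma~\ref{lem:2.5AK}. Theorem~\ref{thm:afintegrability} with $\lambda_1=\lambda_1'<\lambda_2=1$ gives
\[
\int_{\B}\a_f^pu\,\Leb\lesssim\int_{\B}\a_{f,\lambda_1'}^pu\,\Leb .
\]
Then \eqref{assert2-lem32} bounds this by $\int_{\B}|Df|^q\a_f^{p-q}u\,\Leb$.

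Chaining the two bounds gives the claimed equivalence. The constants depend on $n,K,q,p,u$ and on the fixed choices of $\lambda_1',\lambda_2'$, which themselves depend only on these data.

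All the real work sits in Lemma~\ref{lem:2.5AK} and Theorem~\ref{thm:afintegrability}, which we assume. The only point needing care is whether Theorem~\ref{thm:afintegrability} covers $\lambda_2=1$ in the lower bound. Its proof treats $\lambda_2\le1$ by monotonicity in $\lambda$: since $\a_{f,1}\le\a_{f,\mu}$ for any $\mu\in(1,2)$, one applies the theorem to the pair $\lambda_1'<\mu$. So the reduction is legitimate, and I expect no genuine obstacle beyond this bookkeeping of parameters.
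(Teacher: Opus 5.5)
Your argument is correct and is essentially the paper's proof: it also sandwiches using Lemma~\ref{lem:2.5AK} and Theorem~\ref{thm:afintegrability}. The only difference is that the paper applies the theorem once, to the pair $\lambda_1'<\lambda_2'$, and then uses the pointwise monotonicity $\a_{f,\lambda_1'}\le\a_f\le\a_{f,\lambda_2'}$, whereas you apply it twice with $1$ as one endpoint; your remark that the theorem's proof covers the case $\lambda_2=1$ by monotonicity is accurate, so that variant is equally valid.
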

\begin{proof} The proof follows directly from Lemma~\ref{lem:2.5AK} and Theorem~\ref{thm:afintegrability}, upon
applying the theorem with $\lambda_2:=\lambda_2'$ and $\lambda_1:=\lambda_1'$.
\end{proof}

\begin{proof}[Proof of Lemma~\ref{lem:2.5AK}]
%
%

Fix $\lambda\in (0,1/2)$ and let $\lambda_2\in (1,2)$ be as in Corollary~\ref{cor:harnack-est}(b), i.e. $\lambda_2 <\lambda_2' \in (1,2)$ and  for all $x\in \B$ and $y\in \lambda B_x$ the estimates~\eqref{cor12:est2} hold: 
\begin{equation}\label{eq:Harnack2lem3.4} 
	 \a_f(y)\lesssim \a_{f,\lambda_2}(x) \lesssim \a_{f,\lambda_2'}(y).
\end{equation}

By applying the second estimate in \eqref{eq:Harnack2lem3.4}, the definition of $\a_f$ and the doubling property of the Lebesgue measure together with Jensen's inequality we get the following lower estimate:
\begin{align*}
	\int_{\lambda B_x}\a^q_{f,\lambda_2^\prime}(y)\,\ud y &\gtrsim_{C_H}  \int_{\lambda B_x}\a^q_{f,\lambda_2}(x)\,\ud y =\int_{\lambda B_x}\left(\frac{1}{|2B_x|}\int_{\lambda_2 B_x}|Df(z)|^n\,\ud z \right)^\frac{q}{n}\,\ud y \nonumber \\
	&\gtrsim_{C_H}  \int_{\lambda B_x}\left( \vint_{\lambda_2 B_x}|Df(z)|^n\,\ud z \right)^\frac{q}{n}\ud y \gtrsim_{C_H} \int_{\lambda B_x} \vint_{\lambda_2 B_x}|Df(z)|^q\,\ud z \nonumber \\
	& \gtrsim_{C_H}\frac{|\lambda B_x|}{|\lambda_2B_x|} \int_{\lambda_2B_x}|Df(z)|^q\,\ud z \gtrsim_{C_H, C_d} \left(\frac{\lambda}{\lambda_2}\right)^{n}  \int_{\lambda B_x}|Df(z)|^q\,\ud z. 
\end{align*} 
Upon multiplying the both sides of the above estimate by $\a_{f,\lambda_2}^{p-q}(x)u(x)$ we obtain the following inequality
	\begin{equation}\label{eq: 3.4(2)}
		\a_{f,\lambda_2 }^{p-q}(x)u(x)\int_{\lambda B_x}|Df(y)|^q\,\ud y 
		\lesssim \a_{f,\lambda_2}^{p-q}(x)u(x)\int_{\lambda B_x}\a_{f, \lambda_2'}^q(z)\,\ud z.
	\end{equation}
Next, we apply the first inequality in~\eqref{eq:Harnack2lem3.4} together with the Harnack estimate for $u$ to obtain that
\begin{equation}	 
\int_{\lambda B_x}|Df(y)|^q(y)\a_{f}^{p-q}(y)u(y)\,\ud y \lesssim \a_{f,\lambda_2 }^{p-q}(x)u(x)\int_{\lambda B_x}|Df(y)|^q\,\ud y. \label{eq: Lemma34-aux}
\end{equation}
In consequence, estimate~\eqref{eq: 3.4(2)} and the second inequality in~\eqref{eq:Harnack2lem3.4} yield the following 
	\begin{align}
	\a_{f,\lambda_2 }^{p-q}(x)u(x)\int_{\lambda B_x}|Df(y)|^q\,\ud y \lesssim \a_{f,\lambda_2}^{p-q}(x)u(x)\int_{\lambda B_x}\a_{f, \lambda_2'}^q(z)\,\ud z \lesssim \int_{\lambda B_x}\a^p_{f,\lambda_2'}(z)u(z)\,\ud z. \label{eq:2.5(1)}
	\end{align}
Hence,~\eqref{eq: Lemma34-aux} and~\eqref{eq:2.5(1)} give the following estimate:
\begin{equation}\label{eq:2.5(2)}
 \int_{\lambda B_x}|Df(y)|^q(y)\a_{f}^{p-q}(y)u(y)\,\ud y \lesssim \int_{\lambda B_x}\a^p_{f,\lambda_2'}(z)u(z)\,\ud z.
\end{equation}
 In order to complete the proof of the assertion~\eqref{assert1-lem32} we appeal to the covering argument. Since the same argument gives us also the second assertion of the lemma, we now turn our attention to~\eqref{assert2-lem32} and complete both proofs in the next step. 
We apply Corollary~\ref{cor:harnack-est}(c) for $\lambda_1'$ small enough to find $\eta\in(0,\frac12)$ and $\lambda_1>\lambda_1'$ such that $\lambda_1<\frac14$ and the following estimate holds 
\begin{equation}
 \a_{f,\lambda_1'}(y)\lesssim \a_{f,\lambda_1}(x)\quad y\in \eta B_x. \label{aux333-lem32}
\end{equation}
 Here, we use $\eta$ instead of $\lambda$ in order to avoid confusion with $\lambda$ in the previous part of this proof.
  
By applying~\eqref{aux333-lem32},  the reverse H\"older estimate, see e.g. (5.2) in the proof of~\cite[Theorem 5.1]{bi} and Theorem 2 in~\cite{in}, we have 
\begin{align}
 \int_{\eta B_x}\a^q_{f,\lambda_1'}(y)\ud y &\lesssim  \int_{\eta B_x}\a^q_{f,\lambda_1}(x)\ud y 
 =\int_{\eta B_x}\left(\frac{1}{|2B_x|}\int_{\lambda_1 B_x}|Df(z)|^n\ud z \right)^\frac{q}{n}\ud y \nonumber \\
&  =  |\eta B_x|\left( \frac{|\lambda_1 B_x|}{|2B_x|}\vint_{\lambda_1 B_x} |Df(z)|^n \ud z \right)^{\frac{q}{n}} \nonumber\\
&\leq_{\tiny(\hbox{H\"older ineq.})} |\eta B_x|\left(\vint_{\lambda_1  B_x} |Df(z)|^{p_0}\ud z \right)^{\frac{q}{p_0}} \nonumber\\
& \lesssim_{\tiny(\hbox{Thm. 2 in~\cite{in}})} \frac{|\eta B_x|}{|\lambda_1 B_x|} \int_{\lambda_1 B_x}|Df(z)|^q\ud z\nonumber \\
&\simeq \int_{\lambda_1  B_x}|Df(z)|^q\ud z. \label{aux2-lem25}
\end{align}
 Here, Theorem 2 in~\cite{in} is applied with $p:=p_0$, the Gehring exponent $p_0>n$, $s:=n$, $r:=q$ and $\sigma=2$. Thus,~\eqref{aux2-lem25} together with~\eqref{aux333-lem32} and Lemma~\ref{lem:harnack} yield
\begin{align}
 \int_{\eta B_x} \a_{f,\lambda_1'}^{p}(y) u(y)\ud y & \underset{\hbox{\tiny{(by~\eqref{aux333-lem32})}}}{\lesssim} \a_{f,\lambda_1}^{p-q}(x)u(x)\int_{\eta B_x} \a^q_{f,\lambda_1'}(y) \ud y \nonumber \\
& \underset{\hbox{\tiny{(by~\eqref{aux2-lem25})}}}{\lesssim} \a_{f,\lambda_1}^{p-q}(x)u(x)\int_{\lambda_1 B_x}|Df(z)|^q\ud z  \nonumber \\
& \underset{\hbox{\tiny{(by~\eqref{est2-harnack})}}}{\lesssim} \int_{\lambda_1 B_x}|Df(z)|^q\a_{f}^{p-q}(z)u(z)\ud z. \label{eq:2.5(3)}
\end{align} 
In order to complete the proof of the lemma we appeal to a covering argument and consider the family of subsets $\{ B_i\}_{i\in\N}\subset\B$ with the following properties:
\begin{itemize}
\item[(C1)] $\B=\underset{i\in\N}{\bigcup}B_i$.
\item[(C2)] $B_i=c B_{x_i}$ for some collection of points $\{ x_i\}_{i\in\N}\subset\B$ and given $0<c<\lambda$ for the parameter $\lambda \in (0,\frac12)$ fixed in the beginning of this proof.
\item[(C3)] $\underset{i\in\N}{\sum}\chi_{\frac{\lambda}{c} B_i}\leq C<\infty$.
	\end{itemize}
The existence of such a covering follows from Lemma~\ref{lem:WhitneyCovering} applied with, in the notation of the lemma, $\eta:=c$ and $\tau:=\frac{\lambda}{c}\in (1,\frac{1}{c})$. In particular, the property (C3) implies that	
$\underset{i\in\N}{\sum}\chi_{\lambda B_{x_i}}\leq C<\infty$.
Then, the covering argument  together with~\eqref{eq:2.5(2)} imply the first assertion of Lemma~\ref{lem:2.5AK}:
\begin{align*}
\int_{\B}|Df(y)|^q\a_f^{p-q}(y)u(y)\ud y &\leq \sum_{i\in\N}\int_{\lambda B_{x_i}}|Df(y)|^q\a_f^{p-q}(y)u(y)\ud y\\
&\lesssim \sum_{i\in\N}\int_{\lambda B_{x_i}}\a_{f, \lambda_2'}^{p}(z)u(z)\ud z= \sum_{i\in\N}\int_{\lambda B_{i}}\a_{f, \lambda_2'}^{p}(z)u(z)\ud z\leq  \int_{\B}\a_{f, \lambda_2'}^{p}(z)u(z)\ud z.
\end{align*}
Similarly, the covering argument applied, in the notation of  Lemma~\ref{lem:WhitneyCovering}, with $c=\eta$ and $\tau=\frac{\lambda_1}{\eta}$ together with~\eqref{eq:2.5(3)} imply the second assertion of the lemma:
\begin{align*}
\int_{\B}\a_{f,\lambda_1'}^{p}(y)u(y)\ud y &\leq \sum_{i\in\N}\int_{\eta B_{x_i}}\a_{f,\lambda_1'
}^{p}(y)u(y)\ud y \\
&\lesssim \sum_{i\in\N}\int_{\lambda_1B_{x_i}}|Df(z)|^q\a_{f}^{p-q}(z)u(z)\ud z \\
& = \sum_{i\in\N}\int_{\frac{\lambda_1}{\eta} B_{i}}|Df(z)|^q\a_{f}^{p-q}(z)u(z)\ud z \\
&\leq \int_{\B}|Df(z)|^q\a_{f}^{p-q}(z)u(z)\ud z.
\end{align*}
This estimate completes the proof of Lemma~\ref{lem:2.5AK}.
\end{proof}

The following observation is a partial counterpart of the quasiconformal Lemma 2.3 in~\cite{ak} for quasiregular mappings.
\begin{lem}\label{lem:2.3AK}
	Let $f:\B\rightarrow\R^n$ be a nonconstant $K$-quasiregular mapping satisfying the multiplicity condition~\eqref{cond-m} with constants $C$ and $0\leq a<n-1$. Then for all $x\in \B$ and $\lambda\in (0,2)$ 
	$$
	 \a_{f,\lambda} (x)(1-|x|) \lesssim d(f(x),\partial f(\B)),
	$$
	with comparison constant depending on $n,\lambda, K$ and $C$.	
\end{lem}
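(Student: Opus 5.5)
The plan is to estimate $\a_{f,\lambda}^n(x)$ by the quasiregular change of variables formula, using that the multiplicity of $f$ on $\lambda B_x$ is uniformly bounded. Fix $x\in\B$ and $\lambda\in(0,2)$. By the distortion inequality $|Df|^n\le KJ_f$ and the change of variables formula for quasiregular maps,
\[
\a_{f,\lambda}^n(x)=\frac{1}{|2B_x|}\int_{\lambda B_x}|Df|^n\,\Leb\le \frac{K}{|2B_x|}\int_{\lambda B_x}J_f\,\Leb=\frac{K}{|2B_x|}\int_{f(\lambda B_x)}N(y,f,\lambda B_x)\,\ud y\le \frac{K\,N(f,\lambda B_x)}{|2B_x|}\,|f(\lambda B_x)|.
\]
The multiplicity factor is controlled by \eqref{eq:Mhyperbolic}. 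This estimate was derived from condition \eqref{cond-m} together with the covering Lemma~\ref{lem:cover-aux}, and it gives $N(f,\lambda B_x)\le C(\lambda,n)$, with a constant depending also on the constant $C$ in \eqref{cond-m}. Since $|2B_x|\simeq_n (1-|x|)^n$, it remains to show that
\[
|f(\lambda B_x)|\lesssim d(f(x),\partial f(\B))^n .
\]

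For this measure bound I would argue as follows. Set $d:=d(f(x),\partial f(\B))$; we may assume $d<\infty$, since otherwise there is nothing to prove. Then $B(f(x),d)\subset f(\B)$. The aim is to show that $f(\lambda B_x)$ is contained in a ball $B(f(x),Md)$ with $M=M(n,K,\lambda,C)$, because then $|f(\lambda B_x)|\le \omega_n M^n d^n$. Suppose instead that some $w\in\lambda B_x$ satisfies $|f(w)-f(x)|\ge Md$. Join $x$ to $w$ by the segment $\gamma\subset\lambda B_x$. Its image $f(\gamma)$ is then a continuum connecting $f(x)$ to the complement of $B(f(x),Md)$. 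On the other hand, there is a point $b\in\partial f(\B)$ with $|b-f(x)|=d$, and $b$ is the limit of $f(x_k)$ with $x_k\to\partial\B$ (this uses that $f$ is open). So in the image, the continua $f(\gamma)$ and the path toward $b$ come very close relative to their diameters, and the condenser between them has large modulus, of order $\log M$ to a negative power, forced as in Teichmüller's or Väisälä's lemma. The $K_O$-inequality
\[
M(f\Gamma)\le K_O\,N(f,\lambda' B_x)\,M(\Gamma)
\]
(Poletsky/Väisälä), applied to curve families contained in a slightly larger ball $\lambda' B_x$ with $\lambda<\lambda'<2$, transfers this to the preimage side. The multiplicity on $\lambda' B_x$ is again bounded by \eqref{eq:Mhyperbolic}. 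On the preimage side the corresponding family joins $\gamma$, which has diameter comparable to $1-|x|$, to a path going to $\partial\B$; using the path lifting for $f$ one has to check that this preimage family has modulus bounded above by a constant depending only on $n$ and $\lambda$. Comparing the two gives an upper bound on $M$.

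The main obstacle is this modulus step. The lifted curves may leave $\lambda' B_x$, so the multiplicity bound must be applied with care. I would first try a simpler route: replace it by Rickman's path-lifting argument combined with the Miniowitz-type or Väisälä estimate $\operatorname{diam} f(B)\lesssim d(f(x),\partial fD)$, localized to the ball $\lambda' B_x$. In this localization, maximal liftings that start in $\lambda B_x$ and stay inside $\lambda' B_x$ are controlled by the bounded local multiplicity. If that fails, I would fall back to the simpler quantity $\operatorname{diam} f(\lambda B_x)\le Md$, obtained from the standard fact that for quasiregular $f$ with bounded multiplicity on $\lambda' B_x$ the normal-domain/$U(x,f,r)$ theory gives $f(U)=B(f(x),r)$ with $\operatorname{diam} U\lesssim$ the local scale.
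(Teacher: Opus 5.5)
The gap is the distortion step itself. Your reduction matches the paper's: the distortion inequality, the change of variables formula and \eqref{eq:Mhyperbolic} reduce the lemma to showing $d_2:=\sup_{z\in\partial(\lambda B_x)}|f(z)-f(x)|\lesssim d:=d(f(x),\partial f(\B))$. That estimate is the real content of the lemma, and you do not prove it. You call it the main obstacle and list three routes, none carried out, and the one you sketch is oriented so that it cannot close.

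You aim for a \emph{large} modulus in the image and a \emph{bounded} modulus upstairs, which means Poletsky's inequality $M(f\Gamma)\le K_I(f)M(\Gamma)$ plus lifting. (The inequality you display, with $K_O N(f,\lambda'B_x)$ on the right, is neither Poletsky's inequality nor the $K_O$-inequality $M(\Gamma)\le K_O(f)N(f,A)M(f\Gamma)$. Also, a quantity of order $(\log M)^{1-n}$ is small, not large.)

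A large image modulus needs a second nondegenerate continuum near $b$, and none need exist. $\R^n\setminus f(\B)$ can be the single point $b$, which has zero $n$-capacity. For example, with $n=2$, $f=\exp\circ\phi$, where $\phi$ maps the disk conformally onto a strip of width slightly above $2\pi$, is $2$-valent with $f(\B)=\R^2\setminus\{0\}$. So your ``path toward $b$'' must be $f(\delta)$ for some $\delta\subset\B$. Complete liftings of image curves then end on $f^{-1}(f(\delta))$, which can come arbitrarily close to $\gamma$ or even meet it. Hence the lifted family has no bound depending only on $n$ and $\lambda$.

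The $U(x,f,r)$ fallback fails too. $f(U)=B(f(x),r)$ is only possible when $B(f(x),r)\subset f(\B)$, i.e.\ $r\le d$. What you need instead is that the $x$-component of $f^{-1}(B(f(x),Md))$ \emph{contains} $\lambda B_x$, which is a lower bound on its size, not $\operatorname{diam}U\lesssim$ local scale.

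The paper closes this step from the domain side, without lifting. The ring $R=2B_x\setminus\overline{\lambda B_x}\subset\B$ has modulus depending only on $n$ and $\lambda$. The paper compares $M_nf(R)\le KM_n(R)$ with Gehring's Teichm\"uller-ring bound for a condenser separating $f(x)$ and a point at distance $d_2$ from $\partial f(\B)$.

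Your own ingredients also work if you reverse the roles. The point $b$ suffices for an \emph{upper} bound in the image (an annulus around $b$), and the lower bound goes in the domain; the multiplicity then enters exactly where you wanted it. Suppose $d_2>2d$, pick $y_0\in\partial(\lambda B_x)$ with $|f(y_0)-f(x)|=d_2$, and fix $\lambda<\lambda'<\lambda''<2$.

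Lift the segment $[f(x),b)$ starting from $x$, and lift the ray from $f(y_0)$ pointing away from $b$ starting from $y_0$. Since $b\notin f(\B)$ and $f$ is bounded on $\overline{\lambda'B_x}$, both maximal liftings leave $\lambda'B_x$. Stopping them at the first exit gives disjoint continua $E_1\ni x$ and $E_2\ni y_0$ with $\operatorname{diam}E_i\ge\frac{\lambda'-\lambda}{2}(1-|x|)$ and $\operatorname{dist}(E_1,E_2)\le\frac{\lambda}{2}(1-|x|)$.

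The Loewner property of the ball gives $M(\Gamma(E_1,E_2;\lambda''B_x))\ge c(n,\lambda,\lambda',\lambda'')$. Every image path joins $\overline B(b,d)$ to $\R^n\setminus B(b,d_2-d)$. So the $K_O$-inequality, with \eqref{eq:Mhyperbolic} applied on $\lambda''B_x$, yields
\[
c(n,\lambda,\lambda',\lambda'')\le K\,C(\lambda'',n)\,\omega_{n-1}\Big(\log\frac{d_2-d}{d}\Big)^{1-n},
\]
and this bounds $d_2/d$.
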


The above estimate does not depend on the parameter $a$ due to the discussion on the multiplicity condition~\eqref{cond-m} on hyperbolic balls, see~\eqref{eq:Mhyperboliclambda} and~\eqref{eq:Mhyperbolic}. In what follows we use Lemma~\ref{lem:2.3AK} in the proof of Theorem~\ref{thm:characterizationsnolder} with $\lambda=1$. See also Remark~\ref{rem-lem23-v2} below for a variant of the lemma under the weaker multiplicity condition~\eqref{precond-m}.

Let us recall that in the case of quasiconformal mappings (and so, in particular for conformal ones), there is also a similar lower estimate for $\a_f$. However, in the lack of injectivity, such an estimate in general may fail, as can be seen in the following example already for analytic functions in the plane, i.e. for $1$-quasiregular maps.

\begin{ex}
Let us consider a holomorphic mapping $f(z)=z^2$ for $z=x+iy$ on the unit ball $\B\subset \R^2$, which is one of the simplest non-injective $1$-quasiregular mappings in the plane. 

By the direct computations we find that $f(z)=(x^2-y^2, 2xy)$ and the Euclidean norm of the Jacobi matrix reads $|Df|=2\sqrt{2}|z|$ and, since $f(\B)=\B$, we have that $d(f(z),\partial f(\B))=1-|f(z)|=1-|z|^2$. Thus, the Koebe distortion inequality
$$ 
 |Df(z)|\leq c \frac{d(f(z),\partial f(\B))}{1-|z|}\, \hbox{ takes the form }\, 2\sqrt{2}|z|\leq c (1+|z|),
$$
 and so it holds for all $z\in \B$ for appropriately large constant $c>0$, e.g. in the classical Koebe distortion inequality for conformal maps in the plane $c=4$, see Theorem 1.6 in~\cite{asge1}. However, if the lower estimate held, it would read: $2\sqrt{2}|z|\geq \frac{1}{c} (1+|z|)$. This fails for all $z$ on a small ball at $0$ with radius depending on $c$. Note that $0$ is the branching point of the quasiregular map $f$.

The similar computations show that $f(z)=z^n$ for $n>1$ also fails the lower estimate in Lemma~\ref{lem:2.3AK}.
\end{ex}
 
\begin{proof}[Proof of Lemma~\ref{lem:2.3AK}] 
We begin with the modulus of curve families argument based on the discussions in the proofs of~\cite[Theorem 1.8]{asge1} and~\cite[Theorem 5.3]{no} and going back to the seminal work~\cite{ge}. Even though the argument is standard for quasiconformal mappings, we provide more details in order to draw the readers attention to the differences between the setting of injective and quasiregular mappings.

For a fixed $x\in \B$ let us consider the following two quantities:
$$
d_1:=d(f(x),\partial f(\B)),\quad d_2 := \sup \{|f(x)-f(z)|\,:\,z\in\partial (\lambda B_x)\}=|f(x)-f(y)|,
$$
where $y$ satisfies $|y-x|=\frac12 \lambda (1-|x|)$ and is a point where the supremum in $d_2$ is attained. Moreover, we may assume that $f(x)\not=f(y)$, as otherwise if for all points $y\in \partial(\lambda B_x)$ it holds that $f(y)=f(x)$, then since the component functions of $f$ satisfy an $\mathcal{A}$-harmonic equation, see~\cite[Theorem 14.42]{hkm}, the strong maximum principle in~\cite[Thorem 6.5]{hkm} implies that $f=const$ on $\lambda B_x$. Then the assertion of Lemma~\ref{lem:2.3AK} holds trivially, since $\a_{f,\lambda}(x)\equiv 0$. 

Next, we consider a ring $R:=2B_x\setminus \overline{\lambda B_x}$ and recall that since $f$ is quasiregular and nonconstant, $f$ is continuous and open mapping in $\B$. Therefore, $f(\lambda B_x)\subset f(2B_x)$ and $\partial f(2B_x)\subset f(\partial( 2B_x))$. Moreover, $f(2B_x)$ is bounded and $\Rn\setminus f(2B_x)$ contains a unique unbounded component with non-empty boundary; also, by the definition of $d_2$ we have $f(\lambda B_x)\subset B(f(x),d_2)$. Let $f(R):=f(2B_x)\setminus \overline{f(\lambda B_x)}$. Then, by Theorem 4 in~\cite{ge}, see also the presentation in~\cite[Chapter 7]{vuo2}, we have for a constant $\gamma_n \in [4, 2e^{n-1})$ for $n>2$ and $\gamma_2=4$, that  
\[
 \frac{1}{M_n f(R)} \leq \ln \gamma_n^2\left(\frac{d_1}{d_2}+1\right),
\]
where the expression on the right-hand side is the estimate for the reciprocal of the $n$-modulus of the Teichm\"uller ring.
Upon combining this estimate with the modulus inequality for quasiregular mappings we have
\[ 
\frac{d_2}{d_1}\lesssim_{\gamma_n^2} \frac{1}{\ln \gamma_n^2\left(\frac{d_1}{d_2}+1\right)}\leq M_n f(R) \leq K M_n(R)   \approx_{\om_{n-1}, K} (\ln 2)^{1-n}.
\]
Hence, by the quasiregular change of variables and the multiplicity growth condition~\eqref{precond-m}
\begin{align*}
  \a_{f,\lambda }(x)^n &= \frac{1}{|2B_x|}\int_{\lambda B_x}|Df|^n \Leb\,
   \leq\, \frac{K}{|2B_x|}\int_{\lambda B_x}J_f \Leb	\\
	&=\frac{K}{|2B_x|}N(f,\lambda B_x)|f(\lambda B_x)|\,\underset{\hbox{\tiny{(by~\eqref{eq:Mhyperbolic})}}}{\leq}
	KC(\lambda, n)  \frac{|B(f(x), d_2)|}{|2B_x|} \\
	&\lesssim\, KC(\lambda, n) \left(\frac{d_2}{1-|x|}\right)^n \lesssim_{\gamma_n^2, K,\lambda, n} \left( \frac{d_1}{1-|x|}\right)^n
\end{align*}
and the assertion of the lemma follows.
\end{proof}

\begin{remark}\label{rem-lem23-v2}
Lemma~\ref{lem:2.3AK} holds with the stronger hypotheses, if we assume the multiplicity condition~\eqref{precond-m} instead of~\eqref{cond-m}, namely that
 $$
	 \a_{f,\lambda} (x)(1-|x|)^{1+\frac{a}{n}}\lesssim d(f(x),\partial f(\B)),
$$
with comparison constant depending on $n,\lambda, K, C$ and $a$. The proof goes the same lines as the the corresponding proof of the lemma, with the last estimate modified as follows:
\begin{align*}
  \a_{f,\lambda }(x)^n & \leq \frac{K}{|2B_x|}N(f,\lambda B_x)|f(\lambda B_x)|\,\leq\, \frac{K}{|2B_x|}N\left(f,B\left(0,\tfrac{\lambda}{2} (1-|x|)\right)\right)|B(f(x),d_2)| \\
	&\leq\frac{K\, C}{\left[ \left(1-\frac{ \lambda}{2}\right)(1-|x|)\right]^a} \frac{|B(f(x), d_2)|}{|2B_x|}\,\lesssim\, \frac{K\, C\left( 1-\frac{\lambda}{2}\right)^{-a}}{ (1-|x|)^a} \left( \frac{d_2}{1-|x|}\right)^n\\
	&\lesssim_{\gamma_n^2, K, n}  K\, C\left( 1-\frac{\lambda}{2}\right)^{-a } \left( \frac{d_1}{(1-|x|)^{1+\frac{a}{n}}}\right)^n.
\end{align*}
\end{remark}

\subsection{Proof of Theorem~\ref{thm:characterizationsnolder}}\label{sect35}

This section contains the core results of our work. In particular, we explain the role of the averaged derivatives $\a_{f,\lambda}$ in the studies of the boundary behavior of quasiregular mappings and their Hardy spaces. Moreover, we connect the geometry of mappings with studies of counterparts of some fundamental notions of harmonic analysis, such as \emph{the square function} and \emph{the non-tangential maximal function}. Indeed, below we consider the integral expressions involving the averaged derivatives $\a_{f,\lambda}$ multiplied by powers of the distance to the boundary function $1-|x|$, which is in the analogy to the square function of a Sobolev functions in $W^{1,2}(\Om,\R)$, e.g.
\[
 S_{\alpha}^2 u(\om):=\int_{\Gamma_{\alpha}(\om)} |\nabla u(x)|^2 \dist(x,\partial \Om)^{1-n}\ud x.
\] 
Our main auxiliary result is a quasiregular counterpart of Lemma 5.5 in~\cite{ak}. However, one can also view the proposition below as a counterpart of the Lusin area integral theorem, see for instance, Fefferman--Stein~\cite{fst}, Mitrea~\cite{mit}, Dahlberg~\cite{dah}, Burkholder--Gundy~\cite{bg}. For example, the second assertion in~\cite[Theorem 8 ]{fst} for a bounded harmonic function $u$ on a half-space reads for all $0<p<\infty$ (cf. Chapters VII.1 and VII.2 in~\cite{stein})
\[
\|\tilde{u}\|_{L^p}\lesssim \|\Nm(u)\|_{L^p} \approx \|S_\alpha u\|_{L^p}
\]
\begin{prop}\label{prop:Lp-norm-bound}
	Let $f:\B\rightarrow \R^n\setminus\{ 0\}$ be a quasiregular mapping in the Miniowitz class~\eqref{est-min} with finite multiplicity $N(f,\B)<\infty$. Then for any $0<p<\infty$, $0<\beta\leq 1$ and $0<\eta <2$. 
\begin{equation}\label{assert:Lp-norm-bound}
	 \int_{\Sn}|\tilde{f}(\om)|^pd\sigma (\om)\leq C \int_{\Sn}v(\om)^pd\sigma (\om),
\end{equation}
where the constant $C$ depends on $n,p, K$ and $\eta$ and the function $v:\Sn \to \R_{+}$ is defined as follows
	$$
	 v(\om):=\Nm\big(\mathrm a_{f,\eta} (x)(1-|x|)^\beta\big)= \displaystyle\sup_{x\in \Gamma (\om)}\mathrm a_{f,\eta} (x)(1-|x|)^\beta\,\hbox{ for a.e. point }\om\in \Sn.
	 $$
\end{prop}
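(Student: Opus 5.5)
The plan is to follow the scheme of Lemma 5.5 in~\cite{ak}. We replace the quasiconformal Koebe theorem by the Koebe estimate~\eqref{est:qr-koebe} of the Observation in Section 3.1, which is available because $N(f,\B)<\infty$. We also use the Harnack-type control of $|f|$ on hyperbolic balls coming from the Miniowitz class~\eqref{est-min}.

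\emph{Reductions.} Since $0<\beta\leq 1$ and $1-|x|\leq 1$, we have $(1-|x|)\leq (1-|x|)^\beta$, so it suffices to treat $\beta=1$. Next, by Lemma~\ref{lem:cover-aux} and the definition of $\a_{f,\eta}$, one has $\a_{f,1}(x)\lesssim \sum_i \a_{f,\eta}(x_i)$ with $x_i\in B_x$ and the number of terms depending only on $n$ and $\eta$. Together with a slight widening of the aperture of $\Gamma$, this lets us work with $\lambda$ fixed (say $\lambda=\min\{\eta,1\}$) in~\eqref{est:qr-koebe}. The resulting pointwise bound is
\[
\diam f(B_x)\lesssim_{n,K,N(f,\B),\eta} \a_{f,\eta}(x)(1-|x|)\leq v(\om) \qquad \hbox{for all } x\in\Gamma(\om).
\]
I note that a constant map has $v\equiv 0$, so~\eqref{assert:Lp-norm-bound} must be read for nonconstant $f$, or with an additive $|f(0)|^p$ term. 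I would track this normalization explicitly.

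\emph{Main step: a good-$\lambda$ or stopping-time bound for $\Nm f$ by $v$.} By Theorem~\ref{thm11-ag2} it suffices to bound $\|\Nm f\|_{L^p(\Sn)}$, since $\|\tilde f\|_p\lesssim \|\Nm f\|_p$. The Miniowitz condition applied to $g_x=f\circ T_x^{-1}$ on $T_x(B_x)\subset B(0,r_0)$ gives $|f(y)|\approx|f(x)|$ for $y\in B_x$. Hence the oscillation estimate above converts into the statement that $f$ moves by at most $Cv(\om)$ across each Whitney ball of the cone $\Gamma(\om)$.

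Fix $s>0$ and consider the open set $\{\Nm f>s\}$. Decompose it into shadows $S(x_k)$ of maximal (stopping) points $x_k$, namely the highest points along cones where $|f|$ first exceeds $s$. Above these stopping points $|f|\leq s$. At the stopping point $x_k$ itself, the oscillation bound forces either $v\gtrsim s$ on a fixed proportion of $S(x_k)$, or $|f|\leq 2s$ near $x_k$.

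This should yield a good-$\lambda$ inequality
\[
\sigma(\{\Nm f>2s,\ v\leq \gamma s\})\leq \epsilon(\gamma)\,\sigma(\{\Nm_{\alpha'} f>s\}),
\]
with $\epsilon(\gamma)\to 0$ as $\gamma\to 0$. To prove it, I would estimate the measure of the set where the cone reaches $|f|>2s$ using the bounded-multiplicity volume estimate $|f(B)|\geq N(f,\B)^{-1}\int_B J_f$. The change of aperture is harmless by the usual Fefferman--Stein comparison of $\Nm_\alpha$ and $\Nm_{\alpha'}$ in $L^p$. Integrating against $p s^{p-1}\,ds$ then gives $\|\Nm f\|_p\lesssim \|v\|_p$, at least when $\|\Nm f\|_p<\infty$.

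\emph{Removing the a priori finiteness.} The assumption $\|\Nm f\|_p<\infty$ is removed by applying the argument to the dilations $f_r(x)=f(rx)$. These are bounded, still in the Miniowitz class, and have multiplicity at most $N(f,\B)$. For them, $v_r\lesssim v$ after the usual cone comparison, so we may let $r\to1$.

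\emph{Expected obstacle.} The main difficulty is the good-$\lambda$ step. One only has a bound on the \emph{oscillation} of $f$ per Whitney ball, and summing these oscillations along a radius diverges. The growth of $|f|$ must therefore be captured through the stopping points, with the multiplicity used to compare the image volume $|f(\cdot)|$ with $\int J_f$. If this fails, I would fall back on comparing $\Nm f$ with a Lusin-type area integral of $|f|^{p-1}|Df|$ (cf.\ Corollary~\ref{cor-thm-nolder}(3)) and use Lemma~\ref{lem:2.5AK} with $u=(1-|x|)^{p-1}$ to pass to $\a_f$.
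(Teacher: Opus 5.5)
\textbf{Gap: the good-$\lambda$ inequality is announced, not proved.} Your skeleton is the paper's. You reduce to $\beta=1$, bound the oscillation of $f$ on Whitney balls of $\Gamma(\om)$ by $v(\om)$, and Whitney-decompose $\{\Nm f>s\}$ with a point above each piece where $|f|\le s$. You then run a good-$\lambda$ argument and remove a priori finiteness by dilations. For the oscillation bound the paper uses the Morrey estimate, Proposition~\ref{lem:Morrey}, which needs no multiplicity; the top point is Lemma~\ref{lem:leavingshadow}. The good-$\lambda$ inequality is the whole content of the proposition and the only place where $N(f,\B)<\infty$ enters. It must enter: for the lacunary Bloch function $\sum_k z^{2^k}$ in the disc, $v$ is bounded but the function lies in no $H^p$. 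Your hint, ``use $|f(B)|\ge N(f,\B)^{-1}\int_B J_f$'', gives no smallness on its own. For each bad $\om\in E_k\subset S(x_k)$, take a chain of balls $B=\frac{\eta}{2}B_{z_B}$ in $\Gamma(\om)$, running from the top point to the first point where $|f|>2s$. Use $\diam f(B)^n\lesssim\int_{\eta B_{z_B}}J_f$ (Morrey), and apply Fubini with $\sigma(S(z))\approx(1-|z|)^{n-1}$. This gives
\[
\int_{E_k}\sum_{B\in{\rm ch}(\om)}\frac{\diam f(B)^n}{(1-|z_B|)^{n-1}}\,\ud\sigma(\om)\lesssim\int_{\Omega_k}J_f\le N(f,\B)\,|B(0,2s)|,
\]
where $\Omega_k$ is the union of the enlarged balls used before stopping, on which $|f|\le 2s$. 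However, H\"older along one chain with $\sum_B\diam f(B)\ge s/2$ only bounds the inner sum below by $s^n(1-|x_k|)^{1-n}$. That yields $\sigma(E_k)\lesssim N\sigma(S(x_k))$, with no gain.

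\textbf{What is missing.} You must use the cap $\diam f(B)\le C\gamma s$ that comes from $v\le\gamma s$. The first $\approx 1/(4C\gamma)$ dyadic generations below $x_k$ can then carry at most $s/4$ of the growth. So at least $s/4$ accrues at depth $\gtrsim 1/\gamma$, where the weight $(1-|z_B|)^{1-n}$ is larger by a factor $2^{c(n-1)/\gamma}$. This gives $\epsilon(\gamma)\lesssim N(f,\B)\,2^{-c/\gamma}$ and closes your argument. The alternative is the paper's route.

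\textbf{Comparison with the paper.} Once completed, your route differs genuinely from the paper's. The paper puts $\tilde f$ on the left of its good-$\lambda$ inequality. It bounds $\{\om\in S(x_i):|\tilde f(\om)-f(x_i)|\ge M(1-|x_i|)\a_{f,\eta}(x_i)\}$ by Lemma~\ref{lem:shadowmeasure}, a modulus estimate for radial curves versus the image ring, which uses $\mathrm{Mod}\,\Gamma\le N\,\mathrm{Mod}\,f\Gamma$. That gives only $C(\log M)^{1-n}$, but it needs no control of $v$ below $x_i$. Because its left side is $\tilde f$ and its right side is $\sigma(\{\Nm f>\lambda\})$, the paper must call on Theorem~\ref{thm11-ag2} to absorb $\|\Nm f_t\|_p$ into $\|\tilde f_t\|_p$. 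Your version has nontangential maximal functions on both sides, with the two apertures you anticipate. Absorption then needs only the aperture comparison, boundedness of $f_r$, and $v_r\le v$. The last holds because $\a_{f_r,\eta}(x)(1-|x|)\le \a_{f,\eta}(rx)(1-|rx|)$ and $rx\in\Gamma(\om)$. So your route bypasses Theorem~\ref{thm11-ag2} and uses \eqref{est-min} only for the existence of $\tilde f$. Your fallback would not help: Lemma~\ref{lem:2.5AK} leads to $\int_{\B}\a_f^p(1-|x|)^{p-1}$, which is condition (2) of Theorem~\ref{thm:characterizationsnolder}. That quantity dominates $\|v\|_p^p$, but it is not controlled by it.

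\textbf{Your normalization caveat.} It is right in substance, but the constant map is the wrong witness, since it has $N(f,\B)=\infty$. Take $f(x)=c+\epsilon x$ with $|c|\ge 2\epsilon$. It is conformal, injective, omits $0$ and lies in \eqref{est-min}. Yet $\|\tilde f\|_{L^p(\Sn)}\approx|c|$ while $v\le \epsilon\eta/2$. So \eqref{assert:Lp-norm-bound} genuinely needs an additive term such as $C|f(0)|^p$. The paper's proof hides this in Lemma~\ref{lem:leavingshadow}, which requires $U(\lambda)\ne\Sn$ and fails for $\lambda<\inf_{\B}|f|$. In your integration, treat $s\le |f(0)|$ trivially; for $s>|f(0)|$, allow $x_k=0$ as a top point.
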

Notice, that since by assumptions of Proposition~\ref{prop:Lp-norm-bound} the multiplicity $N(f,\B)<\infty$, the condition~\eqref{cond-m} trivially holds for any $a\geq 0$ and the constant $C:=N(f,\B)$.

The proof of Proposition~\ref{prop:Lp-norm-bound} is based on the following two technical lemmas whose proofs are presented in Appendix~\ref{appB}. First result corresponds to a number of results for quasiconformal mappings, see Lemma 4.2 in~\cite{ak}, Lemmas 4-5 in~\cite{z} and also Lemma 5.16 in~\cite{af} in the setting of the Heisenberg group $\mathbb{H}_1$.
Despite its technical nature, the result below is the level-sets type estimate which involves not only the map but also its averaged derivatives. Moreover, it leads to the good-lambda estimate, see the proof of Proposition~\ref{prop:Lp-norm-bound}. Such estimates are crucial in the harmonic analysis in the \emph{N/S} inequalities relating the non-tangential maximal functions and the area/square functions, see e.g.~\cite{dah, hmm, mit}.

\begin{lem}\label{lem:shadowmeasure}
	Let $f:\B\rightarrow \R^n$ be a $K$-quasiregular in the Miniowitz class~\eqref{est-min} with finite multiplicity $N(f,\B )<\infty $. Let $\beta ,\eta\in (0,1]$ and $M>1$ be sufficiently large. Then for every $x\in\B$ we have
$$
 \sigma\left(\left\{ \om \in S(x): |f(x)-\tilde{f}(\om)|\geq M(1-|x|)^\beta\mathrm a_{f,\eta} (x)\right\}\right)\leq C(n,K,\eta) \sigma (S(x))(\log M)^{1-n}.
$$
\end{lem}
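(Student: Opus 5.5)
The plan is to follow the quasiconformal argument of Lemma 4.2 in~\cite{ak}, using the Koebe-type estimate \eqref{est:qr-koebe} from the finite multiplicity Observation in place of the quasiconformal Koebe theorem.

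\textbf{Step 1: reduction to $\diam f(B_x)$.} Fix $x\in\B$ and set $d:=\diam f(B_x)$. By the left inequality in \eqref{est:qr-koebe}, with $\lambda=\eta\le 1$,
\[
d\lesssim_{n,K,N(f,\B),\eta}(1-|x|)\,\a_{f,\eta}(x)\le (1-|x|)^\beta\a_{f,\eta}(x),
\]
where the last step uses $\beta\le1$ and $1-|x|\le 1$. It therefore suffices to prove
\[
\sigma\big(\{\om\in S(x): |f(x)-\tilde f(\om)|\ge M' d\}\big)\lesssim \sigma(S(x))(\log M')^{1-n},
\]
with $M'=M/c$. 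Since $\log(M/c)\approx \log M$ for $M$ large, this gives the claimed bound.

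\textbf{Step 2: separating continua.} Let $E$ be the set in the display above. For each $\om\in E$ consider the radial segment $\gamma_\om$ from the point $x_\om:=|x|\om$ (or a point of $\frac12 B_x$ on that radius) to $\om$. Because $\om\in S(x)$, the point $x_\om$ lies in a fixed dilate $cB_x$, and by Harnack-type chaining (Lemma~\ref{lem:cover-aux}) we have $|f(x_\om)-f(x)|\lesssim d$. Hence $f(\gamma_\om)$ joins $B(f(x),Cd)$ to the complement of $B(f(x),M'd/2)$. By Step 1 these curves are nontrivial. The family $f(\Gamma_E)$, $\Gamma_E:=\{\gamma_\om:\om\in E\}$, crosses the spherical ring $A=\{Cd<|y-f(x)|<M'd/2\}$, so
\[
M_n(f\Gamma_E)\le \om_{n-1}\big(\log \tfrac{M'}{2C}\big)^{1-n}.
\]

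\textbf{Step 3: pulling the modulus back.} Since $f$ has finite multiplicity, the K\"uhnau/V\"ais\"al\"a inequality (Poletsky-type inequality with multiplicity; see~\cite{Ric}) gives
\[
M_n(\Gamma_E)\le K\,N(f,\B)\,M_n(f\Gamma_E).
\]
Note that the inequality $M_n(f\Gamma)\le K_I M_n(\Gamma)$ goes the wrong way for this purpose, so the multiplicity version is essential; this is the only place finite multiplicity enters besides Step 1.

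\textbf{Step 4: lower bound for $M_n(\Gamma_E)$.} For radial segments of length comparable to $r:=1-|x|$, starting at distance $r$ from $\Sn$ over a set $E$ of the sphere of size about $r^{n-1}$, a standard H\"older estimate gives
\[
M_n(\Gamma_E)\gtrsim \sigma(E)/\sigma(S(x)).
\]
Combining Steps 2--4 yields $\sigma(E)\lesssim \sigma(S(x))(\log M)^{1-n}$.

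\textbf{Main obstacle.} The main obstacle is making Step 2 rigorous. One must check that $\tilde f(\om)$ exists and equals the endpoint limit along $\gamma_\om$, which can be handled by restricting to a.e.\ $\om$ via the radial-limit theorem of~\cite{kmv}. One must also control $|f(x_\om)-f(x)|$ by $d$ uniformly. If the start points leave $B_x$, I would instead start $\gamma_\om$ at points of a fixed sphere $\{|y|=|x|\}\cap cB_x$ and bound the oscillation of $f$ on $cB_x$ by $\diam f(B_x)$. For this I would use the doubling estimate \eqref{Jacobian-dbl2} together with \eqref{est:qr-koebe} applied at nearby centers.
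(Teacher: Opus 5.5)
Your argument is sound in outline, but it takes a different route from the paper. The paper first proves the bound at $x=0$. There, Proposition~\ref{lem:Morrey} (Sobolev embedding plus Gehring, valid for every quasiregular map) gives $f(B(0,\eta/4))\subset B(f(0),C\a_{f,\eta}(0))$. The family of curves joining $B(0,\eta/4)$ to $E$ has modulus $\sigma(E)(\log\frac4\eta)^{1-n}$, and the same multiplicity modulus inequality you use in Step 3 finishes the argument. A general $x$ is then reduced to the origin via $g=f\circ T_x^{-1}$, using the bound $\a_{g,\tilde\eta}(0)\le(1-|x|)\a_{f,\eta}(x)$, the distortion estimate for $T_x$ on $S(x)$ from~\cite{z}, and $\sigma(S(x))\approx(1-|x|)^{n-1}$.

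You instead work directly at $x$ with radial segments. This avoids M\"obius maps entirely and needs only radial limits; the paper's transported curves are circular arcs orthogonal to $\Sn$, so it tacitly uses that limits along them agree with radial limits. The price is that your segments start at distance comparable to $1-|x|$ from $x$, outside the ball $\frac{\eta}{2}B_x$ on which Morrey controls the oscillation. You therefore need the Koebe estimate \eqref{est:qr-koebe} and a Harnack chain (via Corollary~\ref{est-harnack-N}, i.e.\ the doubling \eqref{Jacobian-dbl2}). So finite multiplicity enters your proof also through the Observation, whose proof rests on~\cite{hk}; in the paper it enters only through the modulus inequality.

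Two details need fixing.

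First, Step 4 is false as written for $x$ near the origin. The modulus of the segments $[|x|\om,\om)$, $\om\in E$, is exactly $\sigma(E)(\log\frac{1}{|x|})^{1-n}$. This tends to $0$ as $|x|\to 0$, while $\sigma(S(x))\approx 1$ there; at $x=0$ all segments leave one point and the family has modulus zero. For $|x|\le\frac12$, start the segments on the sphere $\{|y|=\frac12\}$ instead. Then $\log 2$ replaces $\log\frac1{|x|}$, and your chaining still gives $|f(\frac12\om)-f(x)|\lesssim d$. Your parenthetical alternative (a point of $\frac12 B_x$ on the radius through $\om$) does not work in general: for $x$ near $\Sn$, most radii through $S(x)$ miss $\frac12 B_x$.

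Second, the chain from $x$ to $x_\om$ should run along the sphere $\{|y|=|x|\}$. The relevant piece of that sphere has hyperbolic diameter bounded in terms of $\alpha$, because $|x_\om-x|<(2+\alpha)(1-|x|)$ and every point on it is at distance $1-|x|$ from $\Sn$. By contrast, the dilate $cB_x$ with $c=2(2+\alpha)$ is not contained in $\B$, so ``the oscillation of $f$ on $cB_x$'' is not the right quantity to control.
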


The map $f$ is assumed to be in the Miniowitz class~\eqref{est-min} only so that together with the condition $N(f,\B )<\infty $ we may infer that the non-tangential limit map $\tilde{f}$ exists, but~\eqref{est-min} is not used in the proof of Lemma~\ref{lem:shadowmeasure}. 

Next result is a geometric observation about the shadow of points.

Below, by $d(x, y)$ we denote the intrinsic metric in $\Sn$ between points $x,y\in \Sn$.

\begin{lem}\label{lem:leavingshadow}
	Let set $U\subsetneq \Sn$ be closed. For a given parameter $c>0$, let $E$ denote the following set:
	\[
	 E\subset \{x\in \B : d(S(x),\Sn\setminus U)\simeq_{c} 1-|x| \}.
	\]
	Then there exists $\tau_0>1$ such that for every $x\in E$ there exists $y_x\in \B$ with the two properties:
	\begin{align}
	&(1)\,\,S(y_x)\cap (\Sn\setminus U)\neq\emptyset,  \label{prop1:lem-shadow}\\
	&(2)\,\,y_x \in \overline{\tau_0 B_x\cap \B}. \label{prop2:lem-shadow}
	\end{align}
\end{lem}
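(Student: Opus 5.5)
The plan is to move radially inward from $x$ until the shadow grows enough to reach $\Sn\setminus U$. Throughout, the condition $d(S(x),\Sn\setminus U)\simeq_c 1-|x|$ is read as the upper bound $d(S(x),\Sn\setminus U)\le c(1-|x|)$; this is the only part used.

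\textbf{Step 1 (a point of $\Sn\setminus U$ near the shadow).} Fix $x\in E$ and write $x=|x|\xi$ with $\xi\in\Sn$; if $x=0$ take $\xi$ arbitrary. First I would record that $\xi\in S(x)$, since $|x-\xi|=1-|x|<(1+\alpha)(1-|x|)$. Every point $\om\in S(x)$ satisfies $|\om-x|<(1+\alpha)(1-|x|)$, so $|\om-\xi|<(2+\alpha)(1-|x|)$. Hence the Euclidean diameter of $S(x)$ is $\lesssim_\alpha 1-|x|$, and intrinsic and chordal distances on $\Sn$ are comparable. Since $U\neq\Sn$ and $U$ is closed, the set $\Sn\setminus U$ is nonempty and open. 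Thus there is $\om_0\in\Sn\setminus U$ with $|\om_0-\xi|\le A(1-|x|)$, where $A=A(c,\alpha)$; for instance $A=\pi(c+1)+2+\alpha$ works.

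\textbf{Step 2 (choice of $y_x$).} Set $y_x:=t\xi$ with $1-t=\tau(1-|x|)$ for a constant $\tau\ge1$ to be fixed, capped by taking $t=\max\{t,0\}$. I want $\om_0\in S(y_x)$, i.e.
\[
|\om_0-y_x|<(1+\alpha)(1-|y_x|).
\]
I would estimate
\[
|\om_0-y_x|\le|\om_0-\xi|+|\xi-y_x|\le A(1-|x|)+\tau(1-|x|).
\]
It therefore suffices that $A+\tau<(1+\alpha)\tau$, i.e. $\tau>A/\alpha$. Choose $\tau:=A/\alpha+1$, which gives property~\eqref{prop1:lem-shadow}.

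\textbf{Step 3 (the cap at the origin).} If $\tau(1-|x|)\ge1$, put $y_x=0$. Then $S(0)=B(0,1+\alpha)\cap\Sn=\Sn$, which meets $\Sn\setminus U\neq\emptyset$, so property~\eqref{prop1:lem-shadow} still holds.

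\textbf{Step 4 (property~\eqref{prop2:lem-shadow}).} In both cases $|y_x-x|\le\tau(1-|x|)$. Taking $\tau_0:=2\tau+1>1$ gives
\[
y_x\in B\big(x,\tfrac{\tau_0}{2}(1-|x|)\big)\cap\B=\tau_0B_x\cap\B,
\]
so in particular $y_x\in\overline{\tau_0B_x\cap\B}$. The constant $\tau_0$ depends only on $c$ and $\alpha$, not on $x$.

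The main obstacle is Step 1: one must make sure the comparability constant in the definition of $E$ yields a uniform $A$. This needs only the upper bound, together with the comparison between the intrinsic distance and the chordal distance on $\Sn$, which holds with factor $\pi/2$.
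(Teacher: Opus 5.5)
Your proof is correct and follows essentially the same route as the paper. Like the paper, you push $x$ radially inward by a fixed multiple of $1-|x|$, use only the upper bound $d(S(x),\Sn\setminus U)\lesssim 1-|x|$ plus the triangle inequality to place a point of $\Sn\setminus U$ in the new shadow (which forces $\tau\gtrsim C/\alpha$), and fall back to $y_x=0$ when the inward point would pass the origin. Your version is slightly tidier than the paper's on three points: you use a near-minimizing pair instead of assuming the distance to the open set $\Sn\setminus U$ is attained, you get the strict inequality required by the open shadow, and you cover $x=0$ through the cap rather than excluding it.
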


See Figure~\ref{fig3-right} for the illustration of Lemma~\ref{lem:leavingshadow}.

\begin{proof}[Proof of Proposition~\ref{prop:Lp-norm-bound}]
It suffices to prove the proposition for $\eta \in (0,\frac12)$, since if $\eta'\in [\frac12, 2)$ then it immediately holds that $\a_{f,\eta}<\a_{f,\eta'}$ and so, the function $v$ in the right-hand side of~\eqref{assert:Lp-norm-bound}, corresponding to $\a_{f,\eta}$, can be trivially estimated from above by the function $v$ corresponding to $\a_{f,\eta'}$.
\begin{figure}
\centering
\begin{minipage}{.5\textwidth}
  \centering
  \includegraphics[trim=1.5cm 0.8cm 1.5cm 1cm, clip, scale=0.7]{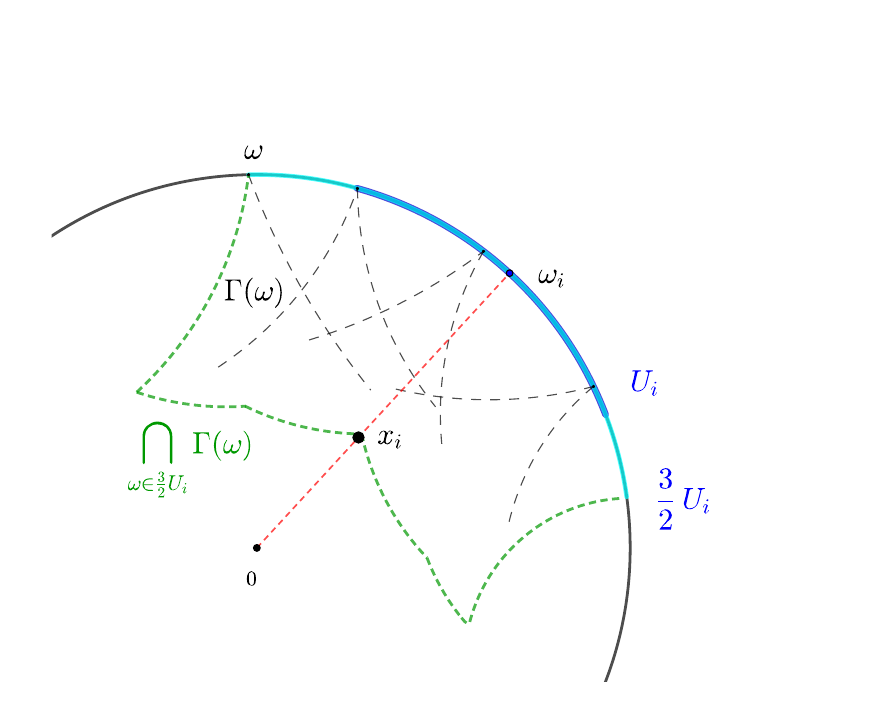}
  \caption{{\footnotesize Construction of point $x_i$ in Proposition~\ref{prop:Lp-norm-bound} for $n=2$}.}
  \label{fig3}
  \end{minipage}%
\begin{minipage}{.5\textwidth}
  \centering
  \includegraphics[trim=1.5cm 1.1cm 1.5cm 1.8cm, clip, scale=0.8]{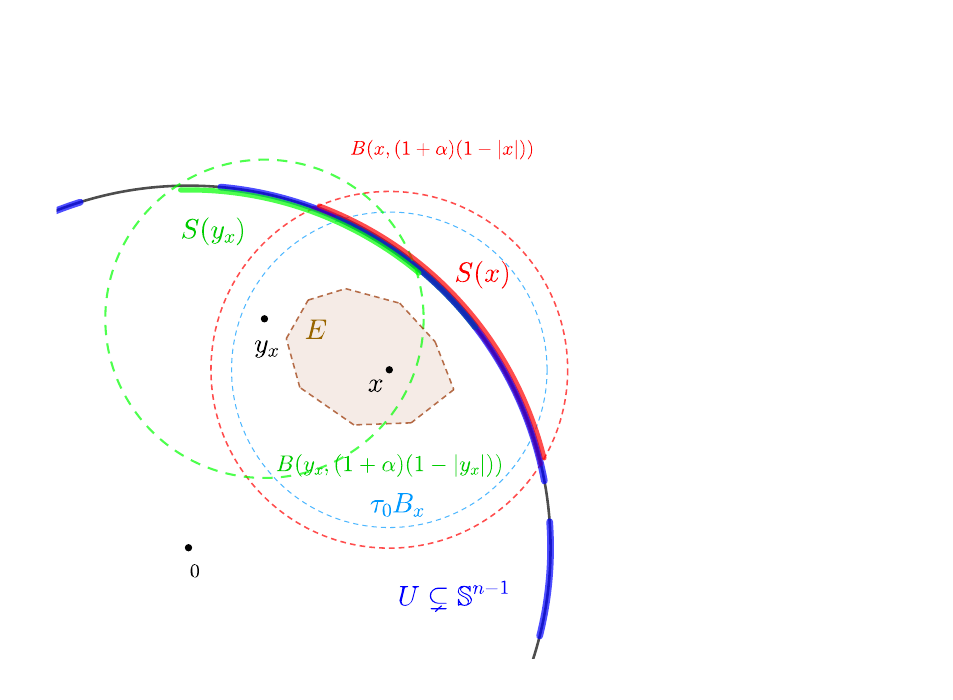}
  \caption{{\footnotesize {Points $y_x$ and set $E$} in Lemma~\ref{lem:leavingshadow} for $n=2$}.}
  \label{fig3-right}
\end{minipage}
\end{figure}	

For any $\lambda>0$ consider the following superlevel sets of the nontangential maximal function $\Nm f$ 
$$
 U(\lambda)=\{\om \in \Sn: \Nm f(\om)>\lambda \}.
$$
Given $\{ U_{i}\}_{i=1}^\infty$ a Whitney covering of $U(\lambda )$ given by Lemma~\ref{lem:WhitneyCovering} with $\eta \in (0,\frac12)$ and $\tau =2$. Denote by $\om_i$ the center of the hyperbolic ball $U_i$ and define the point $x_i$ as follows:
$$
 x_i:=\om_i\bigg(1-\inf\Big\{ |x-\om_i|: x\in \bigcap_{\om\in \frac{3}{2}U_i} \Gamma (\om)\Big\}\bigg).
$$
 See Figure~\ref{fig3}, in particular note that the set $\bigcap_{\om\in \frac{3}{2}U_i} \Gamma (\om)$ corresponds to the area bounded by green dotted curve and the part of $\Sn$. By the definition we have $U_i\subset S(x_i)\subset 2U_i$ for all $i\in\N$. In particular, $\{ S(x_i)\}_{i\in\N}$ is a covering of $U(\lambda )$ and sets $S(x_i)$ have uniformly bounded overlap; moreover, $d(S(x_i),\Sn\setminus U(\lambda ))\leq C(1-|x_i|)$.
	
	Fix $\gamma >0$ and suppose that $\om \in S(x_i)$ is such that $v(\om)<\gamma$. By Lemma~\ref{lem:leavingshadow}, applied for $x=x_i$ and $U:=U(\lambda)$, there exists $y_0=y_0(x_i)$ contained in the line segment joining $0$ and point $x_i$, denoted by $[0, x_i]$, with $S(y_0)\setminus U(\lambda )\neq\emptyset$; moreover, $y_0 \in\tau_0 B_{x_i}$, where $\tau_0>0$ is independent of $x_i$. Let further $\{y_1,\ldots, y_N\}$ be a collection of points in $[y_0, x_i]$, such that $y_N=x_i$, $|y_k|<|y_{k+1}|$ and $|y_{k}-y_{k-1}|\leq \frac{\eta}{4}(1-|x_i|)$ for all $k=0,\ldots, N-1$. Notice that it is possible to set $N=\left[\frac{4\tau_0}{\eta }\right] +1$ since $|x_i-y_0|\leq \tau_0 (1-|x_i|)$ and the collection of points is chosen within the segment $[y_0,x_i]$. We have, in particular, that $1-|y_k|< 1-|x_i|$, and thus $y_{k+1}\in \frac\eta 2B_{y_{k}}$,  for all $k=0,\ldots ,N-1$. Furthermore, we also obtain that $S(y_{k+1})\subset S(y_k)$ for all $k$ and then, if necessary, we may remove some of the first elements in $\{y_1,\ldots, y_N\}$ in order to obtain that $y_k\in U(\lambda)$ for all $k\geq 1$, while maintaining all other properties.
	Since $S(y_0) \setminus U(\lambda)\neq\emptyset$ there exists $\om_0\in S(y_0)\setminus U(\lambda )$ and so it holds that  
$$
 |f(y_0)|\leq \Nm f(\om_0)\leq \lambda
$$
 On the other hand, since $y_{k+1}\in \frac\eta 2 B_{y_{k}}$ Lemma \ref{lem:Morrey} implies that
$$
 |f(y_{k+1})-f(y_{k})|\leq C (1-|y_k|)^\beta \a_{f,\eta }(y_k)\leq C v(\om)\leq C \gamma, \quad k=0,\dots ,N-1.
$$
Here we also employ an observation that $S(x_i)\subset S(y_k)$, and so $y_k\in \Gamma (\omega )$ for all $\omega\in S(x_i)$.

By both of the above estimates we then have 
	\begin{align*}
		|f(x_i)|&\leq |f(y_0)|+\sum_{k=1}^{N_i}|f(y_k)-f(y_{k-1})|\leq \lambda +CN\gamma.
	\end{align*}
We are now in a position to show a good-$\lambda$ type estimate. Suppose now that $\om\in S(x_i)$ satisfies $|\tilde{f}(\om)|>2\lambda $ and $v(\om)\leq \gamma$, where $\lambda =(M+1)CN\gamma $. 
Then 
$$
 |\tilde{f}(\om)-f(x_i)|\geq |\tilde{f}(\om)|-|f(x_i)|\geq 2\lambda - (\lambda +CN\gamma )=MCN\gamma \geq M v(\om)\geq M (1-|x_i|)^\beta \mathrm a_{f,\eta }(x_i).
$$
This, together with Lemma \ref{lem:shadowmeasure} give
\begin{align*}
	\sigma (\{ \om \in S(x_i): |\tilde{f}(\om)|>2\lambda ,v(\om)\leq\gamma \})
		&\leq  \sigma (\{ \om\in S(x_i):|\tilde{f}(\om)-f(x_i)|\geq M (1-|x_i|)^\beta \mathrm a_{f,\eta }(x_i)  \})\\
		&\leq  C\sigma (S(x_i))(\log M)^{1-n}.
\end{align*}
Notice that $|\tilde{f}(\om)|>2\lambda$ implies that $|\Nm f(\om)|>2\lambda$ by the definition of the nontangential limit function $\tilde{f}$, and thus $\om\in U(\lambda )$. Then we have
\begin{align*}
\sigma (\{\om\in \Sn\!:\! |\tilde{f}(\om)|>2\lambda \})\leq &\, \sigma (\{ \om\in U(\lambda ): |\tilde{f}(\om)|>2\lambda,\, v(\om)\leq \gamma \}) +\sigma (\{ \om\in \Sn: v(\om) >\gamma  \}) \\
\leq\; &C\sum_{i\in\N}\sigma (S(x_i))(\log M)^{1-n} +\sigma (\{ \om\in \Sn: v(\om) >\gamma  \}) \\
\leq\; &C\sigma (U(\lambda ))(\log M)^{1-n} +\sigma (\{ \om\in \Sn: v(\om) >\gamma  \}) .
\end{align*}
Since $\gamma =\lambda /(CN(M+1))$, we compute the $L^p$-norm of $f$ by integrating over the superlevel sets and employing the above estimate:
\begin{align*}
\int_{\Sn}|\tilde{f}(\om)|^p \ud \om & = \int_{0}^{\infty} p \lambda^{p-1} \sigma (\{\om\in \Sn\!:\! |\tilde{f}(\om)|>\lambda \} ) \ud \lambda \\
& \leq C(\log (M))^{1-n}\int_{\Sn}|\Nm f(\om)|^p \ud \om + M^pCN^p\int_{\Sn}v(\om)^p \ud \om.
\end{align*}
	
By considering rescalings $f_t(x):=f(tx)$, for $0<t<1$ we may assume that the integrals $\int_{\Sn} |\tilde{f_t}|^p \ud\sigma$ and $\int_{\Sn}|\Nm f_t|^pd\sigma$ are both finite, in which case, by Theorem~\ref{thm11-ag2} (cf. the implication (2)$\Rightarrow$(3) in~\cite[Theorem 1.1]{ag2}) we have
$$
\int_{\Sn}|\Nm f_t(\om)|^p \ud\sigma(\om) \leq C\int_{\Sn}|\tilde{f_t}(\om)|^p \ud\sigma(\om).
$$
Finally, by taking $M$ sufficiently large and letting $t\to 1$, we arrive at the assertion
$$
\int_{\Sn}|\tilde{f}(\om)|^p\ud\sigma(\om) \leq C\int_{\Sn} v^p(\om) \ud\sigma(\om).
$$
\end{proof}

We now turn our attention to Theorem~\ref{thm:characterizationsnolder}, the main result of this section and a generalization of \cite[Theorem 5.1]{ak}. Both results relate the $L^p$ integrability of the non-tangential limit map to the integrability of the square function of a map and the non-tangential maximal functions. It is perhaps surprising, but it turns out that the strength of our result depends whether the multiplicity of the quasiregular mapping in the unit ball is bounded or not. In the first case, we obtain the equivalences, whereas the case of possibly unbounded multiplicity allows us to show only implications $(1) \Rightarrow (2) \Rightarrow(3)$ between the hypotheses of Theorem~\ref{thm:characterizationsnolder}.

Before proving Theorem~\ref{thm:characterizationsnolder} we present the proof of one of its consequences, see the Introduction for the statement of Corollary~\ref{cor-thm-nolder}, see also Corollary 5.4 in~\cite{ak} for the quasiconformal case. 

%
%
%
\begin{proof}[Proof of Corollary~\ref{cor-thm-nolder}]
 The equivalence of conditions (1) - (3) follows from Lemma 4.8 in~\cite{ag2} and Theorem~\ref{thm11-ag2}, whereas implications $(2) \Rightarrow (4) \Rightarrow (5)$ and the equivalence of all conditions (1) - (5) in the finite multiplicity case is the direct consequence of Theorem~\ref{thm:characterizationsnolder}. 
\end{proof}

\begin{proof}[Proof of Theorem~\ref{thm:characterizationsnolder}] 
\noindent $(1)\Rightarrow (2)$. Assume that $\tilde{f}\in L^p(\Sn)$ and consider two cases: (1) $0<p<1$ and (2) $p\geq 1$.
\smallskip

Suppose first $0<p<1$. Then, Corollary~\ref{cor:lem32} applied for $q=p$ and $u(x)=(1-|x|)^{p-1}$ we have that
\begin{equation*}
\int_{\B}\a_{f}^p(x)(1-|x|)^{p-1} \ud x\lesssim \int_{\B}|Df(x)|^p(1-|x|)^{p-1}\ud x.
\end{equation*}
Then, by Lemma 3.5 and Proposition 4.5 in~\cite{ag2} we obtain that
\begin{equation}\label{aux2-thm-nolder}
\int_{\B}|Df(x)|^p(1-|x|)^{p-1}\ud x =\int_{\B}|f(x)|^p\frac{|Df(x)|^p}{|f(x)|^p}(1-|x|)^{p-1}\ud x \leq \| \tilde{f}\|_{L^p(\Sn)}<\infty.
\end{equation}
Upon combining~\eqref{aux2-thm-nolder} with the previous estimate we arrive at assertion (2) for $0<p<1$.

Suppose now $p\geq 1$. Let $y\in\partial f(\B)$ such that $|y-f(0)|=d(f(0),\partial f(\B))$. Then by Corollary~\ref{cor:lem32} applied for~$q=1$ and $u(x)=(1-|x|)^{(p-1)(1+\frac{a}{n})}$ together with Lemma~\ref{lem:2.3AK} we find the following estimate
	\begin{align*}
	\int_{\B}\a_f^p(x)(1-|x|)^{p-1}\ud x &\lesssim\! \int_{\B}|Df(x)|\, \left[\a_{f}(x)(1-|x|)\right]^{p-1}\ud x \label{aux12: lem25AK} \\
	&\lesssim\!\! \int_{\B}|Df(x)|\, d(f(x),\partial f(\B ))^{p-1}\ud x \leq\! \int_{\B}|Df(x)|\, |f(x)-y|^{p-1}\ud x. \nonumber
	\end{align*}
  The latter inequality holds immediately, since point $y$ is further away from $f(x)$ than a point where the distance $d(f(x),\partial f(\B))$ is realized. Note that by Lemma 3.5 and Proposition 4.5 in~\cite{ag2} the latter integral is finite, as we apply these results to a quasiregular map $g:=f-y\not=0$ in $\B$:
	\[ 
	\int_{\B}|Df(x)|\, |f(x)-y|^{p-1} \ud x=\int_{\B}|f(x)-y|^{p} \frac{|Df(x)|}{|f(x)-y|}\, \ud x<\infty,
	\]
and so the assertion (2) follows for $p\geq 1$ as well.
\smallskip

\noindent $(2)\Rightarrow (3)$ 	Observe that by the very definition of the non-tangential region $\Gamma(\om):=\Gamma_{\alpha}(\om)$ with the apex at $\om\in \Sn$ and the aperture $\alpha$, it holds for points in $\Gamma(\om)\cap \B$ that
\[
 x\in \Gamma(\om) \Leftrightarrow \om\in B(x, (1+\alpha)(1-|x|))\cap \partial \Om.
\]
Therefore, by Fubini's theorem the following formula holds for any $u\in L^1(\B,\R)$
\begin{align}
\int_{\B}u(x)\ud x &= \int_{\B}u(x) (1-|x|)^{1-n} (1-|x|)^{n-1}\ud x \nonumber \\
& \approx_{n, \alpha} \int_{\B}u(x) (1-|x|)^{1-n} \left(\int_{\Sn}\chi_{B(x, (1+\alpha)(1-|x|))\cap \partial \Om} \ud \sigma(\om) \right)\ud x \nonumber \\
& = \int_{\B}u(x) (1-|x|)^{1-n} \left(\int_{\Sn}\chi_{\Gamma(\om)} \ud \sigma(\om) \right)\ud x\nonumber \\
& = \int_{\Sn} \left(\int_{\B} u(x) (1-|x|)^{1-n} \chi_{\Gamma(\om)}\ud x \right)\ud \sigma(\om)\nonumber \\
&\approx_{n,\alpha} \int_{\Sn}\int_{\Gamma (\omega )}u(x)(1-|x|)^{1-n}\, \ud x\,\ud\sigma (\omega). \label{aux3-thm37}
\end{align}
Define the following function for $\om\in \Sn$ 
\begin{equation}
v(\om):=\Big(\int_{\Gamma (\om)}\a_{f,\lambda_2}^p(x)(1-|x|)^{p-n} \ud x\Big)^{\frac1p} \label{aux4-thm37}
\end{equation}
where $\lambda_2\in (1,2)$ as in Lemma~\ref{lem:harnack}.
We apply~\eqref{aux3-thm37} with $u=v$ to obtain  
\begin{eqnarray*}
\Vert v\Vert_{L^p(\Sn)}^p&=&\int_{\Sn}v(\omega )^p \ud\sigma (\omega )=\int_{\Sn}\int_{\Gamma (w)} \a_{f,\lambda_2}^p(x)(1-|x|)^{p-n} \, \ud x\, \ud\sigma (\omega ) \nonumber \\
&=& \int_{\Sn}\int_{\Gamma (w)}\left[\a_{f, \lambda_2}^p(x)(1-|x|)^{(p-n)+(n-1)}\right]\, (1-|x|)^{1-n} \, \ud x\, \ud\sigma (\omega ) \nonumber\\
&\simeq &\int_{\B}\a^p_{f,\lambda_2}(x)\, (1-|x|)^{p-1}\ud x  \lesssim \int_{\B}\a^p_{f}(x)\, (1-|x|)^{p-1}\ud x, 
\end{eqnarray*}
where the last estimate comes from applying Theorem~\ref{thm:afintegrability} for $\lambda_1=1$ and $u(x)=(1-|x|)^{p-1}$. In consequence, by assertion (2), we have that $v\in L^p(\Sn)$.

Let $\lambda$ be as in Lemma~\ref{lem:harnack}. Then the following Harnack estimate holds for each $x\in \B$ and all $y\in\lambda B_x$ and $\lambda_2$ as above: $\a_f(x)\lesssim \a_{f,\lambda_2}(y)$. This, together with \eqref{aux4-thm37} yield for each $\omega\in\Sn$:

\begin{eqnarray}
\a_{f}(x)\, (1-|x|) &=&(1-|x|) \left(\vint_{\lambda B_x}\a_f^p(x)\ud y\right)^\frac{1}{p} \lesssim \frac{1-|x|}{|\lambda B_x|^\frac{1}{p}} \left(\int_{\lambda B_x}\a^p_{f,\lambda_2}(y)\ud y\right)^\frac{1}{p} \nonumber \\
&\lesssim & (1-|x|)^{1 -\frac{n}{p}} \left(\int_{\lambda B_x}\a^p_{f,\lambda_2}(y) \ud y\right)^\frac{1}{p} \lesssim
\left(\int_{\lambda B_x}\a^p_{f,\lambda_2}(y)(1-|y|)^{p-n} \ud y\right)^\frac{1}{p} \nonumber \\
&\lesssim & \left(\int_{\Gamma (\omega )}\a^p_{f,\lambda_2}(y) (1-|y|)^{p-n} \ud y\right)^\frac{1}{p} =v(\om), \label{aux8-lem37}
\end{eqnarray}
for all $x\in \Gamma_{\alpha'} (\omega )$ with $\alpha'<\alpha-\frac12$, so that $\lambda B_x\subset \Gamma(\om)$. In particular,
$$
 \Nm(\a_{f}(x)\, (1-|x|)^\gamma  )(\omega )\lesssim v(\omega ).
$$
We then conclude by integrating over $\Sn$ and applying that $v\in L^p(\Sn )$.

%
\smallskip
	 
\noindent Let us now assume that the multiplicity of $f$ in ball $\B$ is finite, i.e., ${N(f,\B)<\infty}$. Then, the implication  $(3)\Rightarrow (1)$ follows directly from Proposition~\ref{prop:Lp-norm-bound} by letting $\eta=\beta:=1$.

Observe that, under the assumption ${N(f,\B)<\infty}$, the proof of the implication $(2)\!\Rightarrow\!(3)$ simplifies, as instead of applying Lemma~\ref{lem:harnack} we may appeal to Corollary~\ref{est-harnack-N} and, in consequence, define the function $v$ with $\lambda_2=1$, thus, simplifying the estimate~\eqref{aux8-lem37}. This together with the previous discussion on the implication $(3)\Rightarrow (1)$  completes the proof of Theorem~\ref{thm:characterizationsnolder}.  
\end{proof}

\subsection{The Riesz theorem}

The classical Riesz conjugate function theorem for an analytic function in the plane asserts it belongs to the Hardy space $\Hp$ for ${1<p<\infty}$ if and only if its real part belongs to $\mathcal{H}^p$. The Riesz theorem fails for quasiconformal mappings, see~\cite[Section 6]{ak}. However, it has counterparts for the harmonic quasiconformal mappings in the plane, see Theorems 1.1-1.2 in~\cite{lz}, and for invariant harmonic quasiregular mappings in $\B$, see Theorem 1.3 in~\cite{lz}. Nevertheless, a theorem by Burkholder--Gundy--Silverstein~\cite{bgs} shows that the $L^p(\mathbb{S}^1)$-norm of a harmonic conjugate function $v$ of a harmonic one $u$ is bounded by the $L^p(\mathbb{S}^1)$-norm of $\Nm(u)$, up to a constant depending only on $p>0$. Similar result holds for quasiconformal mappings in $\B$, see Theorem 6.1 in~\cite{ak}. The following observation is a counterpart of this result and follows from Theorem~\ref{thm:characterizationsnolder}.

\begin{cor}\label{cor-Riesz} Let $f=(f^1,\ldots,f^n):\B\rightarrow \R^n\backslash\{ 0\}$ a $K$-quasiregular mapping satisfying \eqref{cond-m} and \eqref{est-min}; moreover, let the multiplicity of $f$ in ball $\B$ be finite $N(f,\B)<\infty$. Then $f\in \mathcal{H}^p$ if and only if $\Nm(f^i)\in L^p(\Sn)$ for one of the component functions of $f$ and $i=1,\ldots, n$.
\end{cor}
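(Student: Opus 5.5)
The plan is to prove the two implications separately, using Theorem~\ref{thm11-ag2} and Theorem~\ref{thm:characterizationsnolder} as black boxes.

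\emph{The direction $f\in\Hp\Rightarrow \Nm(f^i)\in L^p(\Sn)$.} This one is immediate. Since $|f^i(x)|\leq |f(x)|$ pointwise, we have $\Nm(f^i)\leq \Nm f$ on $\Sn$. Theorem~\ref{thm11-ag2} then gives $\Nm f\in L^p(\Sn)$ whenever $f\in\Hp$, and hence $\Nm(f^i)\in L^p(\Sn)$ for every $i$.

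\emph{The direction $\Nm(f^i)\in L^p(\Sn)\Rightarrow f\in\Hp$.} Fix an index $i$ with $\Nm(f^i)\in L^p(\Sn)$. By Corollary~\ref{cor-thm-nolder}, in the finite multiplicity case, it suffices to verify condition (5), namely
\[
\Nm\big(\a_f(x)(1-|x|)\big)\in L^p(\Sn).
\]
The key step is a pointwise bound of $\a_f(x)(1-|x|)$ by the oscillation of the single component $f^i$ on a hyperbolic ball comparable to $B_x$. Two facts make this plausible:
\begin{itemize}
\item each component $f^i$ is $\A$-harmonic with structure constants depending on $n$ and $K$ (\cite[Theorem 14.42]{hkm});
\item for a quasiregular map the distortion inequality controls $|Df|^n$ by $K J_f$.
\end{itemize}
The heuristic target is
\[
\int_{B_x}|Df|^n \lesssim \int_{\tau B_x}|\nabla f^i|^n ,
\]
after which the Caccioppoli inequality for $f^i$ gives
\[
\a_f(x)(1-|x|)\lesssim \operatorname{osc}_{\tau B_x} f^i \leq 2\sup_{\tau B_x}|f^i| .
\]

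I expect the comparison $|Df|^n \lesssim |\nabla f^i|^n$ in integral form to be the main obstacle, since it fails pointwise. I would first try the route through the Jacobian. By finite multiplicity and Corollary~\ref{est-harnack-N}, $J_f$ is doubling on hyperbolic balls. One would then aim to show that $|\nabla f^i|^n$ cannot be much smaller than $J_f$ on average. If that fails, I would fall back on the following argument. Harnack-type control of $\a_f$ (Corollary~\ref{est-harnack-N}) combined with the Koebe estimate \eqref{est:qr-koebe} gives
\[
\a_f(x)(1-|x|)\approx \diam f(B_x).
\]
It then remains to compare $\diam f(B_x)$ with $\operatorname{osc}_{B_x} f^i$. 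Because $f(\tau B_x)$ is open and the multiplicity is bounded, the image cannot collapse onto a hyperplane. A modulus or capacity argument applied to $f(B_x)$, projected onto the $i$-th coordinate, should then give $\diam f(B_x)\lesssim \operatorname{osc}_{\tau B_x} f^i$.

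Once the pointwise bound is in hand, I would take the supremum over $x\in\Gamma_\alpha(\om)$. Since $\tau B_x\subset \Gamma_{\alpha'}(\om)$ for a slightly larger aperture $\alpha'$, this yields
\[
\Nm_\alpha\big(\a_f(1-|x|)\big)(\om)\lesssim \Nm_{\alpha'}(f^i)(\om).
\]
The standard equivalence of $L^p$ norms of non-tangential maximal functions with different apertures then gives condition (5). Corollary~\ref{cor-thm-nolder} converts (5) into $f\in\Hp$.
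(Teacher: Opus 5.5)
Your forward direction matches the paper's, and so does the structure of your converse: a pointwise bound of $\a_f(x)(1-|x|)$ by $\sup_{tB_x}|f^i|$, then an aperture adjustment, then (5)$\Rightarrow$(1) in Corollary~\ref{cor-thm-nolder}. The gap is that this pointwise bound, which you yourself flag as the main obstacle, is never established.

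\begin{itemize}
\item The Jacobian route stops at the hope that $|\nabla f^i|^n$ is not much smaller than $J_f$ on average. Doubling of $J_f$ says nothing about that.
\item The fallback rests on the unproved claim $\diam f(B_x)\lesssim \operatorname{osc}_{\tau B_x} f^i$. Openness of $f(\tau B_x)$ and bounded multiplicity are qualitative facts and give no uniform constant. A modulus argument would require real work, such as controlling where the image curves go. The standard proof of that claim in fact goes through the very Caccioppoli-type estimate you are trying to reach.
\item $\A$-harmonicity of $f^i$ only controls $\nabla f^i$, not $Df$.
\end{itemize}

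The missing idea is the Caccioppoli inequality for a \emph{single} coordinate function of a quasiregular map, which the paper quotes from \cite[Proposition 2.6]{no}. It comes from the null-Lagrangian structure of the Jacobian.

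Write $J_f\,\mathrm{d}x=\pm\, df^i\wedge\omega$ with $\omega=\bigwedge_{j\neq i}df^j$, which is (weakly) closed for $W^{1,n}_{loc}$ maps. Take $\varphi\in C_c^\infty(tB_x)$ with $0\le\varphi\le 1$, $\varphi=1$ on $B_x$ and $|\nabla\varphi|\lesssim((t-1)(1-|x|))^{-1}$, and let $c$ be any constant. Integrate by parts and use $|Df|^n\le KJ_f$, $J_f\ge 0$, $|\omega|\le|Df|^{n-1}$ and H\"older's inequality:
\[
\int\varphi^n|Df|^n\le K\int\varphi^nJ_f=K\Big|\int(f^i-c)\,d(\varphi^n)\wedge\omega\Big|\le nK\Big(\int\varphi^n|Df|^n\Big)^{\frac{n-1}{n}}\Big(\int|\nabla\varphi|^n|f^i-c|^n\Big)^{\frac1n}.
\]
Hence $\int_{B_x}|Df|^n\lesssim_{n,K}\big(\tfrac{t}{t-1}\big)^n\vint_{tB_x}|f^i-c|^n$.

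Taking $c=0$ gives directly $\a_f(x)(1-|x|)\lesssim_{n,K,t}\sup_{tB_x}|f^i|$. This step needs no comparison of $|Df|$ with $|\nabla f^i|$ and no Koebe estimate. It also does not use finite multiplicity, which enters only through (3)$\Rightarrow$(1) of Theorem~\ref{thm:characterizationsnolder}. With this inserted, the rest of your argument goes through as written.
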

\begin{proof}
 If $f\in \mathcal{H}^p$ , then $\Nm(f)\in L^p(\Sn)$ by Theorem~\ref{thm11-ag2} and, hence, also $\Nm(f^i)\in L^p(\Sn)$ for all $i=1,\ldots, n$.
 
 For the opposite implication, let $\Nm(f^i)\in L^p(\Sn)$ for some $i=1,\ldots, n$. Since $|f^i(x)|\leq \Nm(f^i)(\omega)$ for all $x\in\Gamma (\omega)$, it holds for $t>0$ that
\[
\vint_{tB_x} |f^i(y)|^n\ud y\leq \Nm(f^i)(\omega )^n,
\]
for any ball $tB_x\subset \Gamma_{\alpha}(\omega)$ with $\om:=\frac{x}{|x|}$, $\alpha>2$ and all $1<t\leq \frac{2\alpha}{2+\alpha}$. The latter follows immediately from the definition of the non-tangential region, since if $y\in tB_x$, then
\[
 |y-\om|\leq |y-x|+|x-\om|\leq (1+\frac{t}{2})(1-|x|)\leq \frac{2+t}{2-t}(1-|y|)\leq (1+\alpha)(1-|y|).
\]
On the other hand, the Caccioppoli inequality for component functions of a quasiregular map, see~\cite[Proposition 2.6]{no}, gives that 
 for almost all $x\in \B$
\begin{equation}\label{cor-Riesz-aux}
\int_{B_x} |Df|^n\leq C(n, K) \left(\frac{t}{t-1}\right)^n \vint_{t B_x} |f^i|^n \lesssim_{n, K, t} \Nm(f^i)(\omega )^n,
\end{equation}

The estimate~\eqref{cor-Riesz-aux} equivalently reads 
\[
\a_f(x)(1-|x|)\leq C(n, K, t) \Nm(f^i)(\omega ),
\]
 which in turn gives that $\Nm(\a_f(x)(1-|x|))\in L^p(\Sn)$ for all $p>0$. Therefore, by Theorem~\ref{thm:characterizationsnolder} and the assumption $N(f,\B)<\infty$ we have that assertion (3) implies (1), meaning that $\tilde{f}\in L^p(\Sn)$ and hence $f\in \mathcal{H}^p$ by Theorem~\ref{thm11-ag2}.
\end{proof}

\section{Logarithm of a quasiregular map and ${\rm BMO}(\Sn)$}

The purpose of this section is to prove a counterpart of Theorem 7.1 in~\cite{ak} asserting that the logarithm of the extension $|\tilde{f}|$ belongs to the BMO space for a quasiconformal map $f$, cf. Theorem 4 in~\cite{j}. The proof can be derived from the discussion in the proof of Corollary 4.3 in~\cite{ag2}. However, here we present more details.

\begin{theorem}\label{thm:log-bmo}
Let $n \geq 2$ and $f: \B\to \Rn \setminus \{0\}$ be a $K$-quasiregular mapping in class~\eqref{est-min} satisfying the  multiplicity condition~\eqref{cond-m} with some $0\leq a<n-1$. Then, $\ln |\tilde{f}|\in BMO(\Sn)$.
\end{theorem}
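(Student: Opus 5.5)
The plan is to show that for every boundary cap $\Delta=\Delta(\om_0,r)$ there is a constant $c_\Delta$ with $\vint_{\Delta}|\ln|\tilde f|-c_\Delta|\,\ud\sigma\lesssim 1$, and to take $c_\Delta=\ln|f(x_\Delta)|$. Here $x_\Delta:=(1-r)\om_0$ is the point whose shadow $S(x_\Delta)$ is comparable to $\Delta$. To do this we move the problem to the origin by the M\"obius automorphism $T:=T_{x_\Delta}$ and set $g:=f\circ T^{-1}$. By the remarks after the definition of the class \eqref{est-min}, $g$ is again $K$-quasiregular, omits $0$, satisfies \eqref{precond-m} with the same constants, and satisfies \eqref{est-min} with constants depending only on $n,K$. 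Moreover $T^{-1}$ maps a fixed cap around $T(\om_0)$, whose size is bounded below, onto $\Delta$. On that cap the Jacobian of $T^{-1}|_{\Sn}$ is comparable to $\sigma(\Delta)$ with constants depending only on $n$. So $\vint_\Delta|\ln|\tilde f|-\ln|f(x_\Delta)||\,\ud\sigma\lesssim\int_{\Sn}|\ln|\tilde g|-\ln|g(0)||\,\ud\sigma$, using that $\tilde g=\tilde f\circ T^{-1}$ since non-tangential limits are preserved by $T^{-1}$. It therefore suffices to prove a uniform bound
\[
\int_{\Sn}\bigl|\ln|\tilde g(\om)|-\ln|g(0)|\bigr|\,\ud\sigma(\om)\le C(n,K,a,C_M)
\]
for all maps $g$ of this normalized type.

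For this bound, split $\ln|\tilde g|-\ln|g(0)|=\ln^+(|\tilde g|/|g(0)|)-\ln^+(|g(0)|/|\tilde g|)$ and treat the two parts separately, after normalizing $|g(0)|=1$. For the first part, apply Theorem~\ref{thm11-ag2} to $g$ with a small exponent, say $p=\tfrac12$. It is enough to check $g\in\mathcal H^{1/2}$ with a uniform norm bound. The growth estimate \eqref{est-min} only gives $|g(r\om)|\lesssim(1-r)^{-m}$, which is not integrable for $p=1/2$ unless $pm<n-1$. Instead, take $p$ so small that $pm<1$; then the sphere averages $\vint_{\Sn}|g(r\om)|^p$ are bounded uniformly in $r$. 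Theorem~\ref{thm11-ag2} then gives $\|\tilde g\|_{L^p}\le C$, and since $\ln^+t\le t^p/p$, the first part is bounded. For the second part, apply the same argument to $1/g$, or rather to $h:=g/|g|^2$ composed with the inversion. This map is $K$-quasiregular, omits $0$, has the same multiplicity, and satisfies \eqref{est-min} by the symmetric form of that two-sided estimate. We get $\int(|g(0)|/|\tilde g|)^p\lesssim1$, which bounds $\ln^+(1/|\tilde g|)$.

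The main obstacle is ensuring that the constants in Theorem~\ref{thm11-ag2} are genuinely uniform under the normalization. The theorem asserts that they depend only on $n,p,a,K,m$, so that part is fine. What needs care is that $h$ satisfies \eqref{cond-m} and \eqref{est-min}: composition with the inversion $y\mapsto y/|y|^2$ preserves quasiregularity and multiplicity, and the two-sided Miniowitz bound is symmetric under $|g|\mapsto1/|g|$. An alternative route avoids Theorem~\ref{thm11-ag2} for the lower part. From Lemma 3.5 and Proposition 4.5 in~\cite{ag2}, $\int_{\B}|Dg|/|g|\,\ud x$ is controlled, and integrating $|\nabla\ln|g||\le|Dg|/|g|$ along radii yields $\int_{\Sn}|\ln|\tilde g|-\ln|g(0)||\lesssim\int_{\B}\frac{|Dg|}{|g|}|x|^{1-n}\,\ud x$. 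The singularity at $0$ would then be handled by \eqref{est-min} on $\tfrac12\B$. I would try this second route if the inversion argument runs into technical problems.
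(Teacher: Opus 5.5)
Your main route has a genuine gap at the step ``take $p$ so small that $pm<1$; then the sphere averages $\vint_{\Sn}|g(r\om)|^p$ are bounded uniformly in $r$.'' The Miniowitz estimate~\eqref{est-min} is a pointwise bound, so all it yields is $\vint_{\Sn}|g(r\om)|^p\,\ud\sigma(\om)\lesssim (1-r)^{-pm}$. This blows up as $r\to1$ for every $p>0$. The condition $pm<1$ only makes $\int_0^1(1-r)^{-pm}\,\ud r$ finite, i.e.\ it gives $|g|^p\in L^1(\B)$, not $g\in\mathcal H^p$. Theorem~\ref{thm11-ag2} only converts a Hardy bound into an $L^p(\Sn)$ bound or back, so nothing in your argument actually produces the uniform estimate $\int_{\Sn}(|\tilde g|/|g(0)|)^{\pm p}\,\ud\sigma\le C$. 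This is not a technicality. A bound of that type, uniform over all normalizations $g=f\circ T_x^{-1}$, implies $\ln|\tilde f|\in BMO(\Sn)$, and conversely it follows from it by John--Nirenberg. So this route assumes something of the same strength as the theorem. (Separately, $y\mapsto y/|y|^2$ reverses orientation, so $g/|g|^2$ is not quasiregular as it stands; you would need to compose with a reflection.)

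The route you list as a fallback is the correct one, and it is essentially the paper's proof; make it the main argument. The uniform input is Lemma~3.5 of~\cite{ag2} (the Jones lemma): $\int_{\B}|Dg|/|g|\,\ud x\le C(n,K,a,m)$, uniformly in $x$ for $g=f\circ T_x^{-1}$. This is exactly where \eqref{cond-m} and \eqref{est-min} enter.

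Integrate $|\nabla\ln|g||\le |Dg|/|g|$ along radii only from a fixed radius ($1/7$ in the paper; your $1/2$ works) to $1$. For smaller $t$, bound $\bigl|\ln|g(t\om)|-\ln|g(0)|\bigr|$ by the Harnack/Miniowitz estimate. Integrating from $0$ against $|x|^{1-n}$ would otherwise require higher integrability of $|Dg|$. This gives $\int_{\Sn}\bigl|\ln|\tilde g|-\ln|g(0)|\bigr|\,\ud\sigma\le C$. Your M\"obius transfer then turns it into a mean-oscillation bound on every cap; the paper uses $|T_x(y)-T_x(w)|\approx|y-w|/(1-|x|)$ on $S(x)$ for this step.

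The one difference from the paper is in your favor. The paper converts the $L^1$ bound via Chebyshev into a weak-type level-set estimate on shadows, and then needs a Str\"omberg/John-type criterion (Lemma 3.6 in~\cite{kkms}) to conclude. Keeping the strong $L^1$ bound, as you do, yields the BMO condition directly.
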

Note that, a priori, in order to ensure the existence of the non-tangential limit map $\tilde{f}$ we would need also to assume the growth condition~\eqref{cond-g}, see Theorem 4.1 in~\cite{kmv}. However, since we impose the Miniowitz condition~\eqref{est-min}, its upper estimate immediately results in the growth condition with the constant $C|f(0)|2^{m}$ and the growth exponent $m$.
\begin{proof}
 First, we show the following estimates for the shadows $S(x)$ of points $x\in \B$.
 
 Let $u:=\ln |f|$ and $F(\om):=\int_{0}^{1}|\nabla u(t\om)|\,t^{n-1}\ud t$ for $\om\in \Sn$. Note that Lemma 3.5 in~\cite{ag2} implies the following
\begin{equation*}
 \|F\|_{L^1(\Sn)}=\int_{\Sn} \int_{0}^{1}|\nabla u(t\om)|\,t^{n-1}\ud t \ud \om \leq \int_{\B} \frac{|Df|}{|f|} \ud x<c(a, m, n, K). 
\end{equation*}

\noindent {\sc Claim:} \emph{There exists a constant $C$ independent of $f$ such that the following estimates hold for all $z\in \B$
\begin{equation}\label{eq: shadow-est1}
 \sigma\left(\left\{\om \in S(z): |\tilde{f}(\om)|\geq N |f(z)| \right\}\right)\leq \frac{7^n C \|F\|_{L^1(\Sn)}}{\ln N} \sigma(S(z)).
\end{equation}
\begin{equation}\label{eq: shadow-est2}
 \sigma\left(\left\{\om \in S(z): \big|\ln |\tilde{f}(\om)|-\ln|f(z)|\big|\geq \ln N \right\}\right)\leq \frac{7^n C \|F\|_{L^1(\Sn)}}{\ln N} \sigma(S(z)).
\end{equation}
}

\emph{Proof of the claim:} as in the proofs of Claims 1 and 2 in~\cite[Proposition 3.8]{ag2} we first show the following auxiliary estimate:
\begin{equation}\label{eq: shadow-est3}
 \sigma\left(\left\{\om \in \Sn: |\tilde{f}(\om)|\geq N |f(0)| \right\}\right)\leq \frac{7^n C \|F\|_{L^1(\Sn)}}{\ln N}.
\end{equation}
Denote the set of points in the assertion of the claim by $F_N\subset \Sn$. The Harnack inequality for $|f|$, see Corollary 3.4 in~\cite{ag2}, applied for a ball $B(0,|\frac17 \om)|$ for $\om\in F_N$ gives that 
\begin{equation}\label{est-aux-f}
 \frac{|\tilde{f}(\om)|}{|f(\frac17\om)|}= \frac{|\tilde{f}(\om)|}{|f(0)|}\, \frac{|f(0)|}{|f(\frac17\om)|}\geq \frac{N}{C_H}.
\end{equation}
Thus, we get
\[
 F(\om)\geq 7^{1-n}\int_{\frac17}^{1} |\nabla u(t\om)|\ud t\geq 7^{1-n}\left|\ln \frac{|\tilde{f}(\om)|}{|f(\frac17 \om)|}\right|
 \geq 7^{1-n}\left|\ln \frac{N}{C_H}\right|\geq \frac{\ln N}{7^n}.
\]
 The estimate~\eqref{eq: shadow-est3} now follows by direct integration:
 \begin{align*}
 \sigma (F_N)=\int_{F_N} 1\,\ud \sigma \leq \int_{F_n} \frac{7^n F(\om)}{\ln N} &\leq \frac{7^n}{\ln N} \int_{F_N} \int_{0}^{1}|\nabla u(t\om)|\,t^{n-1}\ud t \\
& \leq \frac{7^n}{\ln N} \int_{\B} \frac{|Df|}{|f|} \ud x.
 \end{align*}
Let us comment that the proof requires the Harnack estimate on a ball which is away from the boundary of $\B$ and for this, $f$ need not be in the Miniowitz class (a PDE argument or the Miniowitz estimate would suffice to show the Harnack estimate on a ball $B\subset 2B\Subset \B$). Moreover, in order to conclude that $\|F\|_{\Sn}<\infty$ we only need the Miniowitz estimate and condition~\eqref{cond-m}, see the first part of the proof of the Jones Lemma 3.5 in~\cite{ag2}.

\smallskip

 The proof of~\eqref{eq: shadow-est1} goes via the reduction to the claim~\eqref{eq: shadow-est3} and is similar to the proof of Claim 2 in~\cite{ag2}. Therefore, we reduce our presentation only to the key steps.
 
 Set $g:=f\circ T_z^{-1}(x)$ and recall that $g$ satisfies the multiplicity condition~\eqref{cond-m} on hyperbolic balls centered at the origin. Since by property (4) in~\cite{z} of map $T_z$  we have that for all $z\in B$ that $B(0,\frac17)\subset T_z\left( B(z, \frac14 (1-|z|))\right)$, and so
\[
g( B(0,\frac17) )\subset f \left( B(z, \frac14 (1-|z|))\right).
\]
By analogy to the proof of~\eqref{eq: shadow-est1}, we set $u:=\ln |g|$ and define the function $F$ corresponding to $u$, for the sake of simplicity of the presentation denoted again by $F$. Since $g(0)=f\circ T_z^{-1}(0)=f(z)$, by the Harnack inequality we get~\eqref{est-aux-f} for $g$, resulting in the estimate
\begin{equation}\label{est-gf-aux}
\sigma\left(\left\{\om\in \Sn: |\tilde{g}(\om)|\geq N |f(z)| \right\}\right)\leq \frac{7^n \|F\|_{L^1(\Sn)}}{\ln N},
\end{equation}
For the norm $\|F\|_{L^1(\Sn)}<\infty$ we observe that  $\|F\|_{L^1(\Sn)}\leq \int_{\B}\frac{|Dg|}{|g|}<\infty$. It is here that the Miniowitz class property of $f$ is employed in order to have the estimate for the latter integral independent of the choice of $z$ in $T_z^{-1}$, see the discussion in the second part of the proof of Lemma 3.5 in~\cite{ag2}.

Similarly to~\cite{ag2}, we now have by~\eqref{est-gf-aux} that
\[
\sigma\left(T_z\left(\left\{y \in S(z): |\tilde{f}(y)|\geq N |f(z)| \right\}\right)\right)\leq \frac{7^n \|F\|_{L^1(\Sn)}}{\ln N}.
\]
Finally, recall the following estimate (3) in~\cite{z} for $x,y\in S(z)$, the shadow associated with $z$:
\[
\frac{1}{9(1-|z|)} |x-y|\leq |T_z(x)-T_z(y)|\leq \frac{2}{1-|z|} |x-y|.
\]
 This, together with the observation that $\sigma(S(z)) \approx (1-|z|)^{n-1}$ gives the first assertion~\eqref{eq: shadow-est1} of the Claim. Then, the estimate~\eqref{eq: shadow-est2} is a direct consequence of~\eqref{eq: shadow-est1}.

The estimate~\eqref{eq: shadow-est2} allows us to make the last step. However, our estimate differs from the corresponding ones in Lemma 4.2 and Remark following that lemma in~\cite{ak} in terms of the power of $\ln N$, which in \cite{ak} appears as $(\ln N)^{n-1}$, and thus we cannot apply the Cavalieri principle to represent the integral of $\big|\ln|f(x)|-\ln|f(z)|\big|$ in terms of the superlevel sets as in~\eqref{eq: shadow-est2}. Instead, we employ a variant of the BMO spaces description provided by Lemma 3.6 in~\cite{kkms} for the metric measure spaces, see also similar Euclidean results in Str\"omberg~\cite{str}, Ex. 4 in~\cite[pg.  261, Ch VI]{g} for the formulation in $\R$ and Appendix in~\cite{agg} for the $\Rn$ case. The metric spaces result adapted to our setting reads:

\emph{Let $h:\Sn \to \R$ be a measurable function and suppose that there exist a constant $\gamma\in(0,(4c_{\sigma}^3)^{-1})$ for $c_{\sigma}$ the doubling constant of the surface measure $\sigma$ on $\Sn$ and $\lambda>0$, such that for any ball $B\subset \Sn$ and any $c\in \R$ it holds
\begin{equation}\label{aux2-thm:log-bmo}
 \sigma\left(\left\{x\in B: |h(x)-c|>\lambda \right\}\right)<\gamma\sigma(B).
\end{equation}
Then, $h \in BMO(\Sn)$ with the BMO-(semi)norm estimate $\|h\|_{{\rm BMO}}\lesssim_{c_{\sigma}} \lambda$.
}

We apply the claim with 
\[
 h:=\ln \big|{\tilde f}|_{_{\Sn}}\big|\quad \hbox{and }\lambda:=\ln N, \quad \gamma:= \frac{7^n C \|F\|_{L^1(\Sn)}}{\ln N},
\]
and so the constant $\gamma$ equals the one in the claim above, see~\eqref{eq: shadow-est1} and~\eqref{eq: shadow-est2}. Moreover, we set constants $c$ in~\eqref{aux2-thm:log-bmo} as follows $c:=\ln |f(z)|$, for $z\in \B$. Then the assumption~\eqref{aux2-thm:log-bmo} holds by~\eqref{eq: shadow-est2}, since shadows $S(z)=B(z, (1+\alpha)(1-|z|)) \cap \Sn$ are defined as intersections of balls centered at points in $\B$ with $\Sn$ and, hence, define also balls in $\Sn$.
\end{proof}

\section{Quasiregular maps and BMO spaces} 

The main goal of this section is to prove Theorem~\ref{thm72}, the non-injective counterpart of Theorem 7.2 in~\cite{ak}, characterizing quasiconformal maps in the BMO spaces via the BMO condition on their non-tangential limit maps, the Carleson measures and the growth of the averaged derivative $a_f$ and its non-tangential function. 

We begin with a counterpart of Lemma 9.4 in~\cite{ak} originally proven for the quasiconformal mappings.

\begin{lem}\label{lem94}
 Let $n \geq 2$ and $f:\B\to \Rn \setminus \{0\}$ be a $K$-quasiregular mapping  satisfying the multiplicity condition~\eqref{precond-m} with some $0\leq a<n-1$. Moreover, let us assume that $f$ satisfies the growth condition~\eqref{cond-g} with exponent $\beta<n-1$. Then,
 \[
  \int_{\B}|Df(x)|^{n}(1-|x|)^{n-1+a}\,\ud x\leq c<\infty,
 \]
 where $c$ depends only on $n, K,a$ and $\beta$. 
 
 Furthermore, if instead of condition~\eqref{cond-g} we assume that $f$ is in the $n$-Hardy space $\|f\|_{\mathcal{H}_n}<\infty$, then the constant $c$ equals $c= C(n, K, a)\|f\|_{\mathcal{H}_n}$.
\end{lem}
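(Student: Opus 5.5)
The plan is to reduce the integral to a volume count of the image, weighted by multiplicity, using layers near the boundary. Write $\B=\bigcup_{k\ge 0}A_k$ with $A_0=B(0,1/2)$ and annuli $A_k=\{1-2^{-k}\le |x|<1-2^{-k-1}\}$ for $k\ge 1$. On $A_k$ the weight is comparable to $2^{-k(n-1+a)}$. Since $|Df|^n\le KJ_f$, the quasiregular change of variables gives
\[
\int_{A_k}|Df|^n\,\ud x\le K\int_{\Rn}N(y,f,A_k)\,\ud y\le K\,N\big(f,B(0,1-2^{-k-1})\big)\,|f(A_k)|.
\]
By \eqref{precond-m} the multiplicity factor is at most $C2^{(k+1)a}$, and this cancels exactly the extra factor $(1-|x|)^a$ in the weight. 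It therefore remains to show
\[
\sum_k 2^{-k(n-1)}|f(A_k)|<\infty .
\]

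\emph{Growth case.} Using \eqref{cond-g}, $f(A_k)\subset B(0,C2^{(k+1)\beta})$, so $|f(A_k)|\lesssim 2^{kn\beta}$. This crude bound yields a convergent series only when $n\beta<n-1$, which is weaker than what is claimed, so it must be refined. The better approach is to avoid estimating the whole image and instead cover $A_k$ by about $2^{k(n-1)}$ Whitney balls $B_{x_j}$ (using Lemma \ref{lem:WhitneyCovering} or Lemma \ref{lem:cover-aux}). On each ball, Lemma \ref{lem:2.3AK} (or its Remark \ref{rem-lem23-v2} version under \eqref{precond-m}) combined with Caccioppoli-type estimates bounds $\int_{B_{x_j}}|Df|^n$ by $\big(\sup_{2B_{x_j}}|f|\big)^n$ times the local multiplicity. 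That alone is still too lossy, and this is the step I expect to be the main obstacle: matching the exponent condition $\beta<n-1$ exactly. To handle it I would follow \cite[Lemma 9.4]{ak} and integrate along radii rather than over images. Write
\[
\int_{\B}|Df|^n(1-|x|)^{n-1+a}\,\ud x=\int_0^1(1-r)^{n-1+a}\,r^{n-1}\int_{\Sn}|Df(r\om)|^n\,\ud\sigma\,\ud r,
\]
and then integrate by parts in $r$. The quantity $\Phi(r)=\int_{B(0,r)}|Df|^n\lesssim N(f,B(0,r))\,|f(B(0,r))|$ is controlled by $(1-r)^{-a}\max_{|x|\le r}|f|^n$, and the maximum principle/openness gives $|f(B(0,r))|\le |B(0,M(r))|$. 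The boundary term and the resulting integral $\int_0^1(1-r)^{n-2}\Phi(r)(1-r)^a\,\ud r$ then converge provided the growth exponent is small enough. If the exponent bookkeeping fails here, I would use the Hardy-type averaging $\int_{\Sn}|f(r\om)|^n$ in place of the sup.

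\emph{Hardy case.} This is cleaner. Apply the same layer decomposition and, on each Whitney ball, the Caccioppoli inequality \cite[Proposition 2.6]{no} for the components $f^i$:
\[
\int_{B_x}|Df|^n\lesssim (1-|x|)^{-n}\int_{tB_x}|f|^n\cdot(1-|x|)^n .
\]
This gives $\int_{A_k}|Df|^n\lesssim \int_{\tilde A_k}|f|^n(1-|x|)^{0}$ with multiplicity absorbed through \eqref{precond-m}, or even without it. Summing against $2^{-k(n-1+a)}$ and using $\int_{\tilde A_k}|f|^n\lesssim 2^{-k}\|f\|_{\mathcal H_n}^n$ yields convergence, since $n-1+a>0$. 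The result is linear dependence on $\|f\|^n_{\mathcal H_n}$, with normalization as in the statement.
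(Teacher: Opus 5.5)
Your first computation is correct, and it is the paper's own argument. The paper also splits $\B$ into rings $R_j$ and uses $|Df|^n\le KJ_f$, the change of variables and \eqref{precond-m}, whose factor $2^{ja}$ cancels the extra $(1-|x|)^a$. It then writes $|f(R_j)|\le C2^{j\beta}$ and sums $\sum_j 2^{-j(n-1-\beta)}$. That bound drops the exponent $n$: $f(R_j)$ lies in a ball of radius $\approx 2^{j\beta}$, whose volume is $\approx 2^{jn\beta}$. So the series converges exactly when $n\beta<n-1$, as you found.

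The refinements you sketch cannot reach $\beta<n-1$, because the growth-case statement is false for $(n-1)/n\le\beta<n-1$. Take $f(z)=(1-z)^{-\beta}$ on $\mathbb{D}$. It is conformal and zero-free, satisfies \eqref{cond-g} with exponent $\beta$ (since $|1-z|\ge 1-|z|$), and satisfies \eqref{precond-m} with $a=0$. In a Stolz angle at $1$ we have $1-|z|\gtrsim|1-z|=\rho$, so $\int_{\mathbb{D}}|f'(z)|^2(1-|z|)\,\ud A(z)\gtrsim\int_0^{1/2}\rho^{-2\beta}\,\ud\rho=\infty$ for $\beta\ge 1/2$; equivalently, $f\notin H^2$. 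In $\Rn$, the map $x\mapsto|x-e_1|^{-\beta-1}(x-e_1)$, composed with a reflection, is an inversion followed by a radial stretch. It gives the same divergence exactly when $\beta\ge(n-1)/n$.

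Your integration-by-parts variant confirms this. With $\Phi(r)\lesssim(1-r)^{-a-n\beta}$, what remains is $\int_0^1(1-r)^{n-2-n\beta}\,\ud r$, which again requires $n\beta<n-1$. So the obstacle you flagged is real. The honest conclusion of the growth case is the lemma under $\beta<(n-1)/n$.

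In the Hardy case your Caccioppoli inequality is mis-scaled. The factor $(1-|x|)^n$ has no justification: $\int_{B_x}|Df|^n$ is scale invariant while $\int_{tB_x}|f|^n$ scales like $r^n$. The correct form is $\int_{B_x}|Df|^n\lesssim_{n,K,t}(1-|x|)^{-n}\int_{tB_x}|f|^n$. With it, $\int_{A_k}|Df|^n\lesssim 2^{kn}\,2^{-k}\|f\|^n_{\mathcal{H}^n}$, and the weighted sum becomes $\sum_k 2^{-ka}\|f\|^n_{\mathcal{H}^n}$. This is finite if and only if $a>0$.

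Once corrected, your route proves the Hardy-space assertion for $0<a<n-1$, with $c=C(n,K)(1-2^{-a})^{-1}\|f\|^n_{\mathcal{H}^n}$. It does not use \eqref{precond-m} at all. This is better than the paper's route. There, the growth exponent $(n-1)/n$ from Observation 3.1 of \cite{ag2}, fed into the corrected volume count, gives $\sum_j 2^{-j(n-1+a)}2^{ja}2^{j(n-1)}\|f\|^n_{\mathcal{H}^n}=\sum_j\|f\|^n_{\mathcal{H}^n}=\infty$. The multiplicity factor eats exactly the gain from the weight.

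At $a=0$ neither layer-by-layer argument closes; that endpoint needs a genuine Littlewood--Paley type estimate. Finally, homogeneity under $f\mapsto tf$ forces the constant to be proportional to $\|f\|^n_{\mathcal{H}^n}$, not to $\|f\|_{\mathcal{H}_n}$ as the statement says. Your ``linear in $\|f\|^n$'' is the correct normalization, and it is not the one in the statement.
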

%

\begin{proof}
 Let us represent the unit ball $\B$ as the union of the following ring domains 
 $$
  R_j:=\{x\in \B:1-2^{-j}\leq |x| \leq 1-2^{-(j+1)}\}\,\hbox{ for } j=0,1,\ldots.
 $$
 Then, by the distortion inequality and the quasiregular change of variables it holds that
 \begin{align}
   &\int_{\B}|Df(x)|^{n}(1-|x|)^{n-1+a}\,\ud x \nonumber \\
   &\phantom{AAAA} \leq K \sum_{j=0}^{\infty} \int_{R_j}J_f(x)(1-|x|)^{n-1+a}\,\ud x \nonumber \\ 
   &\phantom{AAAA} \lesssim_{n, K, a} \sum_{j=0}^{\infty} 2^{-j(n-1+a)}N(f, B(0, 1-2^{-j})) |f(R_j)|. 
   \label{eq3-lem94}
 \end{align}
 Next we observe that if $f$ satisfies the growth condition~\eqref{cond-g} with some $\beta>0$, then we have that 
\begin{equation} \label{eq4-lem94}
|f(R_j)|\leq |f(B(0, 1-2^{-j})|\leq C2^{j\beta}.
\end{equation}
%
This, together with the multiplicity condition~\eqref{precond-m} leads to the following estimate:
\[
\int_{\B}|Df(x)|^{n}(1-|x|)^{n-1+a}\,\ud x \lesssim_{K, a, n} \sum_{j=0}^{\infty} 2^{-j(n-1+a)}2^{ja} C2^{j\beta}\lesssim_{K, a, n} C \sum_{j=0}^{\infty} 2^{-j(n-1-\beta)}.
\]
The series converges, provided that $\beta<n-1$ and the lemma is proven with $c=c(n,K,a, \beta)$. 

Suppose now that instead of~\eqref{cond-g}, we assume that $f\in \mathcal{H}_n$. Then, Observation 3.1 in~\cite{ag2}, holding without assuming~\eqref{cond-m} and~\eqref{cond-g}, shows that a quasiregular map with $\|f\|_{\mathcal{H}_n}<\infty$ satisfies the growth condition with $\beta=\frac{n-1}{n}$ and, moreover, the constant $C=C(n, K) \|f\|_{\mathcal{H}_n}$, see the estimate (5) in~\cite{ag2}. This completes the proof of Lemma~\ref{lem94}.
\end{proof}

We remark that the Miniowitz estimate~\eqref{est-min} implies the growth condition with exponent $\beta=m$ which, a priori, could be applied in~\eqref{eq4-lem94}. However, $m=2^{n-1}K_I$, where $K_I$ is the inner distortion of map $f$, see~\cite[Theorem 3]{M1}. Hence, the value of $m$ given by~\eqref{est-min} turns out to be too large to ensure convergence of the integral~\eqref{eq3-lem94} and either we assume that $\beta<n-1$ or that $\|f\|_{\mathcal{H}_n}<\infty$.  However, in the latter case, Observation 3.1 in~\cite{ag2} results in $\beta=\frac{n-1}{n}<n-1$.

Let us note that the following, slightly stronger, version of Lemma~\ref{lem94} can be obtained if we allow the multiplicity exponent $a$ to depend also on the Gehring exponent. However, in practice it is difficult to estimate how large this exponent can be. 

\begin{cor}\label{cor94}
 Let $n \geq 3$ and $f: \B\to \Rn \setminus \{0\}$ be a $K$-quasiregular mapping in class~\eqref{est-min} satisfying the multiplicity condition~\eqref{cond-m} with some $0\leq a<\frac{p_0-n}{n}$, where $p_0=p_0(n, K)>n$ is an exponent in the reverse H\"older inequality additionally satisfying $p_0<n^2$. Moreover, let us assume that $f$ is the $n$-Hardy space $\|f\|_{\mathcal{H}_n}<\infty$.
  Then,
 \[
  \int_{\B}|Df(x)|^{n+a}(1-|x|)^{n-1+a}\,\ud x\leq C \|f\|_{\mathcal{H}^n}^n,
 \]
 where $C$ depends only on $n, K$ and $a, m$. 
\end{cor}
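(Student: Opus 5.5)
We will prove Corollary~\ref{cor94} by interpolating between the Hardy-space bound of Lemma~\ref{lem94} and the Gehring higher integrability of $|Df|$. Write
\[
|Df|^{n+a}(1-|x|)^{n-1+a}=\big(|Df|^{n}(1-|x|)^{n-1}\big)\cdot\big(|Df|^{a}(1-|x|)^{a}\big).
\]
The second factor is, up to constants, a pointwise quantity of the type $\big(\a_f(x)(1-|x|)\big)^a$. By Lemma~\ref{lem:2.3AK} this is controlled by $d(f(x),\partial f(\B))^a\le |f(x)-y|^a$. A cleaner route is the growth bound: for $f\in\mathcal{H}^n$, Observation 3.1 of~\cite{ag2} gives $|f(x)|\lesssim \|f\|_{\mathcal{H}^n}(1-|x|)^{-(n-1)/n}$. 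However, $|Df|$ itself is not pointwise bounded. We therefore work on Whitney balls $B_x$ and replace pointwise bounds by averages.

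\textbf{Step 1: local estimate on a Whitney ball.} Cover $\B$ by a Whitney family $\{B_{x_i}\}$ with bounded overlap of $2B_{x_i}$, using Lemma~\ref{lem:WhitneyCovering}. On each ball, apply H\"older's inequality with exponents $\frac{p_0}{n+a}$ and its conjugate; this is admissible since $n+a<n+\frac{p_0-n}{n}<p_0$. Combined with the reverse H\"older inequality for $|Df|$ (Gehring, exponent $p_0$, as used in Lemma~\ref{lem:2.5AK} via~\cite{in}), this gives
\[
\vint_{B_{x_i}}|Df|^{n+a}\lesssim\Big(\vint_{2B_{x_i}}|Df|^{n}\Big)^{\frac{n+a}{n}}=\Big(\vint_{2B_{x_i}}|Df|^{n}\Big)\cdot \Big(\vint_{2B_{x_i}}|Df|^{n}\Big)^{\frac an}.
\]
The last factor equals $\a_{f,\lambda}(x_i)^a$ for a suitable $\lambda<2$, up to constants.

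\textbf{Step 2: bound the averaged derivative.} By Lemma~\ref{lem:2.3AK} (using~\eqref{cond-m} with constant $C(\lambda,n)$ on hyperbolic balls),
\[
\a_{f,\lambda}(x_i)(1-|x_i|)\lesssim d(f(x_i),\partial f(\B))\le |f(x_i)|+\dots
\]
A bound via $|f|$ alone fails because $\partial f(\B)$ may be far. Instead, use the same change of variables directly:
\[
\int_{2B}|Df|^n\le K\,N(f,2B)\,|f(2B)|\lesssim \Big(\sup_{2B}|f|\Big)^n,
\]
and bound $\sup_{2B_{x_i}}|f|\lesssim \|f\|_{\mathcal{H}^n}(1-|x_i|)^{-(n-1)/n}$. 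This yields $\big(\a_{f,\lambda}(x_i)(1-|x_i|)\big)^a\lesssim \|f\|^a_{\mathcal{H}^n}(1-|x_i|)^{-a(n-1)/n}$.

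\textbf{Step 3: summation, and the main obstacle.} Summing over $i$ produces $\int_{\B}|Df|^n(1-|x|)^{n-1+a-a(n-1)/n}$, with a slightly worse weight than Lemma~\ref{lem94} allows. This exponent bookkeeping is the main obstacle. The paper's constraint $a<\frac{p_0-n}{n}$ with $p_0<n^2$ forces $a<n-1$, and the excess exponent $a/n$ must be absorbed. I would handle it by rerunning the dyadic-ring argument of Lemma~\ref{lem94}, inserting the sharper multiplicity $N(f,B(0,1-2^{-j}))\lesssim 2^{ja}$ together with $|f(R_j)|\lesssim \|f\|^n_{\mathcal{H}^n}2^{j(n-1)}$. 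This produces the series $\sum_j 2^{-j(n-1+a)}2^{ja}2^{j(n-1)}2^{ja(n-1)/n}$, which must converge. If it does not converge as written, I would instead apply H\"older's inequality with exponent $p_0/n$ globally, using the restriction $a<(p_0-n)/n$ to choose the weight split so that the geometric series converges. The homogeneity then gives the stated $\|f\|_{\mathcal{H}^n}^n$ dependence, after normalizing $\|f\|_{\mathcal{H}^n}=1$ via scaling $f\mapsto f/\|f\|_{\mathcal{H}^n}$.
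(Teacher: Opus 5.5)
\textbf{Your argument does not close, and it breaks at Step~3.} The series you write,
$\sum_j 2^{-j(n-1+a)}2^{ja}2^{j(n-1)}2^{ja(n-1)/n}=\sum_j 2^{ja(n-1)/n}$, diverges for every $a>0$. With the honest volume bound $|f(R_j)|\lesssim\|f\|_{\mathcal{H}^n}^n2^{j(n-1)}$, the first three factors alone already give $\sum_j 1$, so the dyadic-ring argument is borderline divergent even before the extra factor enters.

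Your bookkeeping is also too optimistic. Since $\a_{f,\lambda}^a=\big(\a_{f,\lambda}(1-|x|)\big)^a(1-|x|)^{-a}$, you dropped a factor $(1-|x_i|)^{-a}$. Steps~1--2 actually leave the weight $(1-|x|)^{n-1-a(n-1)/n}$ on $|Df|^n$, which is worse than what you wrote. A smaller point: \eqref{cond-m} and the growth bound control $N(f,\lambda B_x)$ and $\sup_{\lambda B_x}|f|$ only for $\lambda<2$, whereas $2B_x$ reaches $\Sn$.

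The fallback, ``apply H\"older with exponent $p_0/n$ globally and choose the weight split'', is not an argument. The hypothesis $a<\frac{p_0-n}{n}$ is never genuinely used, since your local reverse H\"older step only needs $n+a<p_0$.

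The paper's sketch is organised differently. It makes a global H\"older split with exponents $\frac{n}{n-1}$ and $n$:
\[
\int_{\B}|Df|^{n+a}w\,\ud x\le\Big(\int_{\B}|Df|^{n}w\,\ud x\Big)^{\frac{n-1}{n}}\Big(\int_{\B}|Df|^{n(1+a)}w\,\ud x\Big)^{\frac1n},\qquad w=(1-|x|)^{n-1+a}.
\]
Lemma~\ref{lem94} handles the first factor. The second is handled by reverse H\"older at the exponent $n(1+a)<p_0$, which is precisely the hypothesis on $a$, together with a dyadic-ring argument.

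\textbf{The divergence you hit is genuine, so no choice of split will repair it.}

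First, homogeneity. The map $f\mapsto tf$ ($t>0$) preserves every hypothesis: distortion, \eqref{cond-m}, \eqref{est-min}, and omitting $0$. It multiplies $\|f\|_{\mathcal{H}^n}$ by $t$ and the left-hand side by $t^{n+a}$. So your closing normalization yields $C\|f\|_{\mathcal{H}^n}^{n+a}$, and a bound by $C\|f\|_{\mathcal{H}^n}^{n}$ is impossible for $a>0$.

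Second, even finiteness fails. Put $f(x)=R\big((x-e_1)|x-e_1|^{-s-1}\big)-e_1$, where $R$ is the reflection $y_2\mapsto-y_2$. For $s\in[\frac{n-1}{n+1},1)$ this map has the following properties.
\begin{itemize}
\item It is injective, so \eqref{cond-m} holds with every $a\ge 0$.
\item It is $K_*$-quasiconformal with $K_*=\big(\frac{n+1}{n-1}\big)^{n-1}$.
\item It takes values in $\{y_1<-1\}$. Hence every $f\circ T_x^{-1}$ omits the nonnegative $y_1$-axis, and $f$ is in \eqref{est-min} by Theorem~\ref{thm-min}.
\item $|f(x)|\approx|x-e_1|^{-s}$ and $|Df(x)|=|x-e_1|^{-s-1}$.
\end{itemize}
Consequently $f\in\mathcal{H}^n$ whenever $sn<n-1$. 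On the other hand, consider the cone $\{e_1-\rho\theta:0<\rho<\frac12,\ \theta\in\Sn,\ \theta_1\ge\frac12\}$, on which $1-|x|\ge\rho/4$. Integrating over it gives
\[
\int_{\B}|Df|^{n+a}(1-|x|)^{n-1+a}\,\ud x\gtrsim\int_0^{1/2}\rho^{\,n-2-s(n+a)}\,\ud\rho=\infty\qquad\text{if } s(n+a)\ge n-1.
\]
Choose $0<a<\min\{1,\frac{p_0-n}{n}\}$ with $p_0=p_0(n,K_*)<n^2$, and $s\in[\frac{n-1}{n+a},\frac{n-1}{n})$. Every hypothesis of the corollary is then met, yet the left-hand side is infinite.

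The same map also makes the second factor in the paper's split infinite. So the paper's sketch does not establish the statement either. At least the power of $\|f\|_{\mathcal{H}^n}$ and the admissible range of $a$ must be corrected before any proof can go through.
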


Since in what follows we will not appeal to the corollary, we only sketch its proof. By the H\"older inequality and Lemma~\ref{lem94} one gets that
\begin{align*}
  \int_{\B}|Df(x)|^{n+a}(1-|x|)^{n-1+a}\,\ud x 
 \leq  C \|f\|_{\mathcal{H}_n}^{n-1} \left(\int_{\B}|Df(x)|^{n(a+1)}\,(1-|x|)^{n-1+a}\,\ud x \right)^{\frac1n}.
 \end{align*}
Then, in order to estimate the integral on the right-hand side, one employs the technique similar to the one in the proof of Lemma~\ref{lem94} and the Jones Lemma 3.5 in~\cite{ag2}, see also the proof of Theorem 4.1 in~\cite{kmv}. The proof follows by applying the reverse H\"older inequality, \cite[Theorem 5.1]{bi}.

Next we show a counterpart of Lemma 7.5 in~\cite{ak}. In order to prove it we need an $n$-dimensional analog of Lemma 3.3 in~\cite[Ch VI]{g}, see also Theorem 1.2 in~\cite{adgr} for a counterpart of this lemma in the setting of the Heisenberg group $\mathbb{H}_1$. Lemma~\ref{lem-gar33} below characterizes the Carleson measures on the unit ball $\B$ in terms of the M\"obius transformations on $\B$. In order to motivate and explain this result let us present it in the particular case of the unit disk $\mathbb{D}$. Then, the lemma says that \emph{a positive measure $\mu$ on $\mathbb{D}$ is a Carleson measure if and only if the following holds:
\begin{equation}\label{cond:plane}
 \sup_{z_0\in \mathbb{D}} \int_{\mathbb{D}} \frac{1-|z_0|^2}{|1-\bar{z_0}z|^2} \ud \mu(z)=M<\infty.
\end{equation}
Moreover, the constant $M$ is comparable to the Carleson constant, i.e. $M\approx \gamma_{\mu}$ with absolute constants.
}
Observe that for a given $z_0\in \mathbb{D}$, the integrand in~\eqref{cond:plane} satisfies the following
\begin{equation}\label{eq: mob-plane}
\frac{1-|z_0|^2}{|1-\bar{z_0}z|^2}=\frac{1-\left|\frac{z-z_0}{1-\bar{z_0}z}\right|^2}{1-|z|^2}=\frac{1-|T_{z_0}(z)|^2}{1-|z|^2}=|D T_{z_0}(z)|,
\end{equation}
see Formulas (33) and (34) in~\cite[Ch II]{ahl}. Here $T_{z_0}(z)=e^{-i\theta_0}\frac{z-z_0}{1-\bar{z_0}z}$, for $z_0=r_0 e^{i\theta_0}$ is the M\"obius self-mapping of $\mathbb{D}$ corresponding to maps $T$ discussed in this work.  The relation between~\eqref{eq: mob-plane} and the Carleson condition becomes clear, once we recall that for small enough radii $r>0$, any $\om\in \partial \mathbb{D}$ and $z\in \mathbb{D} \cap B(\om,r)$ it holds that 
\begin{equation}\label{eq:mob-plane2}
\frac{1-|T_{z_0}(z)|^2}{1-|z|^2} \approx \frac{1-|T_{z_0}(z)|}{1-|z|}\approx \frac{1}{r},
\end{equation}
see Lemma 2.2 in~\cite{ag2}. Hence, \eqref{eq: mob-plane} and \eqref{eq:mob-plane2} together with \eqref{cond:plane} imply the Carleson condition for $\mu$:
$$
\mu(\mathbb{D} \cap B(\om,r)) = r \int_{\mathbb{D} \cap B(\om,r)} \frac{1}{r} \ud \mu\lesssim Mr.
$$

\begin{lem}\label{lem-gar33} A measure $\mu$ on the unit ball $\B\subset \Rn$ is a Carleson measure if and only if
 \begin{equation*}
  \sup_{x_0\in \B} \int_{\B} |D T_{x_0}(y)|^{n-1} \ud \mu(y)=M<\infty,
 \end{equation*}
\end{lem}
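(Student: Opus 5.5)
We prove both directions by comparing $|DT_{x_0}|$ with the scale of boxes $B(\om,r)\cap\B$ near the boundary point $\om=x_0/|x_0|$. The normalization is the usual one: $\mu$ is Carleson if $\mu(B(\om,r)\cap \B)\le \gamma_\mu r^{n-1}$ for all $\om\in\Sn$ and $r>0$.

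\textbf{Explicit form of the derivative.} Since $T_{x_0}$ is a M\"obius map, $DT_{x_0}(y)$ is a similarity. Formula~\eqref{def:Tx} gives the standard expression (cf.\ Ahlfors~\cite[Ch.~II]{ahl})
\[
|DT_{x_0}(y)|=\frac{1-|x_0|^2}{|x_0|^2\,|y-x_0^*|^2},\qquad x_0^*:=\frac{x_0}{|x_0|^2}.
\]
This is the $n$-dimensional analogue of~\eqref{eq: mob-plane}. Set $\delta:=1-|x_0|$ and $\om:=x_0/|x_0|$. We need two elementary geometric facts.
\begin{itemize}
\item[(i)] Upper bound: for $y\in B(\om,r)\cap\B$ with $r\ge\delta$,
\[
|y-x_0^*|\le |y-\om|+|\om-x_0^*|\le r+\frac{\delta}{1-\delta}\lesssim r .
\]
\item[(ii)] Lower bound: for all $y\in\B$,
\[
|y-x_0^*|\ge \tfrac12\max\{|y-\om|,\ |\om-x_0^*|\},\qquad |\om-x_0^*|\approx\delta .
\]
Indeed, $\om$ is the point of $\overline{\B}$ nearest to $x_0^*$, so $|\om-x_0^*|\le|y-x_0^*|$. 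The triangle inequality then gives $|y-\om|\le 2|y-x_0^*|$.
\end{itemize}
The case $|x_0|\le \frac12$ is harmless throughout, since then $|DT_{x_0}|\approx 1$ on $\B$. The point $x_0=0$ needs no formula, as $T_0$ is the identity.

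\textbf{($\Leftarrow$) The supremum condition implies the Carleson condition.} Let $\om\in\Sn$ and $r>0$.
\begin{itemize}
\item If $r\ge \frac12$, choose $x_0=0$. Since $|DT_0|\equiv1$, we get $\mu(B(\om,r)\cap\B)\le \mu(\B)\le M\lesssim M r^{n-1}$.
\item If $r<\frac12$, choose $x_0=(1-r)\om$, so that $\delta=r$. By (i), every $y\in B(\om,r)\cap\B$ satisfies $|DT_{x_0}(y)|\gtrsim \delta/r^2=1/r$. Hence
\[
\mu(B(\om,r)\cap\B)\lesssim r^{n-1}\int_{\B}|DT_{x_0}|^{n-1}\ud\mu\le M r^{n-1}.
\]
\end{itemize}
Thus $\gamma_\mu\lesssim M$.

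\textbf{($\Rightarrow$) The Carleson condition implies the supremum condition.} Fix $x_0$ with $|x_0|>\frac12$. Decompose $\B$ dyadically around $\om$:
\[
E_0:=B(\om,2\delta)\cap\B,\qquad E_k:=\big(B(\om,2^{k+1}\delta)\setminus B(\om,2^k\delta)\big)\cap\B,\quad k\ge1 .
\]
Only finitely many $E_k$ are nonempty, since $2^k\delta\le 2$.
\begin{itemize}
\item On $E_0$, fact (ii) gives $|y-x_0^*|\gtrsim\delta$, so $|DT_{x_0}|\lesssim 1/\delta$. Hence
\[
\int_{E_0}|DT_{x_0}|^{n-1}\ud\mu\lesssim \delta^{1-n}\,\mu(E_0)\lesssim \gamma_\mu .
\]
\item On $E_k$, $k\ge1$, fact (ii) gives $|y-x_0^*|\gtrsim 2^k\delta$, so $|DT_{x_0}|\lesssim 2^{-2k}\delta^{-1}$. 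The Carleson bound $\mu(E_k)\le\gamma_\mu(2^{k+1}\delta)^{n-1}$ then yields
\[
\int_{E_k}|DT_{x_0}|^{n-1}\ud\mu\lesssim \gamma_\mu\, 2^{-k(n-1)} .
\]
\end{itemize}
Summing the geometric series, which converges because $n\ge2$, gives a bound $\lesssim\gamma_\mu$ independent of $x_0$. For $|x_0|\le\frac12$ we have directly $\int_{\B}|DT_{x_0}|^{n-1}\ud\mu\lesssim\mu(\B)\lesssim\gamma_\mu$. Therefore $M\approx\gamma_\mu$.

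\textbf{Main obstacle.} The step requiring the most care is the uniform two-sided comparison $|y-x_0^*|\approx \delta+|y-\om|$ on $\B$, namely facts (i) and (ii). I would isolate it as a preliminary claim and verify it directly from the reflection geometry, so that the constants depend only on $n$.
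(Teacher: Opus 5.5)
Your proposal is correct and is exactly the adaptation the paper intends. The paper omits the proof and refers to Garnett's planar Lemma VI.3.3, whose argument tests the supremum at $x_0=(1-r)\omega$ for one direction and, for the other, sums the Carleson bound over the dyadic shells $B(\omega,2^{k+1}\delta)\setminus B(\omega,2^{k}\delta)$. Your write-up supplies the details the paper leaves out: the explicit comparison $|y-x_0^*|\approx \delta+|y-\omega|$, and the observation that $\B\subset B(\omega,2)$ gives $\mu(\B)\lesssim\gamma_\mu$ when $|x_0|\le\frac12$.
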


The proof of the lemma is a simple adaptation of the proof of its planar counterpart, Lemma 3.3, presented on pg. 232 in~\cite[Ch VI]{g}. Since the geometry of sectors in the plane, employed in the proof in~\cite{g}, and in $\Rn$ is the same, the proof holds almost verbatim upon natural modifications and, therefore, we omit it.

The following result is a counterpart of Lemma 7.5 in~\cite{ak} and we retrieve its assertion for $a=0$.

\begin{lem}\label{lem75}
 Let $n \geq 2$ and $f:\B\to \Rn \setminus \{0\}$ be a $K$-quasiregular mapping in class~\eqref{est-min} satisfying the multiplicity condition~\eqref{cond-m} with some $0\leq a<n-1$. If $\tilde{f} \in BMO(\Sn)$, then the following expression defines the Carleson measure on $\Sn$:
 \[
  \ud \mu= |Df(x)|^{n}(1-|x|)^{n-1+a}\,\ud x.
 \]
\end{lem}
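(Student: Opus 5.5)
The plan is to combine the Möbius characterization of Carleson measures (Lemma~\ref{lem-gar33}) with the global energy estimate of Lemma~\ref{lem94}, applied to the recentred maps $g_{x_0}:=f\circ T_{x_0}^{-1}-\tilde f(T_{x_0}^{-1}(0))$-type normalizations. By Lemma~\ref{lem-gar33} it suffices to show
\[
\sup_{x_0\in\B}\int_{\B}|DT_{x_0}(y)|^{n-1}|Df(y)|^n(1-|y|)^{n-1+a}\,\ud y<\infty .
\]

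\textbf{Step 1: change of variables.} Substitute $y=T_{x_0}^{-1}(x)$. By conformality $|Dg(x)|=|Df(y)|\,|DT_{x_0}^{-1}(x)|$ for $g:=f\circ T_{x_0}^{-1}$, and $\ud y=|DT_{x_0}^{-1}(x)|^n\ud x$. Together with the identity $1-|T_{x_0}(y)|\approx |DT_{x_0}(y)|(1-|y|)$ (the $n$-dimensional analogue of \eqref{eq: mob-plane}), the integrand transforms, up to constants depending on $n$, into $|Dg(x)|^n(1-|x|)^{n-1}(1-|y|)^{a}$. Since $1-|y|\le 1$ and $a\ge 0$, the weight $(1-|y|)^a$ is harmless, although it could alternatively be absorbed as $(1-|x|)^a$ up to the factor $|DT|^{-a}$. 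Hence it suffices to bound $\int_{\B}|Dg|^n(1-|x|)^{n-1+a}\ud x$, or even $\int_{\B}|Dg|^n(1-|x|)^{n-1}$, uniformly in $x_0$.

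\textbf{Step 2: apply Lemma~\ref{lem94} to a normalized map.} Let $c:=\tilde f(T_{x_0}^{-1}(0))$ or a suitable average, and set $h:=g-c$. By \eqref{cond-m} and the remark after the Miniowitz definition, $g$ satisfies \eqref{precond-m} with constants independent of $x_0$, and $Dh=Dg$. Lemma~\ref{lem94} in its Hardy form gives
\[
\int_{\B}|Dh|^n(1-|x|)^{n-1+a}\lesssim \|h\|_{\mathcal H^n}.
\]
Strictly, Lemma~\ref{lem94} gives a bound linear in the growth constant $C$, which is controlled by $C(n,K)\|h\|_{\mathcal H^n}$ or a power of it. Since $h$ may vanish, I would apply Lemma~\ref{lem94} with its change-of-variables proof directly, which does not use $h\neq0$. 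To control $\|h\|_{\mathcal H^n}$, use Theorem~\ref{thm11-ag2}, or its proof for $h$ (again possibly via $f-y$ tricks as in the proof of Theorem~\ref{thm:characterizationsnolder}), to bound it by $\|\tilde h\|_{L^n(\Sn)}=\|\tilde f\circ T_{x_0}^{-1}-c\|_{L^n(\Sn)}$.

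\textbf{Step 3: BMO invariance.} By John--Nirenberg and conformal invariance of BMO under Möbius maps of the sphere (Garnett's argument: $\int|\varphi\circ T^{-1}-\varphi(\cdot)|^n\ud\sigma\lesssim\|\varphi\|_{BMO}^n$ using the Poisson kernel $P_{x_0}$ which is the Jacobian of $T_{x_0}^{-1}$ on $\Sn$), this is bounded by $C\|\tilde f\|_{BMO}^n$ uniformly in $x_0$, with $c$ chosen as the Poisson average $\int\tilde f\,P_{x_0}\ud\sigma$. This yields the uniform bound and the Carleson property.

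\textbf{Main obstacle.} The delicate step is Step 2. Theorem~\ref{thm11-ag2} requires a map omitting $0$ in the Miniowitz class, whereas $h=g-c$ may take the value $0$, and the equivalence constants must be uniform in $x_0$. The first thing I would try is to avoid the Hardy norm and bound $|h(rx)|$-averages directly: estimate $\int_{\B}|Dh|^n(1-|x|)^{n-1+a}$ via the ring decomposition of Lemma~\ref{lem94} with $|h(R_j)|\le|h(B(0,1-2^{-j}))|$. Then control $\sup_{|x|<1-2^{-j}}|h|$ through the non-tangential maximal function of $h$, arguing with $f-y$ for $y\notin f(\B)$ so that the Miniowitz-class results of \cite{ag2} apply.
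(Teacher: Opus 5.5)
Your skeleton (Lemma~\ref{lem-gar33}, the conformal change of variables, Lemma~\ref{lem94}, M\"obius invariance of $BMO(\Sn)$) is the one the paper uses, but the argument does not close.

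\textbf{Step~1 does not handle the weight $(1-|y|)^a$ when $a>0$.} Discarding it via $(1-|y|)^a\le 1$ reduces you to the unweighted energy $\int_{\B}|Dg|^n(1-|x|)^{n-1}\,\ud x$. Lemma~\ref{lem94} does not provide this: there the extra factor $(1-|x|)^a$ is exactly what absorbs the multiplicity growth $N(f,B(0,1-2^{-j}))\lesssim 2^{ja}$. Absorbing the weight pointwise instead, $(1-|y|)^a\approx(1-|x|)^a|DT_{x_0}(y)|^{-a}$, also fails, because $|DT_{x_0}(y)|^{-a}$ is of order $(1-|x_0|)^{-a}$ near $-x_0/|x_0|$. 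So Step~2, which only yields the $(n-1+a)$-weighted bound, does not feed into Step~1 as you justified it. The missing idea is the paper's H\"older splitting. With $T:=T_{x_0}$, $w:=|Df|^n(1-|y|)^{n-1+a}$ and exponents $\frac{n-1+a}{n-1}$, $\frac{n-1+a}{a}$,
\[
\int_{\B}w\,|DT|^{n-1}\,\ud y\le\Big(\int_{\B}w\,|DT|^{n-1+a}\,\ud y\Big)^{\frac{n-1}{n-1+a}}\Big(\int_{\B}w\,\ud y\Big)^{\frac{a}{n-1+a}}.
\]
Now the full power $|DT|^{n-1+a}$ is absorbed by $(1-|y|)|DT(y)|\approx 1-|T(y)|$. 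The first factor becomes the $(n-1+a)$-weighted energy of $f\circ T^{-1}$ and the second is that of $f$. So only the weighted estimate of Lemma~\ref{lem94} is ever needed, uniformly in $x_0$.

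\textbf{Step~2 is left open, as you say yourself.} You need $\|g-c\|_{\mathcal H^n}\lesssim\|\tilde g-c\|_{L^n(\Sn)}$ uniformly in $x_0$. Theorem~\ref{thm11-ag2} does not apply to $g-c$, which need not omit $0$ or lie in the class~\eqref{est-min}. Translating by a point $y\notin f(\B)$ instead reintroduces the constant $|c-y|$, which $\|\tilde f\|_{BMO}$ does not control.

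The paper does not normalize at all. It applies Theorem~\ref{thm11-ag2} to the nonvanishing map $f\circ T_{x_0}$ and invokes $BMO(\Sn)\subset L^n(\Sn)$ together with the $x_0$-uniform BMO bounds for $\tilde f\circ T_{x_0}$. Your suspicion that uniformity is the real issue is justified: BMO controls $\tilde f\circ T_{x_0}$ in $L^n$ only up to an additive constant. For example, take the univalent $f(z)=\log(1-z)-10$ on the disc. Then $\|f\circ T_{x_0}^{-1}\|_{\mathcal H^2}\ge|f(x_0)|\to\infty$ as $x_0\to1$, even though the Carleson conclusion holds. So that shortcut does not give a uniform $\mathcal H^n$ bound either.

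A complete argument needs an estimate of the weighted energy of $f\circ T_{x_0}^{-1}$ in terms of the oscillation of its boundary map alone, for instance a version of Theorem~\ref{thm11-ag2} valid for $g-c$. Your proposal identifies this need but does not supply it.
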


\begin{proof} We begin with the discussion of relations between the non-tangential limit map $\tilde{f}\in BMO(\Sn)$ and maps $g:=f\circ T_{x_0}$. One way to pursue such relations is through the Poisson extension and the equivalent BMO-norms, see e.g. proofs of Lemmas 7.5 and 7.6 in~\cite{ak}. However, we refer instead to the discussion on the BMO on metric measure spaces in~\cite{kkms}, see~\cite{go} for the corresponding discussion in $\Rn$ and cf. also Corollary 1.4 in~\cite[Ch VI]{g} for the $\mathbb{S}^1$ setting. First note that by the Carath\'eodory extension theorem any automorphism $T_{x_0}$ extends to a homeomorphism of the unit spheres (again denoted by $T_{x_0}$ for the simplicity of presentation). Moreover, by Formula (33) in~\cite[Chapter 2]{ahl} the formula defining $T_{x_0}$ in $\B$ remains true also for points in $\Sn$. Since 
$$
|x_0|~\Big|y-\frac{x_0}{|x_0|^2}\Big|=\Big|\frac{x_0 |x_0|}{|x_0|^2}-y|x_0|\Big|\geq 1-|y||x_0|= 1-|x_0|
$$
we have that by Formula (30) in~\cite[Chapter 2]{ahl} for all $x_0\in \B$ it holds 
\[
 |DT_{x_0}(y)|=\frac{1-|x_0|^2}{|x_0|^2|y-\frac{x_0}{|x_0|}|^2}=\frac{1-|x_0|^2}{(1-|x_0|)^2}\leq \frac{2}{1-|x_0|}.
\]
Hence, we get that for all $y,z \in \Sn$ it holds 
\[
 d_{\Sn}(T_{x_0}(y),T_{x_0}(z))\approx \frac{1}{1-|x_0|} d_{\Sn}(y,z),
\]
see also the discussion for $T_{x_0}$ in~\cite[Section 2]{ak}. Hence, for any ball $B\subset \Sn$ and any measurable subset $E\subset \Sn$ we get that 
\[
\sigma(T_{x_0}^{-1}(E)\cap B)\approx_{n} (1-|x_0|) \sigma(E).
\]
This observation, together with the compactness of $\Sn$ imply that the condition (ii) in~\cite[Theorem 3.1]{kkms} is satisfied with $\gamma:=(1-|x_0|)\lambda<\lambda$, for $\lambda>0$. Therefore, by \cite[Proposition 3.8]{kkms} the operators $C_{x_0}:=\tilde{f}\circ T_{x_0}:\Sn\to \Sn$ have their BMO (semi)norms bounded $\|C_{x_0}\|\leq C(n)$; in particular norms are independent of points $x_0$. 
Furthermore, recall that by compactness of $\Sn$, the $BMO(\Sn)\subset L^p(\Sn)$ for any $p>0$ and so, in particular, for $p=n$. Since, for any $x_0\in \B$, the map $g:=f\circ T_{x_0}$ is quasiregular, belongs to the Miniowitz class \eqref{est-min} and satisfies~\eqref{cond-m}, we may now apply Theorem~\ref{thm11-ag2} to obtain that  $\|g\|_{\mathcal{H}^n}<C$. Therefore, by Lemma~\ref{lem94} we get that
\begin{equation}\label{eq1-lem75}
\int_{\B} |D(f\circ T)(x)|^{n}(1-|x|)^{n-1+a}\,\ud x \leq C.
\end{equation}

We estimate the following integral, whose finiteness results in the assertion, see details below. By the H\"older inequality we have that
\begin{align}
&\int_{\B} |Df(x)|^{n}\,(1-|x|)^{n-1+a}\,|DT^{-1}(x)|^{n-1}\,\ud x \label{eq3-lem75}\\
&= \int_{\B} \Big(|Df(x)|^{n\frac{n-1}{n-1+a}}\,(1-|x|)^{n-1}\,|DT^{-1}(x)|^{n-1}\Big)\Big(|Df(x)|^{n\frac{a}{n-1+a}}\,(1-|x|)^{a}\Big)\,\ud x \nonumber \\
&\leq \left(\int_{\B} |Df(x)|^{n}\,(1-|x|)^{n-1+a}\,|DT^{-1}(x)|^{n-1+a}\,\ud x\right)^{\frac{n-1}{n-1+a}}
\left(\int_{\B} |Df(x)|^{n}\,(1-|x|)^{n-1+a}\,\ud x\right)^{\frac{a}{n-1+a}}. \nonumber
\end{align}
Lemma~\ref{lem94} gives us that the second integral on the right-hand side above is finite.  In order to estimate the first  integral, we proceed as follows. First, we recall a property of the M\"obius automorphisms $T_{x_0}$ of $\B$ that
\[
 1-|T_{x_0}^{-1}(y)|\approx_{n, |x_0|} |DT_{x_0}^{-1}| (1-|y|),\quad y\in \B.  
\]
This formula follows from (14) and (17) in~\cite[Ch 2]{ahl}, see also Appendix in~\cite{ag2}.

By using this formula together with the change of variables for a M\"obius transformation $T:=T_{x_0}$ and by the fact that each $T$ satisfies $|DT|^n=J_T$ a.e. in $\B$, we observe that
\begin{align}
& \int_{\B} |Df(x)|^{n}\,(1-|x|)^{n-1+a}\,|DT^{-1}(x)|^{n-1+a}\,\ud x \nonumber \\
&\phantom{AAA} \approx \int_{\B} |Df(x)|^{n}\, (1-|T^{-1}(x)|)^{n-1+a}\,\ud x  \nonumber \\
&\phantom{AAA} =\int_{\B} |Df(x)\,DT(T^{-1}(x))\,\big(DT(T^{-1}(x))\big)^{-1}|^{n}\, \,(1-|T^{-1}(x)|)^{n-1+a}\,\ud x \nonumber \\
&\phantom{AAA} \leq  \int_{\B} |Df(x)\,DT(T^{-1}(x))|^{n}\,|DT(T^{-1}(x))|^{-n}\,(1-|T^{-1}(x)|)^{n-1+a}\,\ud x \nonumber \\
&\phantom{AAA} = \int_{\B} |Df(x)\,DT(T^{-1}(x))|^{n}\,J_{T^{-1}}(x)\,(1-|T^{-1}(x)|)^{n-1+a}\,\ud x \nonumber \\
&\phantom{AAA} = \int_{\B} |Df(T(y)) DT(y)|^{n}\,(1-|y|)^{n-1+a}\,J_T(y) J_{T^{-1}}(T(y)) \,\ud y \nonumber \\
&\phantom{AAA} =\int_{\B} |D(f\circ T)(y)|^{n}(1-|y|)^{n-1+a}\,\ud y \leq C, \label{eq2-lem75}
\end{align} 
where the last estimate holds by~\eqref{eq1-lem75}.  Finally, upon combining~\eqref{eq2-lem75} with~\eqref{eq1-lem75} we arrive at the following estimate, cf.~\eqref{eq3-lem75}:
\[
 \sup_{T_{x_0}} \int_{\B} |Df(y)|^{n}\,(1-|y|)^{n-1+a}\,|DT^{-1}|^{n-1}\,\ud x \leq C,
\]
which upon applying Lemma~\ref{lem-gar33} results in the assertion. (Since the supremum in this lemma is taken over all $x_0\in \B$ and $T_{x_0}^{-1}=T_{-x_0}$, we may without the loss of generality consider as well $DT^{-1}$ in the last estimate above, instead of $DT$ as in the assertion of Lemma~\ref{lem-gar33}.)
\end{proof}

We are in a position to show the main result of this section, a counterpart of Theorem 7.2 in~\cite{ak} for quasiregular mappings. It turns out that to ensure a chain of implications, the standard assumptions imposed on a map throughout this work are enough, see Introduction for the statement of Theorem~\ref{thm72}. However, in order to obtain the characterization result, the additional finite multiplicity assumption is needed. 
\begin{proof}[Proof of Theorem~\ref{thm72}]
First, note that since map $f$ satisfies the Miniowitz estimate, the growth condition~\eqref{cond-g} holds in particular. This together with the assumptions of Theorem~\ref{thm72} allow us to apply Theorem 3.2 in~\cite{ag2} and conclude that the non-tangential limit function $\tilde{f}$ exists a.e. in $\Sn$.

\smallskip

\noindent $(1)\Leftrightarrow (2)$. For $n>1$ one repeats the discussion for the unit circle $\mathbb{S}^1$ presented in~\cite[Chaper VI.1]{g}, upon necessary modifications. In particular, by the proof of Corollary 1.4 in~\cite[Ch VI]{g} the reasoning consists of the following steps: first, we consider an equivalent (semi)norm in $BMO(\Sn)$ defined by the formula 
$$
\|\tilde{f}\|':=\sup_{x_0\in \B} \vint_{\Sn} \bigg|\tilde{f}(z)-\vint_{\Sn} \tilde{f}(\om) P_{z}(\om) \ud\om\bigg|P_{x_0}(z)\ud z,
$$
where $P_{a}(b)$ stands for the Poisson kernel on $\B$. Then, we show its invariance with respect to automorphisms $T_{x_0}$ via the change of variables formula. Thus, $\|\tilde{f}\circ T_{x_0}\|'\approx \|\tilde{f}\|_{BMO(\Sn)}<\infty$
and this equivalence relation results in the equivalence between  the assertions (1) and (2) as well.
\smallskip

\noindent $(2)\Rightarrow (3)$. As in the first part of the proof of Lemma~\ref{lem75} we observe that, since $BMO(\Sn)\subset L^n(\Sn)$ the boundary maps $g:=\tilde{f}\circ T_{x_0}\in L^p(\Sn)$. As in that proof we may appeal to Theorem~\ref{thm11-ag2} to obtain that  $\|g\|_{\mathcal{H}^n}<C$. Then, assertion (3) follows from Lemma~\ref{lem75}.

\smallskip

\noindent $(3)\Rightarrow (4)$. We estimate directly and obtain that for any $x\in \B$
\begin{align*}
\a_f(x) & = \left(\frac{1}{|2B_x|} \int_{B_x}|Df(y)|^n \ud y\right)^{\frac1n}  \\
& = \left(\frac{1}{|2B_x|} \int_{B_x}|Df(y)|^{n} (1-|y|)^{n-1+a}\,(1-|y|)^{-(n-1+a)} \ud y\right)^{\frac{1}{n}} \\
& \approx_{n} \frac{1}{1-|x|}\,\frac{1}{\,(1-|x|)^{\frac{n-1+a}{n}}} \left(\int_{B_x}|Df(y)|^{n} (1-|y|)^{n-1+a}\ud y\right)^{\frac{1}{n}} \\
& \leq \frac{1}{\,(1-|x|)^{\frac{2n-1+a}{n}}} \left(\int_{\B\cap B_{\frac{x}{|x|}}(1-|x|)}|Df(y)|^{n} (1-|y|)^{n-1+a} \ud y\right)^{\frac{1}{n}} \\
& \lesssim \frac{1}{\,(1-|x|)^{\frac{2n-1+a}{n}}} (1-|x|)^{\frac{n-1}{n}}=(1-|x|)^{-\frac{n+a}{n}},
\end{align*}

where in the last step we use the Carleson measure condition in assertion (3). Moreover, we also appeal to an observation that on a hyperbolic ball $B_x$ it holds that $1-|y|\approx 1-|x|$.
\smallskip

\noindent $(4)\Rightarrow (1)$. When $N(f, \B)<\infty$, we may let $a=0$ in which case the assertion $(4)$ takes the following simple form 
\begin{equation}\label{assert4-a-zero}
\a_f(x) (1-|x|)\leq C\, \hbox{ for all } x\in\B.
\end{equation}
Let us consider the function $v:=\Nm(\a_f(x) (1-|x|))$ as in Proposition~\ref{prop:Lp-norm-bound} applied for $\beta=1$ and $\eta=1$ and note that, by the assertion $(4)$, it holds that $v\in L^p(\Sn)$ for all $0<p<\infty$. Hence, Proposition~\ref{prop:Lp-norm-bound} gives us that $\tilde{f}\in L^p(\Sn)$ and so assertion $(1)$ follows by considering maps $g(y):=\tilde{f}(T_{x_0}(y))-\tilde{f}(T_{x_0}(0))$ for a fixed $x_0\in \B$ and applying the characterization of the BMO($\Sn$) space given by the equivalence between assertions (1) and (2) of the theorem. In consequence, we obtain equivalences between conditions $(1)$-$(4)$ under the assumption that $N(f, \B)<\infty$.
\smallskip

\noindent $(4)\Rightarrow (6)$ Let us recall the following result by Staples specialized to our setting, see Corollary 2.26 in~\cite{st}:
\medskip

\emph{If $\sup_{2B\subset \B} \vint_{B}|f-f_{B}| \leq c_0$, then $f\in BMO(\B)$ and $\|f\|_{BMO(\B)}\leq c(n) c_0$.
}
\medskip

\noindent The above result roughly says, that in order to determine whether a map belongs to a BMO space, it is sufficient to check the definition only for balls enough away from the boundary $\Sn$, i.e. for such balls $B\subset \B$ that $2B\subset \B$. 

By the H\"older and the Poincar\'e inequalities and by the doubling property of the Lebesgue measure, we have that for any ball $B$, centered at point $x\in \B$, such that $B\subset 2B\subset \B$ it holds 
\begin{align*}
\vint_{B}|f(y)-f_{B}|\ud y &\leq  \left(\vint_{B}|f(y)-f_{B}|^n\ud y\right)^{\frac1n} \\
&\lesssim_{n}   \diam B \left(\vint_{B} |Df(y)|^n\ud y\right)^{\frac1n} \\
&\lesssim_{n, c_{\mathcal{L}^n}} \left(\int_{B} |Df(y)|^n\ud y\right)^{\frac1n} \lesssim_{n, c_{\mathcal{L}^n}} \left(\int_{B_x} |Df(y)|^n\ud y\right)^{\frac1n} \\
& \approx_{n, c_{\mathcal{L}^n}} (1-|x| )\left(\frac{1}{|2B_x|} \int_{B_x} |Df(y)|^n\ud y\right)^{\frac1n} \\
& \approx_{n, c_{\mathcal{L}^n}} (1-|x|) a_f(x) \leq C <\infty.
\end{align*}
Here, we also appeal to an observation that a ball $B=B(x,r)$ with $2B\subset \B$ satisfies that $B\subset B_x$.

\noindent $(6)\Rightarrow (5)$ immediately.
\smallskip

\noindent $(5)\Rightarrow (4)$ 
First, since $f^j\in BMO(\B)$ for all $j=1,\ldots, n$, we employ a variant of the BMO spaces description provided in 
Str\"omberg~\cite{str}, see also Ex. 4 in~\cite[pg.  261, Ch VI]{g} for the formulation in $\R$ and Appendix in~\cite{agg} for the $\Rn$ case. As consequence, we get
\[
 \int_{B_x} |f_j-(f_j)_{B_x}|^n \leq C(n, K) |B_x|.
\]
This, combined with the Caccioppoli inequality for component functions of a quasiregular map, see~\cite[Proposition 2.6]{no} and~\cite[Formula (5.3)]{in}, gives that $\int_{B_x} |Df|^n\leq C(n, K)$ for all $x\in \B$ and so, equivalently, that
$\a_f(x)(1-|x|)\leq C(n, K)$. This, however, is the condition $(4)$ for $a=0$, cf. the discussion at~\eqref{assert4-a-zero}.
\end{proof}

\begin{remark}
 Note that if the quasiregular map $f$ is as in Theorem~\ref {thm72} and satisfies that $N(f,\B)<\infty$, then $(4)\Rightarrow (5)$ by the first part of Theorem 1.4 in~\cite{no}.
\end{remark}

\section{Applications to elliptic PDEs}

The purpose of this section is to briefly address how some of the above results transfer to the properties of elliptic PDEs in the plane and beyond it. Our discussion serves as an illustration of relations between PDEs and quasiregular mappings.

\subsection{Case $n=2$} Recall that if $u\in W^{2,2}_{loc}(\Om)$ is a solution to the planar uniformly elliptic PDE with bounded measurable coefficients $a, b, c$, i.e. $Lu=au_{xx}+2bu_{xy}+cu_{yy}=0$, then its complex gradient $f:=u_x-{\rm i}u_y=(u_x,-u_y)$ is a $K$-quasiregular mapping with $K=K(\lambda,\Lambda)$, where $0<\lambda\leq \Lambda$ are the bounds in the uniform ellipticity condition for $L$. Moreover, the opposite relation holds as well, namely a $K$-quasiregular map defines a uniformly elliptic operator $L$, see~\cite[Chapter 12]{gt} and~\cite[Theorem 3]{man}. Let us also add that the Stoilow factorization and the theory of generalized analytic functions, see~\cite{vek} and~\cite[Chapter 6]{bjs}, allow to observe that the complex gradient $f$ is quasiregular also for $W^{1, p}_{loc}$-solutions of elliptic equations in the divergence form $\mathcal{L}u={\rm div}(A(x,y) \nabla u)=0$, see e.g.~\cite[Section 2.3]{mag} and even for degenerate quasilinear equations, such as the $p$-harmonic one, see~\cite{man, arli} and~\cite[Chapter 16]{aim}.  

In order to apply our results to a quasiregular map $f$ given by the complex gradient of the elliptic equation, we need to ensure the three conditions to hold:
\smallskip

\noindent (1) $f\not=0$,\\
\noindent (2) the multiplicity condition~\eqref{cond-m}, and \\
\noindent (3) the Miniowitz condition~\eqref{est-min}.
\smallskip

The first condition reads that the gradient $\nabla u\not= 0$ in $\BB$.

Upon applying the Stoilow factorization, we obtain that $f=h\circ g$ for holomorphic function $h$ and quasiconformal change of variables $g$ in $\BB$, and so by Definition~\ref{def-multi} the multiplicity condition reads for all $0<r<1$ and all $x\in \BB$
\begin{equation*}
 N(f, B_\rho(x, r))=N(h \circ g, B_\rho(x, r))=N(h, g(B_\rho(x, r)))\leq \frac{C}{(1-r)^a},\quad 0\leq a<1.
\end{equation*}
Therefore, the multiplicity conditions transfers naturally to the planar holomorphic function $h$ and images of hyperbolic balls under quasiconformal map $g$. Similar growth results appeared in the literature for the value distribution theory, see e.g.~\cite{Lo}, \cite[Chapter V]{Ric}, \cite[Chapter 11]{vuo2} and~\cite{Ak}.

It amounts to checking the Miniowitz condition, which as presented below follows from the gradient estimates for elliptic PDEs.
Suppose that the coefficients $a,b,c$ are H\"older regular in $\BB$ for the non-divergence equation $Lu$=0 (respectively, the matrix $A$ has $C^1$-entries for the above equation in the divergence form $\mathcal{L}u=0$). By the Schauder theory and Theorem 12.4 (or Corollary 6.3) in~\cite{gt}, applied to balls $B(0,r)\subset \BB$ we have the upper Miniowitz estimate for $Lu=0$ and $z=(x,y)$:
\begin{equation}\label{est-up-grad}
\frac{|\nabla u(z)|}{|\nabla u(0)|}\leq \frac{C(\lambda, \Lambda)\|u\|_{L^\infty(\BB)}}{(1-|z|)^{s}},\quad s=s(\lambda, \Lambda).
\end{equation}
The analogous estimate holds for solutions of the divergence type equation $\mathcal{L}u=0$, see~\cite[Theorem 13.1]{gt}.

In order to show the lower part of condition \eqref{est-min} we will appeal to the fact that $\nabla u\not =0$ in $\BB$ and so we will have that 
\begin{equation}\label{est-Aless}
 \frac{|\nabla u(z)|}{|\nabla u(0)|}\geq C \geq C (1-|z|)^{s},\quad z\in \BB,
\end{equation}
for the constant $C$ depending on $L$ and the geometry of the space. The estimates similar to~\eqref{est-Aless} have been studied in the literature and so, instead of repeating their technical details, we present only the sketch of the reasonings in the proofs of Theorem 2.1 in~\cite{al2}, see pg. 240-242 there and of Theorem 1.3 in~\cite{al1}, see pg. 273-275 there. By the direct formal computations we find that $|\nabla u|^2$ is a solution in $\BB$ of the following equation
\[
L|\nabla u|^2={\rm tr}(A \cdot [H(u)^2]):=F,
\]
where $A$ is the symmetric matrix with entries $a,b,c$ and $H(u)$ stands for the Hessian matrix of $u$. The justification that $L$ can be evaluated on $|\nabla u|^2$ follows the same lines as the proofs of Lemma 2.1 and 2.3 in~\cite{al2} and, therefore, we omit it. Moreover, by the uniform ellipticity condition we have that 
\[
0\leq {\rm tr}(A \cdot [H(u)^2])\leq \Lambda |H(u)|^2
\]
As in the proof of~\cite[Theorem 2.1]{al2}, see the paragraph following (2.32) on pg. 241, we appeal to the Morrey inequality, the Schauder estimates for the equation $Lv=F$ and get that for balls $B_r\subset B_R\Subset \BB$ it holds
\begin{align*}
 \inf_{B_r}|\nabla u| \gtrsim (R-r)^{1-\frac{2}{q}} \|H(u)\|_{L^p(B_R)} 
\end{align*}
for the appropriate choice of $R$ and $q>2$, e.g. $R-r\leq 1-R$ and $(R-r)^{1-\frac{2}{q}} \geq (1-R)^{4}$.
Similar argument gives us also the lower bound for divergence type equations, see the proof of Theorem 1.3 in~\cite{al1}, pg. 273-275.

As consequence of the above discussion we arrive at the following observations:
\begin{itemize}
\item[(1)]
Since the upper estimate in~\eqref{est-min} transfers to the growth condition on $|f|=|\nabla u|$, cf.~\eqref{cond-g}, the multiplicity condition~\eqref{cond-m} and~\eqref{est-up-grad} give the hypotheses of Theorem 4.1 in~\cite{kmv}, and so the vector field $f=(u_x, -u_y)$ has non-tangential limits at $\partial \BB$ at all points except for the set of the Hausdorff dimension ${\rm dim}_{\mathcal{H}}\leq \frac{2a}{1+a}$. Therefore, the non-tangential limit map $\tilde{f}$ exists a.e. and is well-defined on $\mathbb{S}^1$ 
\item[(2)] If $\tilde{f}\in L^p(\mathbb{S}^1)$, then the implication $(1)\Rightarrow (2)$ in Theorem~\ref{thm:characterizationsnolder} applied to $f$, gives us for $0<p<1$ that it holds 
$$
\int_{\B}\a_f^p(x)\, (1-|x|)^{p-1}\ud x<\infty,
$$
the estimate which does not follow from the Schauder estimates for $|H(u)|$. Indeed, by the Schauder estimates for solutions of $Lu=F$, see e.g. Theorem 6.2 and Corollary 6.3 in~\cite{gt}, we get that 
\begin{equation}\label{est-af-Sch}
 \a_f^2(x)=\frac{1}{|2B_x|} \int_{B_x}|H(u)|^2 \lesssim \frac{1}{|2B_x|} \int_{B_x} \frac{\|u\|^2_{L^{\infty}(\BB)}}{(1-|x|)^4}\approx \frac{\|u\|^2_{L^{\infty}(\BB)}}{(1-|x|)^4}.
\end{equation}
Thus, 
\[
 \int_{\B}\a_f^p(x)\, (1-|x|)^{p-1}\ud x \approx_{\|u\|_{L^{\infty}(\BB)}} \int_{\B} (1-|x|)^{-1-p}\ud x,
\]
which diverges.
\item[(3)] Theorem~\ref{thm72} implies that if $\tilde{f}\in BMO(\Sn)$, then $a_f(x)(1-|x|)^{1+\frac{a}{2}}\leq C$  for all $x \in \B$. Again, this is new growth estimate which does not follow from the Schauder estimates for $H(u)$. Indeed, appealing to such estimates, by~\eqref{est-af-Sch}, we only get that
\[
\a_f(x)(1-|x|)^{1+\frac{a}{n}}\lesssim (1-|x|)^{\frac{a}{2}-1},
\]
which grows unbounded when $|x|$ approaches $1$.
\end{itemize}

\subsection{Case $n>2$} Recall that, by (3.3)-(3.7) in~\cite[Chapter 3]{hkm}, an $\A$-harmonic operator is of type $n$,
if it satisfies the ellipticity-, the continuity-, the monotonicity conditions, and the homogeneity condition of degree $n-1$. The related are the $\A$-harmonic equations ${\rm div} \A(x,\nabla u)=0$ and one of the most profound examples of such equations is the $n$-harmonic one, i.e. ${\rm div} (|\nabla u|^{n-2}\nabla u)=0$, which in the plane reduces to the Laplace equation. For an $\A$-harmonic operator of type $n$, one shows that $f^{\#}\A$, the pull-back of $\A$ under a quasiregular map $f$, is also of type $n$, see Section 14.35 and Lemma 14.38 in~\cite{hkm}. It follows, that given a non-constant $K$-quasiregular mapping $f:\B \to \Rn \setminus\{0\}$, the function $u(x):=\ln|f(x)|$ is an $f^{\#}\A$-harmonic function in $\B$, where $\A$ is the $n$-harmonic operator with the ellipticity parameter $\frac 1K$, and the growth parameter $K$, see~\cite[Lemma 14.19]{hkm}. 

If $f$ satisfies~\eqref{cond-m} and ~\eqref{est-min}, then by Theorem 3.2 in~\cite{ag2} (and by Theorem 4.1 in~\cite{kmv}) the non-tangential limit map $\tilde{f}$ exists a.e. in $\Sn$ and the same holds for $\tilde{u}:=\ln |\tilde{f}|$. Thus, $u$ is an example of the $\A$-harmonic function for which the Fatou theorem holds. This is a non-canonical case for non-linear PDEs, see~\cite{manwe} and~\cite{fabes}.  

In order to illustrate Theorem~\ref{thm72} for $n>2$, let us observe that, since $|\nabla u|\leq \frac{|Df|}{|f|}$, it holds that
\[
\int_{B_x }|\nabla u|^n e^{n u} \leq \int_{B_x }|\nabla u|^n |f|^n\leq \int_{B_x }|Df|^n. 
\]
 
Therefore, the direct computations allow us to rephrase the implication $(1)\Rightarrow (4)$ in Theorem~\ref{thm72} as follows
\[
 \tilde{u}=\ln |\tilde{f}| \in \hbox{BMO}(\Sn) \Rightarrow \int_{B_x }|\nabla u|^n e^{n u} \lesssim \frac{C}{(1-|x|)^a},\quad x\in \B, 0\leq a<n-1.
\]
Moreover, if the multiplicity of $f$ is finite in $\B$, then the above gives a characterization of functions $\tilde{u}\in BMO(\Sn)$ in terms of the growth condition of the integral on the right-hand side.

\begin{appendix}

\section{}\label{appA}

\begin{proof}[Proof of Lemma~\ref{lem:WhitneyCovering}] The proof follows the lines of the corresponding result in Proposition 4.1.15 in~\cite{hkst}. However, since the lemma is a refinement of that result, we present the full discussion.

	For each $x\in\Omega$ we denote by $d(x):=d(x,X\setminus \Omega )$, and for $k\in\N$ define the following collection of hyperbolic balls in $\Om$:
$$
 \mathcal F_k:=\{ B(x,\frac{\eta}{5}d(x)):2^{k-1}\leq d(x)\leq 2^k\}.
$$
	By the $5B$-covering Lemma there exists a disjoint countable subfamily $\mathcal G_k\subset\mathcal F_k$ such that $\mathcal F_k\subset\bigcup_{B\in \mathcal G_k}5B$. Set
$$
 \mathcal W_k:=\{ 5B:B\subset \mathcal G_k\}.
$$
Fix $x_0\in \Omega$ and denote by $B_i=B(x_i,\eta d(x_i))$ the balls in $\mathcal W_k$. 
\smallskip

\emph{Claim:} It holds that 
\begin{equation}\label{eq:Whitney2}
	B(x_i,\eta\tau d(x_i))\subset B(x_1,3\eta\tau d(x_1)).
\end{equation}

In order to prove the claim, suppose that $x_0\in \tau B_i$ for some $i=1,\dots ,M$. Without the loss of generality we may additionally order the indices $i$ so that $d(x_1)\geq d(x_i)$ for all $i=2,\dots ,M$. Then by the triangle inequality
\begin{eqnarray*}
	&d(x_1,x_i)\leq d(x_0,x_1)+d(x_0,x_i)\leq \tau\mathrm{rad}(B_1)+\tau\mathrm{rad}(B_i)=\eta\tau (d(x_1)+d(x_i)),\\
	&d(x_1)\leq d(x_1,x_i)+d(x_i)\leq (\eta\tau +1)  d(x_i)+\eta\tau d(x_1).
	\end{eqnarray*}
Thus
\begin{equation}\label{eq:Whitney1}
	d(x_i)\geq \frac{1-\eta\tau}{1+\eta\tau}d(x_1).
\end{equation}
On the other hand, let $y\in \tau B_i$. Then, again by the triangle inequality, it holds that
\begin{align*}
	d(x_1,y)&\leq d(x_1,x_i)+d(x_i,y)\leq \eta\tau d(x_1)+2\eta\tau d(x_i)\leq 3\eta\tau d(x_1),
\end{align*}
and so the claim is proven.

Next, recall that $\mathcal G_k\subset \mathcal F_k$ so $x_i$ are, in particular, centers of balls in $\mathcal F_k$. Thus, by the definition of the family $\mathcal F_k$, we have that $d(x_i)\geq 2^{k-1}\geq \frac{1}{2}d(x_1)$ for all $i\in\N$. As the consequence, since all the balls $\frac{1}{5}B_i\in \mathcal G_k$ and so $B_i\cap B_j =\emptyset$ for all different $i,j\in\{ 1,\dots ,M\}$, we have
\begin{equation*}
	d(x_i,x_j) \geq \mathrm{rad} (\frac{1}{5}B_i)+\mathrm{rad} (\frac{1}{5}B_j) =\frac{\eta}{5}(d(x_i)+d(x_j)) \geq \frac{\eta}{5}d(x_1).
\end{equation*}
Furthermore, by \eqref{eq:Whitney2} the ball $3\tau B_1$ contains $M$ points $x_1,\dots ,x_M$ which are $\frac{\eta}{5}d(x_1)$-separated. Notice that
$2^{4+\log_2\tau}\frac{\eta}{5}\geq 3\eta\tau$, and thus by \cite[Lemma 4.1.12]{hkst} we obtain $M\leq N^{4+\log_2\tau}$, where $N$ is the Assuad dimension of $X$. 

Consider now the family $\mathcal W:=\bigcup_{k\in\N}\mathcal W_k$ and, as before, assume $x_0,x_1,\dots ,x_M$ are such that $x_0\in B(x_i,\tau\eta d(x_i))$ and $d(x_1)\geq d(x_i)$ for $i=1,\dots ,M$. Let $k_1\in\N$ be such that $B_1\in \mathcal \mathcal W_{k_1}$, and thus we also have that $x_1$ is a center of a ball in $\mathcal F_{k_1}$. By \eqref{eq:Whitney1} we also have $d(x_i)\geq\frac{1-\eta\tau}{1+\eta\tau}d(x_1)\geq 2^{k_1-s-1}$, where $s$ is the biggest integer so that $2^{-s}\leq \frac{1-\eta\tau}{1+\eta\tau}$. Then
$$
x_i\in\bigcup_{m=0}^{s+1}\mathcal F_{k_1-m}\;\mbox{ for all }i=1,\dots ,M.
$$
This yields that $M\leq (s+1)N^{4+\log_2\tau}$, proving the uniformly bounded overlap property.

\end{proof}

\section{}\label{appB}

First, we express the Morrey estimate in terms of averaged derivatives. Then, we prove Lemmas~\ref{lem:shadowmeasure} and~\ref{lem:leavingshadow}, see the paragraph before the statement of Lemma~\ref{lem:shadowmeasure} for the geometric-analytic meaning of the lemma.

\begin{prop}[Morrey's inequality]\label{lem:Morrey}
Let $f:\B\rightarrow\R^n$ be $K$-quasiregular. Let $0<\eta\leq 1$, then for almost every $x\in \B$ and $y\in \frac{\eta}{2}B_x$ we have
$$
|f(x)-f(y)|\leq C(n,K,\eta) (1-|x|)^\beta \a_{f,\eta}(x),
$$
for any $\beta\in [0,1]$.	
\end{prop}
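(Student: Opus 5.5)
We prove Proposition~\ref{lem:Morrey} by reducing it to the classical Morrey--Sobolev embedding for $W^{1,p}$ with $p>n$, combined with the reverse H\"older (Gehring) inequality for $|Df|$. Since $0<1-|x|<1$ and $\beta\in[0,1]$, we have $(1-|x|)\leq (1-|x|)^\beta$. It therefore suffices to prove the case $\beta=1$:
\[
|f(x)-f(y)|\leq C(n,K,\eta)\,(1-|x|)\,\a_{f,\eta}(x).
\]

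Fix $x\in\B$, set $r:=\frac{\eta}{2}(1-|x|)$, and let $B:=\eta B_x=B(x,r)$. Take $y\in\frac{\eta}{2}B_x=\frac12 B$. Both $x$ and $y$ lie in $\frac12 B$, and every point of $\frac12 B$ is at distance at least $\frac{r}{2}$ from $\partial B$.

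By Gehring's lemma (see e.g.~\cite[Theorem 5.1]{bi}), there is $p_0=p_0(n,K)>n$ such that
\[
\Big(\vint_{\frac12 B}|Df|^{p_0}\Big)^{1/p_0}\lesssim_{n,K}\Big(\vint_{B}|Df|^n\Big)^{1/n}.
\]
We apply Morrey's inequality on the ball $\frac12 B$ with exponent $p_0>n$. The function $f$ is continuous, so the estimate holds pointwise:
\[
|f(x)-f(y)|\lesssim_{n,p_0} r\Big(\vint_{\frac12 B}|Df|^{p_0}\Big)^{1/p_0}\lesssim_{n,K} r\Big(\vint_{\eta B_x}|Df|^n\Big)^{1/n}.
\]

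It remains to compare this average with $\a_{f,\eta}$. We have
\[
\Big(\vint_{\eta B_x}|Df|^n\Big)^{1/n}=\Big(\frac{|2B_x|}{|\eta B_x|}\Big)^{1/n}\a_{f,\eta}(x)=\frac{2}{\eta}\,\a_{f,\eta}(x).
\]
Since $r=\frac{\eta}{2}(1-|x|)$, the factors $\frac{\eta}{2}$ and $\frac{2}{\eta}$ cancel. This gives $|f(x)-f(y)|\leq C(n,K)(1-|x|)\,\a_{f,\eta}(x)$. The constant is then even independent of $\eta$; a weaker dependence $C(n,K,\eta)$ arises only if one instead uses a fixed-ratio version of the reverse H\"older inequality.

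The only non-routine ingredient is the reverse H\"older step. It is standard for $K$-quasiregular maps: the gradient satisfies Gehring's lemma, and the ratio $\frac12$ between the two balls is admissible. No multiplicity or Miniowitz assumption is used. This matches the claim being stated for an arbitrary $K$-quasiregular $f$. The phrase "almost every $x$" is harmless, because $f$ is continuous and the argument works for every $x$.
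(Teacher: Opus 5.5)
Your proof is correct and follows the same route as the paper: it combines the Morrey--Sobolev embedding with the Gehring higher-integrability exponent $p_0=p_0(n,K)>n$ on a ball comparable to $\eta B_x$, and then passes to general $\beta$ via $1-|x|\leq(1-|x|)^\beta$. Your explicit cancellation of the factors $\frac{\eta}{2}$ and $\frac{2}{\eta}$, which shows the constant is in fact independent of $\eta$, is a useful extra detail; only your closing remark about a ``fixed-ratio'' reverse H\"older inequality is muddled, since your own argument already uses the fixed ratio $\frac12$.
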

\begin{proof}
 The proof follows immediately from~\cite[Theorem 5.2]{bi}. In particular, the Sobolev embedding argument and the Gehring estimate for quasiregular mappings with the exponent $p=p(n,K)>n$ give the following inequality for $x\in \B$ and $y\in\frac{\eta}{2}B_x$:
 \begin{align*}
  |f(x)-f(y)|&\leq C(n,K) \left(\frac{|x-y|}{\diam (\eta B_x)}\right)^{1-\frac{n}{p(n,K)}}(1-|x|)\left(\frac{1}{|2B_x|}\int_{\eta B_x} |Df|^n\right)^{\frac1n} \\
  &\leq C(n,K,\eta) (1-|x|) \a_{f,\eta}(x).
 \end{align*}
 Since $1-|x|<1$, the assertion follows from the trivial observation that $1-|x|\leq (1-|x|)^\beta$ for any $\beta\in[0,1]$.
\end{proof}

\begin{proof}[Proof of Lemma~\ref{lem:shadowmeasure}]
First let us prove the estimate at $x=0$, namely that
\begin{equation}\label{lem:assert-zero}
 \sigma\left(\left\{ \om\in S^{n-1}:|f(0)-\tilde{f}(\om)|\geq M\mathrm a_{f,\eta} (0)\right\}\right)\leq C(n,K,\eta) (\log M)^{1-n}.\end{equation}
 Let 
 $$
 E:=\left\{ \om\in S^{n-1}:|f(0)-\tilde{f}(\om)|\geq M\mathrm a_{f,\eta} (0)\right\}.
 $$
 For $x\in \frac{\eta}{2} B_0=B(0,\frac{\eta}{4})$ we have, by Proposition~\ref{lem:Morrey}, that $|f(0)-f(x)|\leq C(n,K,\eta) a_{f,\eta}(0)$, and thus $f(B(0,\frac{\eta}{4}))\subset B(f(0),C\mathrm a_{f,\eta}(0))$. Here, for simplicity of the presentation we denote by $C=C(n, K, \eta)$.
		
On the other hand, by the definition of $E$ we have that $f(E)\subset \R^n\setminus B(f(0),Ma_{f,\eta}(0))$. Suppose first that $M>C$ and let
\begin{align*}
&\Gamma_E:=\Gamma (B(0,\frac{\eta}{4}),E,\B ),\\
&\Gamma^\prime_E=f\Gamma_E .
\end{align*}
By definition $\Gamma^\prime_E$ consists of curves joining points in $f(B(0,\frac{\eta}{4}))$ and in $f(E)$. Thus, by the above discussion, $\Gamma^\prime_E$ majorizes the curve family consisting of curves in $\Rn$ with one endpoint in the ball $B(f(0),C\a_{f,\eta}(0))$ and the other one in $\Rn \setminus B(f(0),Ma_{f,\eta}(0))$, in the sense of Definition 6.3 in~\cite{va}. The latter family of curves contains the spherical ring for $M>C$. Therefore, we have the following modulus estimate
$$
 \mathrm{Mod}(\Gamma^\prime_E)\lesssim_n (\log \frac{M}{C})^{1-n}\lesssim_n (\log M)^{1-n}.
$$
The last inequality holds by the price of increasing admissible $M>C$. However, this is not a restriction for our investigations, as in what follows we will apply Lemma~\ref{lem:shadowmeasure} only for $M$ big enough, see the computations for $\|\tilde{f}\|_{L^p(\Sn)}$ at the end of the proof of Proposition~\ref{prop:Lp-norm-bound}.

On the other hand, by the definition of $\Gamma_E$ we also have the following modulus estimate, cf. Remark 7.7 in~\cite{va}:
$$
 \mathrm{Mod} (\Gamma_E)=\sigma (E)(\log\frac{4}{\eta})^{1-n}.
$$
Upon combining the above estimates for $\mathrm{Mod}(\Gamma^\prime_E)$ and $\mathrm{Mod} (\Gamma_E)$, together with the modulus definition of quasiregular map and the bound $N(f,\B )\leq N<\infty$, we arrive at the estimate
\begin{align*}
 \sigma (E)(\log\frac{4}{\eta})^{1-n} = \mathrm{Mod} (\Gamma_E)\leq N\mathrm{Mod}(\Gamma^\prime_E)\lesssim N (\log M)^{1-n}.
\end{align*}
Thus,
\[
\sigma (E)\lesssim N(\log\frac{4}{\eta})^{n-1} (\log M)^{1-n}\approx_{n, N, \eta}(\log M)^{1-n}.
\]
For second case, let us assume that $1<M\leq C$ (we can always increase constant $C$ in Proposition~\ref{lem:Morrey}, if $C<1$). Then, trivially $1 \leq \left(\frac{\log C}{\log M}\right)^{n-1}$ and so
\[
  \sigma(E)=\int_{E} 1\ud \sigma \leq \sigma(\Sn) (\log C)^{n-1}(\log M)^{1-n}\approx_{n,C}(\log M)^{1-n}.
\]
Hence, the assertion~\eqref{lem:assert-zero} is proven for $x=0$.
		
Now let $x\in\B$ and set $g=f\circ T_x^{-1}$, which will still have uniformly bounded multiplicity. Since $g(0)=f(x)$, applying the previous case to $g$ at the origin and $\tilde{\eta}\in (0,1)$, whose value will be determine later in the proof, we obtain 
$$
\sigma\left(\left\{ \om \in S^{n-1}:|f(x)-\tilde{g}(\om)|\geq M\mathrm a_{g,\tilde{\eta}} (0)\right\}\right)\leq C (\log M)^{1-n}.
$$
Moreover, by the definition of $g$ this also reads as
\begin{equation}\label{lem:assert-x}
\sigma\left(T_x\left(\left\{y\in S(x):|f(x)-\tilde{f}(y)|\geq M\mathrm a_{g,\tilde{\eta}} (0)\right\}\right)\right)\leq C (\log M)^{1-n},
\end{equation}
since we directly check that given $\om=T_x(y)$, a point in the set on the left-hand side above, for $y\in S(x)$, we have 
$$
M\mathrm a_{g,\tilde{\eta}} (0) \leq |f(x)-\tilde{f}(y)|=|f(x)-\tilde{f}(T_x^{-1}(\om))|=|f(x)-\tilde{g}(\om)|.
$$
On the other hand, the following relation holds
\[
\mathrm a_{g,\tilde{\eta}} (0) \approx_\eta (1-|x|)\mathrm a_{f,\eta}(x).
\] 
%
In order to see that formula we note that by the change of variables and the definition of a conformal map
\begin{align*}
|2B_0|\, \mathrm a_{g,\tilde{\eta}}^n (0) &= \int_{\tilde{\eta} B_0} |D(f\circ T_x^{-1})(y)|^n \\
&=\int_{\tilde{\eta} B_0} |Df(T_x^{-1}(y))|^n|\,|D T_x^{-1}(y)|^n J_{T_x^{-1}}(y)J_{T_x}(T_x^{-1}(y))\ud y \\
&=\int_{T_x^{-1}(\tilde{\eta} B_0)} |Df(z)|^n|\,|D T_x^{-1}(T_x(z))|^n J_{T_x}(z)\ud z \\
&=\int_{T_x^{-1}(\tilde{\eta} B_0)} |Df(z)|^n \ud z \qquad \text{\scriptsize{$(J_{T_x}(z)=[ J_{T_x^{-1}}(T_x(z)) ]^{-1}=|DT_x^{-1}(T_{x}(z)|^{-n})$.}}
\end{align*}
By Lemma 2.2 in~\cite{ag2} we have that for any $x\in \B$
\[
B(x, c (\eta) (1-|x|)) \subset T_x^{-1} (\tilde{\eta} B_0) \subset B(x, \frac{\eta}{2} (1-|x|)),
\]	
provided that $\tilde{\eta}=\frac{\eta/2-(\eta/2)^2}{(2+\eta/2)^2}$. Moreover, $0<\tilde{\eta}<1$ if and only if $0<\eta<2$. We apply this inclusion relation to the computations for $a_{g,\tilde{\eta}}^n (0)$ and obtain that
\[
\mathrm a_{g,\tilde{\eta}} (0) \leq \left(\frac{|2B_x|}{|2B_0|}\right)^{\frac1n} \left(\frac{1}{|2B_x|}\int_{\eta B_x} |Df(z)|^n \ud z\right)^{\frac1n}=(1-|x|) \mathrm a_{f,\eta}(x).
\]
Hence, by~\eqref{lem:assert-x} we get
\begin{align*}
&\sigma\left(T_x\left(\left\{y\in S(x):|f(x)-\tilde{f}(y)| \geq M(1-|x|)\mathrm a_{f,\eta} (x)\right\}\right)\right) \\
& \leq \sigma\left(T_x\left(\left\{y\in S(x):|f(x)-\tilde{f}(y)|\geq M\mathrm a_{g,\tilde{\eta}} (0)\right\}\right)\right)\leq C (\log M)^{1-n}.
\end{align*}
Finally, recall the following estimate (3) in~\cite{z} for $y,z\in S(x)$, the shadow associated with $x$:
\[
	\frac{1}{9(1-|x|)} |y-z|\leq |T_x(y)-T_x(z)|\leq \frac{2}{1-|x|} |y-z|.
\]
This, together with the observation that $\sigma(S(x)) \approx (1-|x|)^{n-1}$ concludes the proof.
	
\end{proof}

\begin{proof}[Proof of Lemma~\ref{lem:leavingshadow}]
	For a fixed parameter $c>0$ consider $x\in E$, so that $d(S(x),\Sn\setminus U)\simeq_c 1-|x|$, and let also $\tau >1$. Note that this assumption excludes the case $x=0$, as then $d(S(0),\Sn\setminus U)=d(\Sn,\Sn\setminus U)=0$ for any $U\subsetneq \Sn$. Therefore, we define
	\begin{equation}\label{def:y-tau}
	 y_\tau(x) :=\left( |x|-\frac{\tau}{2}(1-|x|)\right) \frac{x}{|x|},
	\end{equation}
	which satisfies 
	\begin{align*}
	&|y_\tau(x) -x|= \left| x- \frac{\tau}{2}\frac{x}{|x|}(1-|x|) -x \right| =\frac{\tau}{2}(1-|x|),\,\hbox{ hence } y_\tau(x)\in \partial(\tau B_x) \hbox{ and moreover},\\
	&\left|\frac{x}{|x|}-y_\tau(x)\right|=\left( 1+\frac{\tau}{2}\right) (1-|x|).
	\end{align*}
 Note that $y_{\tau}(x)$ may belong to $\Rn\setminus \B$.	
On the other hand, since $U$ and $\Sn\setminus S(x)$ ar closed, there exist $w_1\in S(x)$ and $w_2\in \Sn\setminus U$ such that $d(S(x),\Sn\setminus U)=d(w_1,w_2)\simeq_{n} |w_1-w_2|$. In particular, by the property of $x\in E$ we have $|w_1-w_2|\approx_{c, n} 1-|x|$ with $c$ independent of $x$. Recall also that, since $w_1\in S(x)$, it holds that $|w_1-x|\leq (1+\alpha)(1-|x|)$. Therefore,
\begin{align*}
	|y_\tau(x) -w_2|\leq |y_\tau(x) -x|+|x-w_1|+|w_1-w_2|&\leq \left( \frac{\tau}{2}+(1+\alpha)+C\right) (1-|x|)\\
	&= \frac{\frac{\tau}{2}+(1+\alpha)+C}{1+\frac{\tau}{2}}\left|\frac{x}{|x|}-y_\tau(x)\right|.
\end{align*}
Notice that the fraction on the right-hand side above is a decreasing function of $\tau$ and
$$
\lim_{\tau\to\infty }\frac{\frac{\tau}{2}+C+\alpha +1}{\frac{\tau}{2}+1}=1.
$$
Therefore, for sufficiently large $\tau:=\tau_0(\alpha)$ we have $|y_{\tau_0}(x)-w_2|\leq (1+\alpha )\left|\frac{x}{|x|}-y_{\tau_0}(x)\right|$. The following two cases may occur:
\smallskip

\noindent (1) $\left|\frac{x}{|x|}-y_{\tau_0}(x)\right|\leq 1$. Then it holds that $\left|\frac{x}{|x|}-y_{\tau_0}(x)\right|=1-|y_{\tau_0}(x)|$ and, hence, $|y_{\tau_0}(x)-w_2|\leq (1+\alpha)(1-|y_{\tau_0}(x)|)$. This, in particular, means that $w_2\in S(y_\tau(x))$. Furthermore, recall that $w_2\notin U$ as assumed above. We set $y_x:=y_{\tau_0}(x)$ and directly verify that $y_x$ satisfies~\eqref{prop1:lem-shadow} and~\eqref{prop2:lem-shadow}. 
\smallskip

\noindent (2) Otherwise, let $\left|\frac{x}{|x|}-y_{\tau_0}(x)\right|> 1$. Then, upon setting $y_x:=0$, we clearly get~\eqref{prop1:lem-shadow}, since $S(0)=\Sn$. As for~\eqref{prop2:lem-shadow} observe that if $\left|\frac{x}{|x|}-y_{\tau_0}(x)\right|>1$, then by the construction of $y_\tau(x)$ in~\eqref{def:y-tau}, the ball $\overline{\tau_0 B_x}$ contains the origin and this holds regardless whether $y_{\tau_0}(x)\in \B$ or not. In the latter case, i.e. if $y_{\tau_0}(x)\notin \B$, the ball $\overline{\tau_0B_x}$ contains $\B$, again by~\eqref{def:y-tau}. This discussion justifies the choice $y_x:=0$.
\end{proof}
\end{appendix}

\end{document}